\newcommand{\bs}{\boldsymbol}
\newcommand{\beqn}{\begin{equation}}
\newcommand{\eeqn}{\end{equation}}
\newtheorem{corollary}{Corollary}[section]
\newtheorem{theorem}{Theorem}[section]
\def\blackbox{\leavevmode\vrule height 5pt width 4pt depth 0pt\relax}
\newenvironment{proof}{\begin{trivlist}
\item[]\hspace{0cm}{\bf Proof:}
\hspace{0cm} }{\hfill $\blackbox$
\end{trivlist}}
\def\R{{\mathbb R}}
\def\N{{\mathbb N}}
\newcommand\restr[2]{{
\left.\kern-\nulldelimiterspace 
#1 
\right|_{#2} 
}}
\newcommand{\rank}{\mathrm{rank}}
\newcommand{\ns}{N_{\kappa}(\Omega)}
\def\R{{\mathbb R}}
\def\N{{\mathbb N}}
\newcommand{\mK}{\mathsf{K}}
\newcommand{\mA}{\mathsf{A}}
\newcommand{\mI}{\mathsf{I}}
\newlength\fwidth
\definecolor{codegreen}{rgb}{0,0.6,0}
\definecolor{codegray}{rgb}{0.5,0.5,0.5}
\definecolor{codepurple}{rgb}{0.58,0,0.82}
\definecolor{backcolour}{rgb}{0.95,0.95,0.92}
\lstdefinestyle{mystyle}{
  backgroundcolor=\color{backcolour},  
  commentstyle=\color{codegreen},
  keywordstyle=\color{magenta},
  numberstyle=\tiny\color{codegray},
  stringstyle=\color{codepurple},
  basicstyle=\ttfamily\footnotesize,
  breakatwhitespace=false,     
  breaklines=true,         
  captionpos=b,          
  keepspaces=true,         
  numbers=left,          
  numbersep=5pt,         
  showspaces=false,        
  showstringspaces=false,
  showtabs=false,         
  tabsize=2
}
\DeclareMathOperator*{\argmax}{argmax}
\title{Data-driven kernel designs for optimized greedy schemes:\\ A machine learning perspective}
\author[1]{Tizian Wenzel \thanks{tizian.wenzel@mathematik.uni-stuttgart.de, corresponding author}}
\author[2]{Francesco Marchetti \thanks{francesco.marchetti@unipd.it}}
\author[3]{Emma Perracchione \thanks{emma.perracchione@polito.it}}
\affil[1]{Institute for Applied Analysis and Numerical Simulation, University of Stuttgart, Germany}
\affil[2]{Dipartimento di Matematica “Tullio Levi-Civita”, Università di Padova, Italy}
\affil[3]{Dipartimento di Scienze Matematiche Giuseppe Luigi Lagrange, Politecnico di Torino, Italy}
\newif\ifplots
\begin{document}

\maketitle

\begin{abstract}
Thanks to their easy implementation via Radial Basis Functions (RBFs), meshfree kernel methods have been proved to be an effective tool for e.g. scattered data interpolation, PDE collocation, classification and regression tasks.
Their accuracy might depend on a length scale hyperparameter, which is often tuned via cross validation schemes. 
Here we leverage approaches and tools from the machine learning community to introduce two-layered kernel machines, which generalize the classical RBF approaches that rely on a single hyperparameter. 
Indeed, the proposed learning strategy returns a kernel that is optimized not only in the Euclidean directions, but that further incorporates kernel rotations. 
The kernel optimization is shown to be robust by using recently improved calculations of cross validation scores.
Finally, the use of greedy approaches, and specifically of the Vectorial Kernel Orthogonal Greedy Algorithm (VKOGA), allows us to construct an optimized basis that adapts to the data. 
Beyond a rigorous analysis on the convergence of the so-constructed two-Layered (2L)-VKOGA, its benefits are highlighted on both synthesized and real benchmark data sets.
\end{abstract}

\section{Introduction}

Kernel methods \cite{fasshauer2015kernel,wendland2005scattered} are an active field of research due to their wide applicability in several tasks like machine learning, approximation theory or numerical analysis.
In order to increase the accuracy of kernel methods and to avoid instability issues at the same time, 
many researchers already worked on the problem of selecting suitable center points, e.g. via greedy kernel methods, and on the computational issue of finding suitable values for the kernel hyperparameter, namely the shape parameter in Radial Basis Functions (RBFs) literature.

As far as the first item is concerned, assuming that a set of measurements sampled at multivariate scattered points is given, the basic idea of greedy kernel methods \cite{wenzel2022analysis,wirtz2015surrogate} consists in selecting only a smaller subset (called \textit{centers}) out of the large training set of scattered points to build the kernel model.
For this, one starts with an empty set of centers and then adds one more center, step by step, according to some selection criterion \cite{marchi2005near, dutta2021greedy, muller2009komplexitat, schaback2000adaptive}, 
until a predefined amount of centers or some other stopping criterion is met. 
In this context, we will employ the Vectorial Kernel Orthogonal Greedy Algorithm (VKOGA) \cite{santin2019kernel,wirtz2013vectorial}, 
which implements several greedy algorithms in an efficient way. 

Moreover, besides greedy methods, we point out that in the literature there is quite a lot of work on selecting or optimizing a single hyperparameter for RBFs, 
see e.g. \cite[\S 14]{fasshauer2015kernel} or \cite{wendland2005scattered} for a general overview and e.g.\ \cite{driscoll2002interpolation, fornberg2004stable, mccourt2013using} for specific instances that comprise  heuristic approaches, cross-validation methods or schemes based on maximum likelihood estimation.
Such a hyperparameter determines the shape of the basis function, which can be more \emph{picked} or more \emph{flat}. 
This radial scaling works fine as long as the data have similar scales along the Euclidean directions. 
Some improvement in this sense is provided by the so-called anisotropic kernels that need the selection of several scaling parameters, 
precisely as many as the problem dimension (see \cite[\S 3]{fasshauer2015kernel} for a general discussion on the topic). 
Unfortunately, anisotropic kernels in the classical sense do not allow any kernel rotation, and hence we here face the more challenging problem of learning the metric from data, thus allowing the presence of rotations. 
This issue is known as distance metric learning, and several algorithms, as the collapsing classes method, have been developed to solve computational issues e.g.\ in KNN classification (see  \cite{aiolli2014learning, globerson2005metric, goldberger2004neighbourhood, shalev2004online}). 
A different approach to generate anisotropic bases is represented by the so-called variably scaled kernels, 
which demonstrated effectiveness in encoding steep gradients and discontinuities \cite{campi2021learning, marchi2020jumping, Perracchione_2021, rossini2018interpolating}. 

More recent research tries to predict suitable kernel shape parameters with the help of machine learning methods, see e.g.\ \cite{mojarrad2022new}. Another related approach is proposed in \cite{otto2022rffnet}, where random Fourier features are used  for automatic relevance determination of features, however this does not incorporate rotated features. 
A more sophisticated tool, which can be seen as very deep kernels, is provided under the notion of \textit{kernel flows}, see \cite{owhadi2019kernel}. 
There the authors learn a non-parametric family of deep kernels of a given form via incremental data-dependent deformations, thus obtaining very deep (bottomless) kernels. 
Their optimization method is related to ours, however based on a different error criterion.
See also Subsection \ref{subsec:loss_func_for_optim} for some more comments on the relation.
Moreover, \cite{hamzi2021learning} advances by considering cross-validation {criteria} based on maximal Lyapunov exponent or Maximum Mean Discrepancy (MMD). 
In contrast to their approaches, we make use of a two-layered kernel and give a clear interpretation in terms of a hyperparameter optimized kernel instead of using extremely deep (bottomless) kernels. 
Another somehow related topic is the \textit{active subspaces} \cite{constantine2015active,constantine2014active}.
Here the idea is to identify active subspaces, along which the target function changes, and inactive subspaces, along which the target function is invariant. 
However this is done using gradient information, which is usually not available in scattered data approximation.
Furthermore such active subspaces can in principle be also detected with our approach, as shown later in Section \ref{sec:two-layered}.

\noindent In the above setting our main contributions are: 
\begin{enumerate}
\item Developing a two-layered kernel machine for \emph{optimal} data driven kernel designs, i.e. we learn the metric from the samples; 
\item Using such a kernel design to construct a sparse basis via greedy methods.
\end{enumerate}
These two steps allow us to obtain more efficient kernel models for machine learning than previous state-of-the-art greedy methods. For the second step, we employ the VKOGA algorithm \cite{santin2019kernel,wirtz2013vectorial}, 
which implements several greedy algorithms in an efficient way. Despite we here focus on scalar valued outputs instead of vector valued ones, in order to keep the famed acronym VKOGA, 
we denote (with some abuse of notation) our resulting two-layered greedy kernel machine as two-Layered (2L)-VKOGA. 
As far as the first above item is concerned, we go beyond state-of-the art literature by using both kernel scalings and rotations and provide a helpful interpretation based on a Deep Kernel representer theorem as two-layered kernel.
Finally, leveraging recent advancements on the computation of $k$-fold cross validation scores we provide efficient optimization procedures. 
Precisely, the classical hyperparameter is replaced by a more general linear mapping and we prove that this turns out to be equivalent to a two-layered kernel machine.

{We then analyze} both theoretical and computational aspects of the proposed 2L-VKOGA. 
{The convergence analysis shows that we may obtain faster rates of convergence and this depends on the singular values of the optimized first layer linear mapping matrix. 
Numerical evidence stresses that the use of the two-layered optimized kernel possibly significantly outperforms state-of-the art techniques,
and we furthermore provide a practical criterion that allows us to discriminate when and when not we should expect some benefit. Such criterion can be implemented already after the kernel optimization, 
i.e.\ before running the slightly more expensive greedy selection.} Furthermore, we are also able to prove that the complexity of our approach turns out to be way cheaper than a straightforward cross-validation of kernel length scale parameters.

The outline of the paper is as follows. In Section \ref{sec:background} we briefly review the basics of kernel methods and we introduce the main tools needed to develop the 2L-VKOGA scheme. 
Section \ref{sec:two-layered} is entirely focused on the two-layered kernel machines, both from a theoretical and a computationally-oriented viewpoint. 
Numerical experiments with both synthetic and real datasets are carried out in Section \ref{sec:num_experiments}. 
Conclusions and possible further developments are presented in Section \ref{sec:conclusion}.

\section{Background information}
\label{sec:background}

In the following Subsection \ref{subsec:kernel_approx} we review the basics of kernel methods \cite{fasshauer2015kernel, wendland2005scattered}, 
while in the subsequent Subsection \ref{subsec:greedy_kernel} we briefly introduce \textit{greedy} kernel methods \cite{wenzel2022analysis,wirtz2015surrogate}.

\subsection{Kernel approximation}
\label{subsec:kernel_approx}

Let $\Omega \subset \mathbb{R}^d$, $d \geq 1, d \in \mathbb{N}$ be a non-empty set and let us introduce a symmetric kernel function $\kappa: \Omega \times \Omega \longrightarrow \mathbb{R}$. 
For a given set of $N$ scattered data $X_N = \{ \boldsymbol{x}_1, \ldots, \boldsymbol{x}_N \} \subseteq \Omega$, we define the associated kernel matrix $\mK_{N}$ whose entries are given by $(\mK_{N})_{ij}= \kappa(\boldsymbol{x}_i,\boldsymbol{x}_j)$, $i,j=1,\ldots,N$. 
We remark that if the kernel matrix is positive definite for any set of pairwise distinct scattered data, then the kernel is said to be strictly positive definite. 

To each strictly positive definite kernel we can associate a unique Reproducing Kernel Hilbert Space $N_{\kappa} (\Omega)$ (RKHS) equipped with an inner product $\langle \cdot, \cdot \rangle$. 
The RKHS is also known as \textit{native space}, and it contains functions $f:\Omega \longrightarrow \mathbb{R}$ for which $\kappa$ acts as a reproducing kernel, i.e.: 
\begin{itemize}
 \item $\kappa(\cdot, \boldsymbol{x}) \in N_{\kappa}(\Omega)$, $\forall \boldsymbol{x} \in \Omega$,
 \item $f(\boldsymbol{x})=\langle f, \kappa(\cdot,\boldsymbol{x}) \rangle$, $\forall \boldsymbol{x} \in \Omega$, $\forall f \in N_{\kappa}(\Omega)$.
\end{itemize}

Given any set of pairwise distinct interpolation points $X_N \subseteq \Omega$ and an associated set of function values, 
samples of a function $f \in N_{\kappa}(\Omega)$, $F_N = \{f(\boldsymbol{x}_1), \ldots, f(\boldsymbol{x}_N)\}=\{f_1,\ldots,f_N\} \subseteq \mathbb{R}$, 
the well-known kernel representer theorem \cite{kimeldorf1970correspondence,wahba1990spline} 
states that there exists a unique minimum-norm interpolant $s_{X_N} \in N_{\kappa}(\Omega)$ of the form 
\begin{equation} \label{eq:interpolant}
 s_{X_N}(\cdot) = \sum_{i=1}^N \alpha_i \kappa(\cdot,\boldsymbol{x}_i). 
\end{equation}
The coefficients of the kernel-based interpolant are determined by imposing the interpolation conditions $s_{X_N}(\boldsymbol{x}_i)=f_i$ for all $i = 1, \dots, N$, 
thus by solving the linear system 
\begin{equation} \label{eq:system}
 \mK_{N} \boldsymbol{\alpha} = \boldsymbol{f}, 
\end{equation}
where $\boldsymbol{\alpha}=(\alpha_1,\ldots,\alpha_N)^{\intercal}$, and $\boldsymbol{f}=(f_1,\ldots,f_N)^{\intercal}$. \\
A particular class of kernels are so called translational invariant kernels, for which there exists a function $\Phi: \R^d \longrightarrow \R$ such that the kernel can be written as
\begin{align*}
\kappa(\boldsymbol{x}, \boldsymbol{y}) = \Phi(\varepsilon \cdot (\boldsymbol{x}-\boldsymbol{y})),
\end{align*}
whereby we already included a so called \textit{shape} or \textit{length scale} parameter $\varepsilon > 0$. \\
The choice of the shape or length scale parameter $\varepsilon$ from Eq.\ \eqref{eq:rbf_kernels}, 
which affects the concentration of the basis functions around the respective center, is a critical issue. 
Because of the influence of this hyperparameter in the reconstruction process, 
many optimization and searching strategies have been studied for its fine tuning \cite{cavoretto2021search,fornberg2007runge}; 
a review of different techniques is proposed e.g. in \cite[\S 14]{fasshauer2015kernel}. \\
In the following, we will generalize the concept of optimizing the hyperparameter $\varepsilon$, and in doing so, with abuse of notation, we will formally omit the dependence of the kernel on $\varepsilon$.
An important subclass of translational kernels is given by Radial Basis Function (RBF) kernels, for which there exists a univariate radial basis function $\phi: \R_+ \longrightarrow \Omega$, 
which might depend on a positive and real scale parameter $\varepsilon$, such that: 
\begin{align}
\label{eq:rbf_kernels}
(\mK_{N})_{i,j}=\kappa(\boldsymbol{x}_i,\boldsymbol{x}_j)
= \Phi(\varepsilon \cdot (\boldsymbol{x}_i - \boldsymbol{x}_j)) = \phi(\varepsilon \cdot \Vert \boldsymbol{x}_i - \boldsymbol{x}_j \Vert_2)
\end{align}
whereby in general it is also possible to use different distance metrics than the Euclidean one. \\
The function $\Phi(\boldsymbol{x}) \equiv \phi(\Vert \boldsymbol{x} \Vert)$ from Eq.\ \eqref{eq:rbf_kernels} allows to characterize the native space $\ns$ in terms of Sobolev spaces:
Assume that the decay of the Fourier transform of $\Phi: \R^d \longrightarrow \R$ with $\Phi \in L^1(\R^d)$ can be characterized by a decay rate $\tau > d/2$ and constants $c_\Phi, C_\Phi >0$ as
\begin{align}
\label{eq:fourier_decay}
c_\Phi (1+\Vert \boldsymbol{\omega} \Vert_2^2)^{-\tau} \leq \hat{\Phi}(\boldsymbol{\omega}) \leq C_\Phi (1 + \Vert \boldsymbol{\omega} \Vert_2^2)^{-\tau}	\quad \forall \boldsymbol{\omega} \in \R^d.
\end{align}
If additionally the domain $\Omega$ has a Lipschitz boundary,
then the native space $\ns$ can be shown to be norm-equivalent to the Sobolev space $H^\tau(\Omega)$, 
i.e.\ $\ns \asymp H^\tau(\Omega)$ \cite[Corollary 10.48]{wendland2005scattered}.

The interpolant from Eq.\ \eqref{eq:interpolant} can be equivalently defined as the orthogonal projection $\Pi_{V(X_N)}$ of $f$ onto the linear subspace $V(X_N) = \textrm{span} \{\kappa(\cdot, \boldsymbol{x}_i), \boldsymbol{x}_i \in X_N \}$, i.e.,
\begin{equation*} 
  \Pi_{V(X_N)}(f) = \sum_{i=1}^N c_i \kappa(\cdot,\boldsymbol{x}_i). 
\end{equation*}
 
\noindent Classical pointwise error bounds for kernel-based interpolants are of the form %
\begin{equation}\label{eq:bound_pow}
  |f(\boldsymbol{x})-s_{X_N}(\boldsymbol{x})| \leq P_{X_N} \| f - s_{X_N} \|_{N_{\kappa}(\Omega)} \equiv P_{X_N} \| r_N \|_{N_{\kappa}(\Omega)}, \hskip 0.15cm \boldsymbol{x} \in \Omega, \hskip 0.15cm f \in N_{\kappa}(\Omega), 
\end{equation}
where $r_N$ denotes the residual, i.e. $r_N = f- s_{X_N}$, and $P_{X_N}$, known as \emph{power function}, is defined as 
\begin{align}
\label{eq:powerfunc}
  P_{X_N}(\boldsymbol{x}) = \| \kappa(\cdot, \boldsymbol{x})- \Pi_{V(X_N)}(\kappa(\cdot, \boldsymbol{x})) \|_{N_{\kappa}(\Omega)}.
\end{align}
Other error indicators are based on the so-called fill-distance, which is given by
\begin{align}
\label{eq:fill_distance}
 h_{X_N} = h_{\Omega, X_N} = \sup_{ \boldsymbol{x} \in \Omega} \left( \min_{ \boldsymbol{x}_k \in {X_N}} \left\| \boldsymbol{x} - \boldsymbol{x}_k \right\|_2 \right),
\end{align}
and indicates how well $\Omega$ is filled out by data points. 
Then the pointwise error also suffices the following relation
\begin{align}
	|f\left(\boldsymbol{x}\right)-s_{X_N}\left(\boldsymbol{x}\right) | \leq C h^{\tau - d/2}_{X_N} ||f||_{N_{\kappa}(\Omega)}, \quad \boldsymbol{x} \in \Omega,
\label{eq:bound_fill}
\end{align}
for $h_{X_N} \leq h_0$, where $\tau>d/2$ is the rate of the decay of $\hat{\Phi}$ from Eq.\ \eqref{eq:fourier_decay}, i.e.\ depending on the smoothness of the kernel.

As we point out in the next Subsection \ref{subsec:greedy_kernel}, 
such error estimates and in particular Eq.\ \eqref{eq:bound_pow} can be used to define selection criteria for greedy center selection. 

\subsection{Greedy kernel methods} 
\label{subsec:greedy_kernel}

Building the kernel model on all data points $X_N \subseteq \Omega$,
i.e. in the form given by \eqref{eq:interpolant}, might be detrimental in some cases. 
For instance, if the data set is too huge, computing the kernel matrix and solving the linear system is either too costly or even infeasible.
Furthermore, in the case of surrogate modeling \cite{santin2019kernel}, one would like to deal with cheap and quickly evaluable models, thus aiming at small expansion sizes $n \ll N$. 
Due to its small expansion size, the greedy interpolant $s_n$ can be understood as a sparse approximation of $s_{X_N}$. 
An established way for achieve this in the context of surrogate modeling is to select a meaningful subset $X_n \subset X_N$ of the training data $X_N$ via greedy kernel methods \cite{wenzel2022analysis,wirtz2015surrogate}. 
These are iterative schemes that start with an empty set $X_0 = \{ \}$.
Then for $n \geq 1$, at the $n$-th step the set $X_n$ is defined as $X_{n} = X_{n-1} \cup \{\boldsymbol{x}_n\}$ and $\boldsymbol{x}_n$ is so that 
\begin{align*}
\boldsymbol{x}_{n} := \argmax_{\boldsymbol{x} \in X_N \setminus X_{n-1}} \eta^{(n)}(\boldsymbol{x}),
\end{align*}
using some error indicator $\eta^{(n)}: \Omega \longrightarrow \R$. \\
In the kernel literature, the following criteria are frequently used \cite{marchi2005near, muller2009komplexitat, schaback2000adaptive}, 
and they make use of either the residual (see Eq.\ \eqref{eq:bound_pow}) or the power function (see Eq.\ \eqref{eq:powerfunc}) or both:
\begin{enumerate}
\item $P$-greedy: \hspace{7mm} $\eta_P^{(n)}(\boldsymbol{x}) = P_{X_n}(\boldsymbol{x})$,
\item $f$-greedy: \hspace{7.9mm} $\eta_f^{(n)}(\boldsymbol{x}) = |r_n(\boldsymbol{x})|$,
\item $f/P$-greedy: \hspace{3mm} $\eta_{f/P}^{(n)}(\boldsymbol{x}) = |r_n(\boldsymbol{x})|/P_{X_n}(\boldsymbol{x})$.
\end{enumerate}
The convergence rates for the $P$-greedy algorithm were analyzed in \cite{santin2016convergence, wenzel2021novel, wenzel2022stability}.
Based on these works, the $P$-greedy, $f$-greedy and $f/P$-greedy algorithms were recently unified within the scale of so called \textit{$\beta$-greedy algorithms} and also analyzed in terms of their convergence rates \cite{wenzel2022analysis}.
Especially target data dependent algorithms like the $f$-greedy provide a faster rate of convergence, thus usually yielding more accurate (or cheaper) models. 
These faster convergence rates of the $f$-greedy models motivate their use later on in the numerical experiments in Section \ref{sec:num_experiments}. \\
For practical implementation, the algorithms stop as soon as a predefined maximal expansion size is obtained or a predefined accuracy threshold $\eta^{(n)} \leq \tau$ or some stability measure is reached.
An efficient implementation of these greedy kernel algorithms with the selection criteria from above was provided in a matrix free way under the notion VKOGA \cite{santin2019kernel,wirtz2013vectorial}.
For our approach, which will be introduced in the next Section \ref{sec:two-layered}, we build on top of this VKOGA implementation. 

\section{The 2L-VKOGA}
\label{sec:two-layered}

As already mentioned in the Introduction, in order to obtain an efficient and effective kernel model, one can
\begin{enumerate}
\item select suitable center points $\{ \boldsymbol{x}_i \}_{i=1}^n$, for example via greedy kernel methods as elaborated in Subsection \ref{subsec:greedy_kernel},
\item use a suitable kernel $\kappa$ with tuned hyperparameters, as presented in Subsection \ref{subsec:kernel_approx}.
\end{enumerate}
In regards of the first, the best known greedy selection strategies in terms of convergence rates are target-data dependent algorithms, especially the $f$-greedy algorithm \cite{wenzel2022analysis}. 

In the following we do not want to further investigate the greedy selection strategies, but instead focus on optimal kernel design for use in conjunction with a subsequent greedy center selection as implemented in VKOGA.

Precisely, we advance in two ways. First, in Subsection \ref{subsec:hyperparam_optim} we generalize the hyperparameters of Eq.\ \eqref{eq:rbf_kernels} to any arbitrary linear mapping, 
and subsequently show how this setup can be seen as a two-layered kernel machine.
Second, in Subsection \ref{subsec:loss_func_for_optim} we introduce a machine learning inspired strategy for the optimization of the two-layered kernel machines. 
In particular this optimization approach is way more time efficient than cross validating several kernel shape parameters, see also Subsection \ref{subsubsec:computational_complexity}.

Additionally, an analysis of the first layer and a convergence analysis for the greedy selection is discussed in Subsection \ref{subsec:some_analysis}. 
Furthermore, in Subsection \ref{subsec:implementation_2L} we comment on implementation details as well as on the complexity of the introduced 2L-VKOGA.

\subsection{Hyperparameter optimized kernels as two-layered kernel machines} \label{subsec:hyperparam_optim}

As elaborated in Subsection \ref{subsec:kernel_approx}, usually only one single length scale parameter $\varepsilon$ is used within RBF kernels. 
However especially in dimensions $d \gg 1$, different directions within the data might be unequally relevant. 
Furthermore, as those directions do not necessarily need to be aligned with the Euclidean ones, 
it is advisable to also incorporate possible rotations and transformations of the input space into the kernel. 
This can be done by using a $b \times d$ matrix $\mA_{\bs{\theta}}$
\begin{equation*}
  \mA_{\bs{\theta}}=
  \begin{pmatrix}
    \theta_1^1 & \dots & \theta_1^d\\
    \vdots & \ddots & \vdots \\
    \theta_b^1 & \dots & \theta_b^d
  \end{pmatrix},
\end{equation*}
and then considering the kernel 
\begin{equation} 
\label{eq:linear_two_layered_kernel}
  \kappa_{\bs{\theta}}(\boldsymbol{x}, \boldsymbol{y})=\kappa(\mA_{\bs{\theta}} \boldsymbol{x}, \mA_{\bs{\theta}} \boldsymbol{y}).
\end{equation}

For the special choice of $\mA_{\bs{\theta}} = \varepsilon \cdot \mI_d$, where $\mI_d$ is the $d\times d$ identity matrix, we obtain the classical RBF setting, 
while if $\mA_{\bs{\theta}} = {\rm diag}(\varepsilon_1, \ldots, \varepsilon_d)$ we recover the so-called anisotropic kernels. 
As the $b \cdot d$ hyperparameters within the matrix $\mA_{\bs{\theta}}$ can be optimized, 
in the following we generalize the concept of anisotropic kernels by learning the optimal kernel design, i.e., the matrix $\mA_{\boldsymbol{\theta}}$. 
In the following, we will mostly focus on the $d \times d$ case, i.e.\ $b = d$. \\

Now we want to point out that this hyperparameter optimized kernel can be naturally understood as a two-layered kernel according to the deep kernel representer theorem. 
Precisely, according to \cite[Eq.\ (10)]{bohn2019representer}, with slightly modified notation, a deep $L$-layered kernel looks like
\begin{align*}
\mathcal{K}^L(\boldsymbol{x}, \boldsymbol{y}) = K_L(f_{L-1} \circ .. \circ f_1(\boldsymbol{x}), f_{L-1} \circ .. \circ f_1(\boldsymbol{y})),
\end{align*}
with intermediate mappings
\begin{align}
\label{eq:layer_mapping}
f_i(\cdot) = \sum_{j=1}^N \alpha_j^{(i)} K_i(\cdot, f_{i-1} \circ .. \circ f_1(\boldsymbol{x}_j)).
\end{align}
For the special case $L=2$ (thus two-layered), and using a RBF kernel $\kappa$ as outer kernel $K_L$, we obtain
\begin{align}
\label{eq:two_layered_kernel}
\mathcal{K}^2(\boldsymbol{x}, \boldsymbol{y}) = \kappa(f_1(\boldsymbol{x}), f_1(\boldsymbol{y})).
\end{align}
Now we use a linear kernel for the first layer mapping $f_1$ (Eq.\ \eqref{eq:layer_mapping}), 
more precisely we choose a $d \times d$ matrix valued linear kernel as done in \cite[Section 3.2]{wenzel2021universality}: 
\begin{align*}
k_1(\boldsymbol{x}, \boldsymbol{y}) := k_\text{lin}(\boldsymbol{x}, \boldsymbol{y}) \equiv \langle \boldsymbol{x}, \boldsymbol{y} \rangle_{\R^d} \cdot \mI_d.
\end{align*}
Like that we obtain
\begin{align}
\label{eq:kernel_model}
f_1(\cdot) = \sum_{j=1}^N k_\text{lin}(\cdot, \boldsymbol{x}_j) \alpha_j^{(1)}.
\end{align}
Here we can leverage the following Theorem \ref{th:linear_layer} from \cite[Proposition 2]{wenzel2021universality}, which allows us to describe all possible mappings for kernel mappings \eqref{eq:kernel_model}.

\begin{theorem} \label{th:linear_layer}
A linear mapping, i.e.\ a mapping $\R^{d} \longrightarrow \R^d, \boldsymbol{x} \mapsto \mA \boldsymbol{x}$ with $\mA \in \R^{d \times d}$ can be realized as a kernel mapping 
\begin{align*}
s: \R^d \longrightarrow \R^d, 
\boldsymbol{x} \mapsto \sum_{i=1}^N \alpha_i k(\boldsymbol{x}, \boldsymbol{z}_i), ~~ \alpha_i \in \R^d
\end{align*} 
with given centers $\{ \boldsymbol{z}_i \}_{i=1}^N \subset \R^d$ by using a matrix valued linear kernel $k_\text{lin}(\boldsymbol{x}, \boldsymbol{y}) \cdot \mI_b = \langle \boldsymbol{x}, \boldsymbol{y} \rangle_{\R^d} \cdot \mI_d$, 
iff the span of the center points $\boldsymbol{z}_i, i=1, ..., N$ is a superset of the row space of the matrix $\mA$.
\end{theorem}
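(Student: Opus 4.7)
The plan is to reduce the claim to an elementary outer-product decomposition question. First I would unfold the kernel model with the matrix-valued linear kernel: since $k_\text{lin}(\boldsymbol{x},\boldsymbol{z}_i)\cdot \mI_d = \langle \boldsymbol{x},\boldsymbol{z}_i\rangle \mI_d$, any mapping of the claimed form collapses to
\begin{align*}
s(\boldsymbol{x}) \;=\; \sum_{i=1}^N \alpha_i \langle \boldsymbol{x},\boldsymbol{z}_i\rangle \;=\; \Bigl(\sum_{i=1}^N \alpha_i \boldsymbol{z}_i^\top\Bigr) \boldsymbol{x} \;=:\; \mathsf{M}\,\boldsymbol{x}.
\end{align*}
Therefore $s(\boldsymbol{x})=\mA\boldsymbol{x}$ for all $\boldsymbol{x}\in\R^d$ is equivalent to the matrix identity $\mA=\sum_{i=1}^N \alpha_i \boldsymbol{z}_i^\top$, and the theorem becomes: for which $\mA$ can such coefficients $\alpha_i \in \R^d$ be found?

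For the forward (only if) direction, I would observe that every rank-one outer product $\alpha_i \boldsymbol{z}_i^\top$ has row space contained in $\mathrm{span}\{\boldsymbol{z}_i\}$ (its rows are scalar multiples of $\boldsymbol{z}_i^\top$). Consequently the row space of the sum $\mathsf{M}=\sum_i \alpha_i\boldsymbol{z}_i^\top$ is contained in $\mathrm{span}\{\boldsymbol{z}_1,\dots,\boldsymbol{z}_N\}$, and setting $\mathsf{M}=\mA$ forces the row space of $\mA$ to lie in $\mathrm{span}\{\boldsymbol{z}_i\}_{i=1}^N$.

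For the converse, I would assume $\mathrm{row}(\mA)\subseteq \mathrm{span}\{\boldsymbol{z}_i\}$ and expand each row of $\mA$ in this family, writing the $j$-th row as $\sum_{i=1}^N c_{ji}\boldsymbol{z}_i^\top$ for suitable scalars $c_{ji}$ (unique expansion is not needed, only existence). Reassembling $\mA$ row-by-row and swapping the order of summation gives $\mA=\sum_{i=1}^N \alpha_i \boldsymbol{z}_i^\top$ with $\alpha_i := (c_{1i},\dots,c_{di})^\top \in \R^d$, providing an explicit realization.

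The argument is essentially a translation exercise, so there is no real obstacle beyond keeping the row/column conventions straight: the row space of $\mA$ must be interpreted as the span of its rows viewed as vectors in $\R^d$ (equivalently, $\mathrm{range}(\mA^\top)$), and one must remember that the outer product $\alpha\boldsymbol{z}^\top$ has rows parallel to $\boldsymbol{z}^\top$ while its columns are parallel to $\alpha$. Once this bookkeeping is in place, both implications reduce to one-line linear algebra computations.
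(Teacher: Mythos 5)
Your argument is correct and complete: reducing the kernel mapping to the matrix identity $\mA=\sum_{i=1}^N \alpha_i \boldsymbol{z}_i^\intercal$ and then characterizing when such an outer-product decomposition exists via the row space is exactly the right reduction, and both directions are sound. Note that the paper itself does not prove this statement — it imports it verbatim as Proposition 2 of the cited reference on two-layered kernel universality — so there is no in-paper proof to compare against; your outer-product argument is the standard one and matches what that reference does.
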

We remark that if the center matrix $[\boldsymbol{z}_1, ..., \boldsymbol{z}_N] \in \R^{d \times N}$ has rank $d$,
then the span of the center points is always a superset for the row space of any matrix $\mA \in \R^{d \times d}$. 
Then, we can formalize the following corollary.
\begin{corollary}
Consider $\{\boldsymbol{x}_1, ..., \boldsymbol{x}_N\} \subset \R^d$ such that the data matrix $[\boldsymbol{x}_1, ..., \boldsymbol{x}_N] \in \R^{d \times N}$ has rank $d$.
Then the kernel $\kappa_{\bs{\theta}}$ from Eq.\ \eqref{eq:linear_two_layered_kernel} is an instance of a two-layered kernel according to the deep kernel representer \cite[Theorem 1]{bohn2019representer}.
\end{corollary}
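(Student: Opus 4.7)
The plan is to apply Theorem \ref{th:linear_layer} directly with the center set chosen to be the given data points $\{\boldsymbol{x}_1, \ldots, \boldsymbol{x}_N\}$ and the target linear map taken to be $\mA = \mA_{\bs{\theta}}$. First I would observe that the rank-$d$ hypothesis on the data matrix $[\boldsymbol{x}_1, \ldots, \boldsymbol{x}_N] \in \R^{d \times N}$ means that $\Sp\{\boldsymbol{x}_1, \ldots, \boldsymbol{x}_N\} = \R^d$, which is trivially a superset of the row space of any matrix $\mA_{\bs{\theta}} \in \R^{d \times d}$. This is exactly the hypothesis required by Theorem \ref{th:linear_layer}.

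Next, invoking Theorem \ref{th:linear_layer} produces coefficients $\alpha_j^{(1)} \in \R^d$, $j = 1, \ldots, N$, such that the kernel mapping
\begin{equation*}
f_1(\boldsymbol{x}) = \sum_{j=1}^N k_\text{lin}(\boldsymbol{x}, \boldsymbol{x}_j)\, \alpha_j^{(1)} = \sum_{j=1}^N \langle \boldsymbol{x}, \boldsymbol{x}_j \rangle_{\R^d}\, \alpha_j^{(1)}
\end{equation*}
realizes $f_1(\boldsymbol{x}) = \mA_{\bs{\theta}} \boldsymbol{x}$ for all $\boldsymbol{x} \in \R^d$. This places $f_1$ in exactly the form of the first-layer mapping \eqref{eq:layer_mapping} for $L=2$ with the matrix-valued linear kernel $K_1 = k_\text{lin}$ and centers $\boldsymbol{x}_j$.

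Finally, plugging this $f_1$ into the two-layered kernel structure \eqref{eq:two_layered_kernel} with outer RBF kernel $\kappa$ gives
\begin{equation*}
\mathcal{K}^2(\boldsymbol{x}, \boldsymbol{y}) = \kappa(f_1(\boldsymbol{x}), f_1(\boldsymbol{y})) = \kappa(\mA_{\bs{\theta}} \boldsymbol{x}, \mA_{\bs{\theta}} \boldsymbol{y}) = \kappa_{\bs{\theta}}(\boldsymbol{x}, \boldsymbol{y}),
\end{equation*}
by the definition \eqref{eq:linear_two_layered_kernel} of $\kappa_{\bs{\theta}}$. Hence $\kappa_{\bs{\theta}}$ is an instance of a two-layered kernel in the sense of the deep kernel representer theorem, which is what we wanted to show.

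There is essentially no difficult step here: the entire argument is a direct translation of Theorem \ref{th:linear_layer} into the two-layer language. The only mild subtlety worth emphasizing is the role of the rank assumption, which guarantees that the row space of $\mA_{\bs{\theta}}$ can always be covered by $\Sp\{\boldsymbol{x}_j\}$ regardless of the particular $\bs{\theta}$; without this, Theorem \ref{th:linear_layer} could fail for some choices of $\mA_{\bs{\theta}}$ whose row space is not contained in the span of the data. If one wanted to stress generality, one could also remark that a single realization of $f_1$ suffices for all $\boldsymbol{x}, \boldsymbol{y} \in \R^d$ since $f_1$ is constructed as a global linear map on $\R^d$, not merely on the training points.
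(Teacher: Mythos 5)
Your proposal is correct and follows essentially the same route as the paper: invoke Theorem \ref{th:linear_layer} with the data points as centers (the rank-$d$ assumption ensuring the span condition, which the paper states in the remark preceding the corollary) to realize $f_1(\boldsymbol{x}) = \mA_{\bs{\theta}}\boldsymbol{x}$, and then substitute into Eq.\ \eqref{eq:two_layered_kernel}. If anything, you spell out the role of the rank hypothesis slightly more explicitly than the paper's own proof, which is a minor improvement rather than a difference in approach.
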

\begin{proof}
    Using Theorem \ref{th:linear_layer} applied to Eq.\ \eqref{eq:kernel_model}, we obtain 
\begin{align*}
\sum_{j=1}^N k_\text{lin}(\boldsymbol{x}, \boldsymbol{x}_j) \alpha_j^{(1)} = \mA_{\bs{\theta}} \boldsymbol{x}.
\end{align*}
Thus, the general two-layered kernel from Eq.\ \eqref{eq:two_layered_kernel} specializes to
\begin{align*} 
\mathcal{K}^2(\boldsymbol{x}, \boldsymbol{y}) = \kappa(f_1(\boldsymbol{x}), f_1(\boldsymbol{y})) = \kappa(\mA_{\bs{\theta}}\boldsymbol{x}, \mA_{\bs{\theta}}\boldsymbol{y}),
\end{align*}
which is exactly the hyperparameter tunable kernel $\kappa_{\bs{\theta}}$ from Eq.\ \eqref{eq:linear_two_layered_kernel}. 
\end{proof}

The evaluation of the proposed model is visualized in Figure \ref{fig:2L_machine}, making use of common neural network layout structures: our framework includes two hidden layers, and the first one is used to map the input evaluation point $\bs{x}$ via the learned matrix $\mA_{\boldsymbol{\theta}}$.

\begin{figure}[h]
\centering
\includegraphics[width=0.7\linewidth]{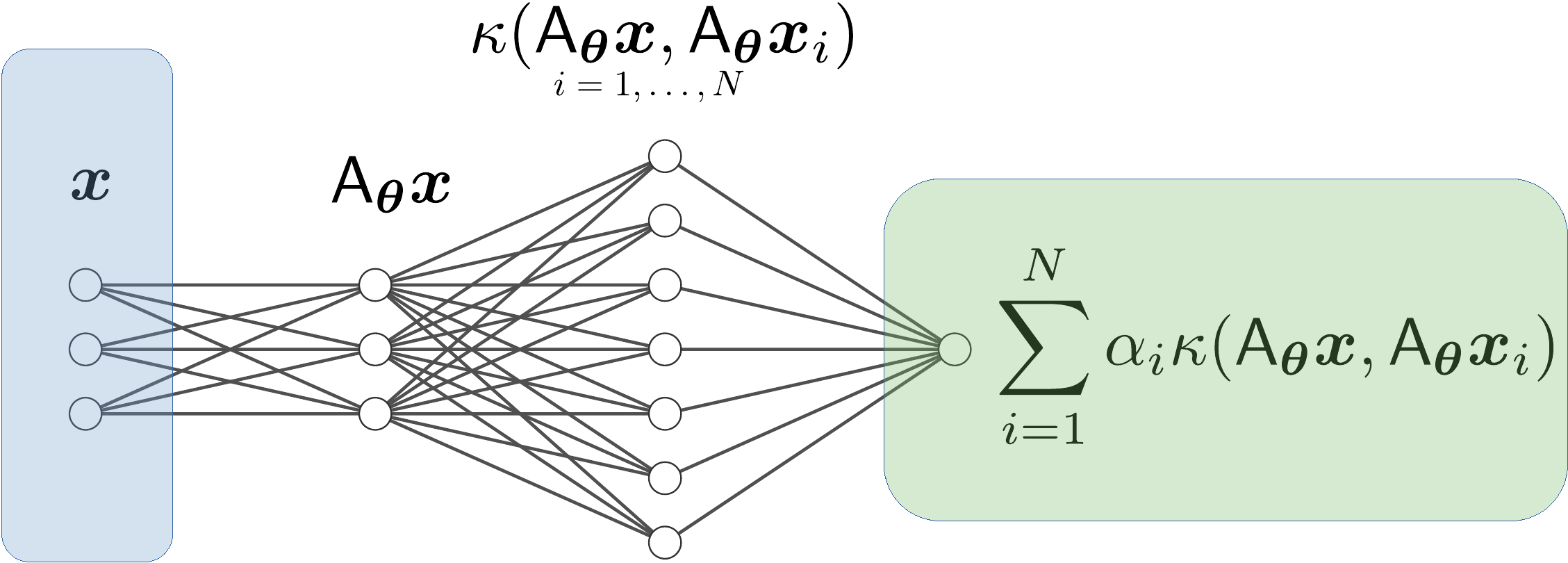}
\caption{Visualisation of the evaluation at $\bs{x}$ of the presented kernel machine. 
The input layer is highlighted in blue, while the output layer is framed in green. 
In this example, $b=d=3$ and $N=7$.}
\label{fig:2L_machine}
\end{figure}

Again we want to emphasize that this two-layered kernel is indeed a generalization of standard shape parameter tuned kernels. 
If we choose $\mA_{\bs{\theta}}$ as a scaled identity matrix, we directly re-obtain the well known shape parameter tuned kernel model because Eq.\ \eqref{eq:linear_two_layered_kernel} boils down to
\begin{align*}
\sum_{j=1}^N \alpha_j^{(2)} \kappa ( \mA_{\bs{\theta}} \boldsymbol{x}, \mA_{\bs{\theta}} \boldsymbol{y}) 
= \sum_{j=1}^N \alpha_j^{(2)} \kappa ( \varepsilon \mI_d \boldsymbol{x}, \varepsilon \mI_d \boldsymbol{y})
= \sum_{j=1}^N \alpha_j^{(2)} \kappa ( \varepsilon \boldsymbol{x}, \varepsilon \boldsymbol{y}).
\end{align*}
This especially also means that the flat limits of kernels (i.e.\ $\varepsilon \rightarrow 0$, see e.g.\ \cite[Chapter 14]{fasshauer2015kernel}) can be realized with our frameworks of two-layered kernels.

\subsection{Theoretical analysis} \label{subsec:some_analysis}

In this Subsection we first of all want to analyze and interpret the meaning of the first kernel layer, allowing to incorporate linear transformations of the input.
Subsequently we provide some preliminary convergence rate analysis for the subsequent greedy selection.

\subsubsection{Analysis of the first layer}
\label{subsubsec:analysis_first_layer}

In order to understand the impact of the first kernel layer on the performance of the overall kernel machine, 
we analyze the matrix $\mA_{\bs{\theta}}$ via its singular value decomposition given by
\begin{align*}
\mA_{\bs{\theta}} = \mathsf{U} \mathsf{\Sigma} \mathsf{V}^\intercal
\end{align*}
with orthogonal matrices $\mathsf{U}, \mathsf{V} \in \R^{d \times d}$ respectively consisting of the left and right singular vectors 
and the diagonal matrix $\mathsf{\Sigma} \in \R^{d \times d}$ with the non-negative singular values on the diagonal.
Denoting the columns of $\mathsf{U}, \mathsf{V}$ by respectively $\boldsymbol{u}_i$ and $\boldsymbol{v}_i, i=1, \dots, d$ we have
\begin{align}
\label{eq:svd_of_A}
\mA_{\bs{\theta}} \mathsf{V} = \mathsf{U} \mathsf{\Sigma} \quad \Leftrightarrow \quad \mA_{\bs{\theta}} \boldsymbol{v}_i = \sigma_i \boldsymbol{u}_i \quad \forall i=1, \dots, d.
\end{align}
For a given input $\boldsymbol{x} \in \R^d$, which can be decomposed as $\boldsymbol{x} = \sum_{i=1}^d \langle \boldsymbol{x}, \boldsymbol{v}_i \rangle_{\R^d} \boldsymbol{v}_i$ we obtain
\begin{align}
\label{eq:svd_calculation}
\mA_{\bs{\theta}} \boldsymbol{x} &= \sum_{i=1}^d \langle \boldsymbol{x}, \boldsymbol{v}_i \rangle_{\R^d} \mA_{\bs{\theta}} \boldsymbol{v}_i = \sum_{i = 1}^d \langle \boldsymbol{x}, \boldsymbol{v}_i \rangle_{\R^d} \sigma_i \boldsymbol{v}_i \notag \\
\Rightarrow \mA_{\bs{\theta}}(\boldsymbol{x} - \tilde{\boldsymbol{x}}) &= \sum_{i=1}^d (\langle \boldsymbol{x}, \boldsymbol{v}_i \rangle_{\R^d} - \langle \tilde{\boldsymbol{x}}, \boldsymbol{v}_i \rangle_{\R^d}) \sigma_i \boldsymbol{v}_i \notag \\
\Rightarrow \Vert \mA_{\bs{\theta}}(\boldsymbol{x} - \tilde{\boldsymbol{x}}) \Vert_{\R^d}^2 &= \sum_{i = 1}^d (\langle \boldsymbol{x}, \boldsymbol{v}_i \rangle_{\R^d} - \langle \tilde{\boldsymbol{x}}, \boldsymbol{v}_i \rangle_{\R^d})^2 \sigma_i^2.
\end{align}
The last line, i.e.\ Eq.\ \eqref{eq:svd_calculation} is of importance, as the RBF kernel $\kappa$ from Eq.\ \eqref{eq:linear_two_layered_kernel} in the second layer of the kernel only requires distances as an input.
Therefore we can see that the matrix $\mathsf{U} \in \R^{d\times d}$ does not matter at all for the 2L-VKOGA model, because it is not seen due to the radiality of the RBF kernel.
In particular it would be possible to set $\mathsf{U} := \mathsf{V}$, such that the matrix $\mA_{\bs{\theta}}$ is even symmetric.
However enforcing symmetry of the matrix $\mA_{\bs{\theta}}$ during the optimization step indeed impedes the performance of the optimization, 
in particular the resulting matrix $\mA_{\bs{\theta}}$ is frequently not as good as if one uses a nonsymmetric optimization.

From Eq.\ \eqref{eq:svd_calculation} we see, that the distance is scaled along the directions provided by the right singular vectors of $\mA_{\bs{\theta}}$ by the corresponding singular value $\sigma_i \geq 0$.
In particular if a singular value $\sigma_i$ is very small or even zero, this means that data along the directions of the corresponding right singular vectors $\boldsymbol{v}_i$ is squeezed or even mapped to the same point.

\subsubsection{Convergence analysis}
\label{subsubsec:conv}

The convergence analysis for (greedy) kernel interpolation is usually done either in the number of (greedily) selected points \cite{santin2016convergence, wenzel2022analysis}, or in terms of the fill distance \cite[Ch.\ 11]{wendland2005scattered}, which was defined in Eq.\ \eqref{eq:fill_distance}.
Both of those approaches focus on the asymptotic rate of the decay of the error.

Given a two-layered kernel $\kappa_{\boldsymbol{\theta}}(\bs{x}, \bs{y}) = \kappa(\mA_{\bs{\theta}} \bs{x}, \mA_{\bs{\theta}} \bs{x})$, 
we can derive convergence rates by considering this kernel as a standard radial basis function kernel acting on the transformed domain $\mA_{\bs{\theta}} \Omega \subset \R^{\rank(\mA)}$.
We will distinguish the two cases $\rank(\mA_{\bs{\theta}}) = d$ and $\rank(\mA_{\bs{\theta}}) < d$.
For the first case, the following Theorem \ref{th:conv_rate_fullrank_A} shows that we obtain at least the same rate of convergence as if one uses the standard kernel $\kappa$ instead of the two-layered kernel $\kappa_{\boldsymbol{\theta}}(\bs{x}, \bs{y})$. 
For the second case, the subsequent Theorem \ref{th:conv_rate_reducedrank_A} shows that the convergence rate is indeed increased, depending on the number of singular values of the matrix $\mA_{\bs{\theta}}$ equal to zero:

\begin{theorem}
\label{th:conv_rate_fullrank_A}
Consider an RBF kernel $\kappa$ that satisfies Eq.\ \eqref{eq:fourier_decay} with $\tau > d/2$ on a bounded Lipschitz domain $\Omega \subset \R^d$.
Consider $f \in \ns$ and the kernel interpolant $s_{X_N}$ using the two-layered kernel $\kappa_{\boldsymbol{\theta}}$ with $\rank(\mA_{\bs{\theta}}) = d$. \\
Then the following pointwise error estimate holds:
\begin{align*}
|f(\bs{x}) - s_{X_N}(\bs{x})| \leq C h_{X_n}^{\tau - d/2}.
\end{align*}
For asymptotically equiddistributed points $h_{X_n} \asymp n^{-1/d}$  it then holds
\begin{align*}
|f(\bs{x}) - s_{X_N}(\bs{x})| \leq C n^{1/2 - \tau/d}.
\end{align*}
\end{theorem}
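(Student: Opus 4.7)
The plan is to reduce the statement to the standard fill-distance bound \eqref{eq:bound_fill} applied on the transformed domain $\tilde{\Omega} := \mA_{\bs{\theta}} \Omega$. Because $\rank(\mA_{\bs{\theta}}) = d$, the matrix is invertible; in particular $\mA_{\bs{\theta}}$ is a bi-Lipschitz diffeomorphism, so $\tilde\Omega$ is again a bounded Lipschitz domain. Let $\sigma_{\min}, \sigma_{\max} > 0$ denote the smallest and largest singular values of $\mA_{\bs{\theta}}$. The point is that on $\tilde\Omega$ the two-layered kernel $\kappa_{\bs\theta}(\bs{x},\bs{y}) = \kappa(\mA_{\bs\theta}\bs{x}, \mA_{\bs\theta}\bs{y})$ is nothing but the pull-back of $\kappa$ by $\mA_{\bs\theta}$, and all relevant quantities (native space, interpolant, fill distance) transform in a controlled way.

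First, I would set $\tilde{\bs{x}} := \mA_{\bs\theta}\bs{x}$, $\tilde{X}_N := \mA_{\bs\theta} X_N$, and $\tilde f(\tilde{\bs{x}}) := f(\mA_{\bs\theta}^{-1}\tilde{\bs{x}})$. One then checks that the pullback by $\mA_{\bs\theta}$ is an isometric isomorphism between $N_{\kappa}(\tilde\Omega)$ and $N_{\kappa_{\bs\theta}}(\Omega)$; in particular $\tilde f \in N_\kappa(\tilde\Omega)$ with $\|\tilde f\|_{N_\kappa(\tilde\Omega)} = \|f\|_{N_{\kappa_{\bs\theta}}(\Omega)}$. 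By the same change of variables, the unique minimum-norm interpolant $\tilde s_{\tilde X_N}$ of $\tilde f$ on $\tilde\Omega$ in $N_\kappa$ satisfies $\tilde s_{\tilde X_N}(\mA_{\bs\theta}\bs{x}) = s_{X_N}(\bs{x})$ where $s_{X_N}$ is the $\kappa_{\bs\theta}$-interpolant of $f$ on $X_N$. So errors correspond exactly:
\begin{equation*}
|f(\bs{x}) - s_{X_N}(\bs{x})| = |\tilde f(\tilde{\bs{x}}) - \tilde s_{\tilde X_N}(\tilde{\bs{x}})|.
\end{equation*}

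Second, I would compare fill distances. By definition
\begin{equation*}
h_{\tilde X_N, \tilde\Omega} = \sup_{\bs{x} \in \Omega} \min_{\bs{x}_k \in X_N} \|\mA_{\bs\theta}(\bs{x}-\bs{x}_k)\|_2 \le \sigma_{\max}\, h_{X_N, \Omega},
\end{equation*}
so $h_{\tilde X_N,\tilde\Omega}$ is of the same order as $h_{X_N,\Omega}$ (up to a constant depending only on $\mA_{\bs\theta}$). Since $\kappa$ satisfies the Fourier decay \eqref{eq:fourier_decay} with rate $\tau > d/2$ on the bounded Lipschitz domain $\tilde\Omega$, the classical sampling inequality \eqref{eq:bound_fill} applies and gives
\begin{equation*}
|\tilde f(\tilde{\bs{x}}) - \tilde s_{\tilde X_N}(\tilde{\bs{x}})| \le C\, h_{\tilde X_N,\tilde\Omega}^{\tau-d/2}\,\|\tilde f\|_{N_\kappa(\tilde\Omega)}
\end{equation*}
for $h_{\tilde X_N,\tilde\Omega} \le h_0$. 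Combining the two previous displays with the identification of errors yields the announced bound $|f(\bs{x}) - s_{X_N}(\bs{x})| \le C h_{X_N}^{\tau-d/2}$, where the constant $C$ now absorbs $\sigma_{\max}^{\tau-d/2}$ and $\|f\|_{N_{\kappa_{\bs\theta}}(\Omega)}$. The asymptotic statement then follows by substituting $h_{X_n} \asymp n^{-1/d}$ into the exponent, giving the rate $n^{1/2 - \tau/d}$.

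The main obstacle I anticipate is the bookkeeping for the pullback of the native space: one has to verify carefully that $N_{\kappa_{\bs\theta}}(\Omega)$ is exactly the image of $N_\kappa(\tilde\Omega)$ under $g \mapsto g\circ \mA_{\bs\theta}$, with matching reproducing kernel and norm. Full rank of $\mA_{\bs\theta}$ is essential here: it guarantees invertibility so that the change-of-variable is genuinely bijective and $\tilde\Omega$ inherits the Lipschitz regularity required by \eqref{eq:bound_fill}. Otherwise the argument is a clean reduction, and the rank-deficient case $\rank(\mA_{\bs\theta}) < d$ will need a different treatment (Theorem~\ref{th:conv_rate_reducedrank_A}), since then $\tilde\Omega$ lives in a lower-dimensional subspace and the ambient dimension in \eqref{eq:bound_fill} effectively drops.
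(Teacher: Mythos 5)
Your proposal is correct and follows essentially the same route as the paper's proof: both reduce to the transformed domain $\mA_{\bs{\theta}}\Omega$, use full rank to get the bi-Lipschitz equivalence $\Vert \cdot \Vert_2 \asymp \Vert \mA_{\bs{\theta}} \cdot \Vert_2$ and hence the equivalence of fill distances, and then invoke the standard bound of Eq.\ \eqref{eq:bound_fill}. The only cosmetic difference is that you phrase the function-space correspondence as an isometric pullback of native spaces, whereas the paper invokes the Sobolev norm equivalence $H^\tau(\Omega) \asymp H^\tau(\mA_{\bs{\theta}}\Omega)$; both deliver the same membership $f \circ \mA_{\bs{\theta}}^{-1}$ in the relevant space on the transformed domain.
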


\begin{proof}
We consider the two-layered kernel $\kappa_{\boldsymbol{\theta}}$ as a standard RBF kernel $\kappa$ applied to the transformed data $\mA_{\bs{\theta}} X_N$ from the transformed domain $\mA_{\bs{\theta}} \Omega$.
As it holds $\rank(\mA_{\bs{\theta}}) = d$, we have $\dim(\mA_{\bs{\theta}} \Omega) = \dim(\Omega) = d$.
By standard Sobolev arguments (as $x \mapsto \mA_{\bs{\theta}} x$ is just a linear full rank transformation) we have $H^\tau(\Omega) \asymp H^\tau(\mA_{\bs{\theta}} \Omega)$, 
in particular $f \circ A_{\boldsymbol{\theta}}^{-1} \in H^\tau(\mA_{\bs{\theta}} \Omega)$.
Furthermore, due to $\rank(\mA_{\bs{\theta}}) = d$ all the singular values of $\mA_{\bs{\theta}}$ are positive, i.e.\ it holds 
$s_{\min}(\mA_{\bs{\theta}}) \Vert x \Vert_2 \leq \Vert \mA_{\bs{\theta}} x \Vert_2 \leq s_{\max}(\mA_{\bs{\theta}}) \Vert x \Vert_2$ or in short $\Vert \cdot \Vert_2 \asymp \Vert \mA_{\bs{\theta}} \cdot \Vert_2$.
Therewith we obtain for the respective fill distances
\begin{align*}
h_{\mA_{\bs{\theta}} \Omega, \mA_{\bs{\theta}} X_N} &\equiv \sup_{\tilde{\bs{x}} \in \mA_{\bs{\theta}} \Omega} \left( \min_{\tilde{\bs{x}}_k \in \mA_{\bs{\theta}} X_k} \Vert \tilde{\bs{x}} - \tilde{\bs{x}}_k \Vert_2 \right) \\
&= \sup_{\bs{x} \in \Omega} \left( \min_{\bs{x}_k \in X_k} \Vert \mA_{\bs{\theta}} \bs{x} - \mA_{\bs{\theta}} \bs{x}_k \Vert_2 \right) \\
&\asymp \sup_{\bs{x} \in \Omega} \left( \min_{\bs{x}_k \in X_k} \Vert \mA_{\bs{\theta}} \bs{x} - \mA_{\bs{\theta}} \bs{x}_k \Vert_2 \right) = h_{\Omega, X_n}.
\end{align*}
Hence we can make use of the error bound of Eq.\ \eqref{eq:bound_fill} to derive the final statement.
\end{proof}

From the previous proof it is obvious to see that we can obtain a faster rate of convergence, as soon as the fill distance decays faster.
An asymptotically faster decay of the fill distance is only possible, if the dimension of the underlying input domain is effectively reduced. 
This is the case iff $\rank(\mA_{\bs{\theta}}) < d$, because then $\dim(\mA_{\bs{\theta}} \Omega) < \dim(\Omega)$. 

In order to avoid technical discussion on radial basis functions and the corresponding Fourier transforms, 
we focus for the following theorem on the class of Matérn kernels, which are also used throughout Section \ref{sec:num_experiments} of numerical experiments.
In their general form they are given as \cite[Section 4.4]{fasshauer2007meshfree}
\begin{align}
\label{eq:matern_kernel}
\Phi(\bs{x}) = \frac{K_{\tau - d/2}(\Vert \bs{x} \Vert) \Vert \bs{x} \Vert^{\tau - d/2}}{2^{\tau - 1} \Gamma(\tau)}, \quad \tau > d/2,
\end{align}
whereby $K_\nu$ is the modified Bessel function of second order.
The corresponding Fourier transform is given as 
\begin{align*}
\hat{\Phi}(\bs{\omega} ) = (1 + \Vert \bs{\omega} \Vert^2)^{-\tau},
\end{align*}
i.e.\ as in Eq.\ \eqref{eq:fourier_decay} but with $c_\Phi = C_\Phi = 1$.

\begin{theorem}
\label{th:conv_rate_reducedrank_A}
Consider a Matérn kernel $\kappa$ of Eq.\ \eqref{eq:matern_kernel} with $\tau > d/2$ on a bounded Lipschitz domain $\Omega \subset \R^d$.
Consider $f \in \ns$ and the kernel interpolant $s_{X_N}$ using the two-layered kernel $\kappa_{\boldsymbol{\theta}}$ such that $\rank(\mA_{\bs{\theta}}) =: d_\text{eff} < d$. \\
Assume that $f \in \ns$ is invariant along the subspace $\mathrm{Null}(\mA_{\bs{\theta}}) \subset \R^d$, i.e.\ $f(\bs{x}) = f(\bs{x}')$ for any $\bs{x}, \bs{x}' \in \Omega$ with $\bs{x} - \bs{x}' \in \mathrm{Null}(A_{\boldsymbol{\theta}})$. For points $X_n \subset \Omega$ such that $h_{\mA_{\boldsymbol{\theta}} \Omega, \mA_{\boldsymbol{\theta}} X_n} \asymp n^{-1/d_\text{eff}}$  it then holds
\begin{align*}
|f(\bs{x}) - s_{X_n}(\bs{x})| \leq C n^{\frac{d}{2d_\text{eff}} - \frac{\tau}{d_\text{eff}}}.
\end{align*}
\end{theorem}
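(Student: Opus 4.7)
The plan is to reduce the statement to a standard kernel interpolation estimate on the image $\mA_{\bs{\theta}}\Omega$, which is a bounded Lipschitz subset of a $d_\text{eff}$-dimensional subspace of $\R^d$. The invariance hypothesis is exactly what is needed to pass to this lower-dimensional picture: the formula $\tilde{f}(\mA_{\bs{\theta}}\bs{x}) := f(\bs{x})$ is well defined on $\mA_{\bs{\theta}}\Omega$, and the two-layered interpolant $s_{X_n}$ for $f$ with centers $X_n$ is identified (via composition with $\mA_{\bs{\theta}}$) with the standard Matérn interpolant of $\tilde{f}$ on the transformed centers $\mA_{\bs{\theta}}X_n \subset \mA_{\bs{\theta}}\Omega$, using the kernel $\kappa$ restricted to the image subspace. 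This mirrors the strategy of the proof of Theorem \ref{th:conv_rate_fullrank_A}, except that now the relevant ambient dimension is $d_\text{eff}$ instead of $d$.

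The decisive technical step is to determine the effective smoothness of the restricted Matérn kernel. Writing $\R^d = V \oplus V^\perp$ with $V := \mathrm{range}(\mA_{\bs{\theta}})$ of dimension $d_\text{eff}$ and exploiting Fubini on the Fourier side, a direct computation (substituting $\bs{\omega}'' = R\bs{u}$ with $R^2 = 1 + \|\bs{\omega}'\|^2$) gives
\begin{align*}
\int_{\R^{d-d_\text{eff}}} (1 + \|\bs{\omega}'\|^2 + \|\bs{\omega}''\|^2)^{-\tau}\, d\bs{\omega}'' = C\, (1 + \|\bs{\omega}'\|^2)^{-(\tau - (d-d_\text{eff})/2)}.
\end{align*}
Thus the restricted kernel on $V \cong \R^{d_\text{eff}}$ is, up to a positive constant, again a Matérn kernel, with effective smoothness $\tau' := \tau - (d - d_\text{eff})/2$. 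The assumption $\tau > d/2$ translates precisely into $\tau' > d_\text{eff}/2$, so the resulting kernel is strictly positive definite and its native space is norm-equivalent to $H^{\tau'}(\mA_{\bs{\theta}}\Omega)$.

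Assembling these ingredients, the invariance of $f$ together with $f \in \ns$ places $\tilde{f}$ in the native space of the restricted kernel on $\mA_{\bs{\theta}}\Omega$. The standard fill-distance bound Eq.\ \eqref{eq:bound_fill} applied in the $d_\text{eff}$-dimensional image domain then gives
\begin{align*}
|f(\bs{x}) - s_{X_n}(\bs{x})| \leq C\, h^{\tau' - d_\text{eff}/2}_{\mA_{\bs{\theta}}\Omega, \mA_{\bs{\theta}}X_n} = C\, h^{\tau - d/2}_{\mA_{\bs{\theta}}\Omega, \mA_{\bs{\theta}}X_n},
\end{align*}
since $\tau' - d_\text{eff}/2 = \tau - d/2$ by construction. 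Substituting the assumed asymptotic $h_{\mA_{\bs{\theta}}\Omega, \mA_{\bs{\theta}}X_n} \asymp n^{-1/d_\text{eff}}$ yields the claimed rate $n^{d/(2 d_\text{eff}) - \tau/d_\text{eff}}$.

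The main obstacle is the Fourier-marginalization step, where the Matérn-specific scaling is essential for producing a closed-form restricted kernel of the same family; a more general positive-definite kernel would instead require a trace/extension theorem on native spaces or additional integrability hypotheses, which is why the theorem is stated specifically for Matérn kernels. A secondary technicality is to cleanly verify that $\tilde{f}$ belongs to $H^{\tau'}(\mA_{\bs{\theta}}\Omega)$, which can be handled by choosing coordinates adapted to $V \oplus V^\perp$ and invoking a Sobolev slicing argument together with the invariance of $f$ along $\mathrm{Null}(\mA_{\bs{\theta}})$.
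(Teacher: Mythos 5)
Your proposal is correct and follows essentially the same route as the paper's proof: pass to the $d_\text{eff}$-dimensional image domain $\mA_{\bs{\theta}}\Omega$ using the invariance of $f$ along $\mathrm{Null}(\mA_{\bs{\theta}})$, identify the restricted Mat\'ern kernel as a Mat\'ern kernel of reduced smoothness $\tau' = \tau - (d-d_\text{eff})/2$, and apply the fill-distance bound with the faster decay $n^{-1/d_\text{eff}}$. The only difference is that you explicitly carry out the Fourier marginalization that justifies the value of $\tau'$ (and note the trace/slicing argument for $\tilde f \in H^{\tau'}$), steps the paper asserts by reference to the Mat\'ern form; your computation is correct and consistent with the paper's claim.
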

Note that it holds $\frac{d}{2d_\text{eff}} - \frac{\tau}{d_\text{eff}} < \frac{1}{2} - \frac{\tau}{d} < 0$ due to $\tau > d/2$, i.e.\ the decay rate is faster than in Theorem \ref{th:conv_rate_fullrank_A}:
\begin{align*}
\frac{d}{2d_\text{eff}} - \frac{\tau}{d_\text{eff}} - \left( \frac{1}{2} - \frac{\tau}{d} \right) = \frac{d(d-d_\text{eff}) - 2\tau(d - d_\text{eff})}{2dd_\text{eff}} = \frac{(d - 2\tau)(d - d_\text{eff})}{2dd_\text{eff}} < 0.
\end{align*}

\begin{proof}
Define $\cal{N} := \mathrm{Null}(\mA_{\bs{\theta}})$ and consider the orthogonal projector $\Pi_{{\cal N}^\perp}: \R^d \longrightarrow {\cal N}^\perp$.
The mapping $\mA_{\bs{\theta}}^{{\cal N}^\perp}: {\cal N}^\perp \longrightarrow \mathrm{R}(\mA_{\bs{\theta}}), \boldsymbol{x} \mapsto \mA_{\bs{\theta}} \boldsymbol{x}$ is now full rank and thus invertible.
Consider $\boldsymbol{x} \in \Omega$ and decompose $\boldsymbol{x} = \boldsymbol{x}_\parallel + \boldsymbol{x}_\perp$ with $\boldsymbol{x}_\parallel \in {\cal N}, \boldsymbol{x}_{\perp} \in {\cal N}^\perp$.
Using the invariance assumption on $f$ along ${\cal N}$, we have
\begin{align}
\label{eq:calculation}
|(f- s_{X_n})(\boldsymbol{x})| &= |f(\boldsymbol{x}_\parallel + \boldsymbol{x}_\perp) - \sum_{j=1}^n \alpha_j^{(n)} \kappa(\mA_{\bs{\theta}} (\boldsymbol{x}_\parallel + \boldsymbol{x}_\perp), \mA_{\bs{\theta}} (\boldsymbol{x}_{j, \parallel} + \boldsymbol{x}_{j, \perp}))| \notag \\
&= |(f(\boldsymbol{x}_\perp) - \sum_{j=1}^n \alpha_j^{(n)} \kappa(\mA_{\bs{\theta}} \boldsymbol{x}_\perp, \mA_{\bs{\theta}} \boldsymbol{x}_{j, \perp})| \notag \\
&= |(f \circ (\mA_{\bs{\theta}}^{{\cal N}^\perp})^{-1})(\mA_{\bs{\theta}}^{{\cal N}^\perp} \boldsymbol{x}_\perp) - \sum_{j=1}^n \alpha_j^{(n)} \kappa(\mA_{\bs{\theta}} \boldsymbol{x}_\perp, \mA_{\bs{\theta}} \boldsymbol{x}_{j, \perp})|.
\end{align}

As $\dim(\mA_{\bs{\theta}} \Omega) \equiv d_\text{eff}$, the native space $N_{\kappa}(\mA_{\bs{\theta}} \Omega)$ is now norm-equivalent to the Sobolev space $H^{\tau'}(\mA_{\bs{\theta}} \Omega)$ of smaller smoothness $\tau' = \tau - \frac{d - d_\text{eff}}{2} < \tau$ see Eq.\ \eqref{eq:matern_kernel}.
Therefore we obtain $f \circ (\mA_{\bs{\theta}}^{{\cal N}^\perp})^{-1} \in H^{\tau'}(\mA_{\bs{\theta}} \Omega) \asymp N_{\kappa}(\mA_{\bs{\theta}} \Omega)$. \\
Furthermore $\sum_{j=1}^n \alpha_j^{(n)} \kappa(\cdot, \mA_{\bs{\theta}} \boldsymbol{x}_{j, \perp}) \in H^{\tau'}(\mA_{\bs{\theta}} \Omega)$ and due to the kernel interpolation condition it holds
\begin{align*}
\sum_{j=1}^n \alpha_j^{(n)} \kappa(\boldsymbol{y}_i, \mA_{\bs{\theta}} \boldsymbol{x}_{j, \perp}) = (f \circ (\mA_{\bs{\theta}}^{{\cal N}^\perp})^{-1})(\boldsymbol{y}_i)
\end{align*}
for all $\boldsymbol{y}_i \in \{\mA_{\bs{\theta}} \boldsymbol{x}_i ~ | ~ \boldsymbol{x}_i \in X_n\}$.
Therefore we can leverage Eq.\ \eqref{eq:bound_fill} to bound the error as
\begin{align*}
|(f \circ (\mA_{\bs{\theta}}^{{\cal N}^\perp})^{-1})(\boldsymbol{y}) - \sum_{j=1}^n \alpha_j^{(n)} \kappa(\boldsymbol{y}, \mA_{\bs{\theta}} \boldsymbol{x}_{j, \perp})| &< C h_{\mA_{\bs{\theta}} \Omega, \mA_{\bs{\theta}} X_n}^{\tau' - d_\text{eff} / 2}, \qquad \boldsymbol{y} \in \{\mA_{\bs{\theta}} \boldsymbol{x} ~ | ~ \boldsymbol{x} \in \Omega \} \\
\Leftrightarrow \quad |(f \circ (\mA_{\bs{\theta}}^{{\cal N}^\perp})^{-1})(\mA_{\bs{\theta}}^{{\cal N}^\perp} \boldsymbol{x}_\perp) - \sum_{j=1}^n \alpha_j^{(n)} \kappa(\mA_{\bs{\theta}} \boldsymbol{x}_\perp, \mA_{\bs{\theta}} \boldsymbol{x}_{j, \perp})| &< C h_{\mA_{\bs{\theta}} \Omega, \mA_{\bs{\theta}} X_n}^{\tau' - d_\text{eff} / 2}, \qquad \boldsymbol{x} \in \Omega,
\end{align*}
such than in conjunction with Eq.\ \eqref{eq:calculation} we obtain
\begin{align*}
|(f - s_{X_n})(\boldsymbol{x})| \leq C h_{\mA_{\bs{\theta}} \Omega, \mA_{\bs{\theta}} X_n}^{\tau' - d_\text{eff} / 2}, \qquad \boldsymbol{x} \in \Omega.
\end{align*}
Using finally $h_{\mA_{\boldsymbol{\theta}} \Omega, \mA_{\boldsymbol{\theta}} X_n} \asymp n^{-1/d_\text{eff}}$ which is possible due to $\dim(\mA_{\bs{\theta}} \Omega) \equiv d_\text{eff}$ we obtain the desired statement:
\begin{align*}
-\frac{1}{d_\text{eff}} \cdot \left( \tau' - d_\text{eff} / 2 \right) = -\frac{\tau - \frac{d - d_\text{eff}}{2}}{d_\text{eff}} + \frac{1}{2} = -\frac{\tau}{d_\text{eff}} + \frac{d}{2d_\text{eff}}.
\end{align*}
\end{proof}

Theorem \ref{th:conv_rate_reducedrank_A} can be leveraged in the following way. 
Given $f \in \ns$, which is invariant in some directions, then the two-layered kernel $\kappa_{\boldsymbol{\theta}}$ can be chosen such that the null space $\mathrm{Null}(\mA_{\bs{\theta}})$ of the matrix $\mA_{\bs{\theta}}$ coincides with this invariant subspace.
Then, as $\rank(\mA_{\bs{\theta}}) \equiv d_\text{eff} < d$, the fill distance $h_{\mA_{\boldsymbol{\theta}} \Omega, \mA_{\boldsymbol{\theta}} X_n}$ can decay as $h_{\mA_{\boldsymbol{\theta}} \Omega, \mA_{\boldsymbol{\theta}} X_n} \asymp n^{-1/d_\text{eff}}$ for suitable chosen points $X_n$ (namely such that $\mA_{\boldsymbol{\theta}} X_n$ is asymptotically equiditsant within  $\mA_{\boldsymbol{\theta}} \Omega$).
As remarked above, this provides a faster convergence rate, thus providing a benefit of using the two-layered kernel. 
Section \ref{sec:num_experiments} shows that our used optimization approach is indeed capable of (approximately) finding those invariant directions, i.e.\ inactive subspaces of the considered function $f \in \ns$.

In order to avoid too many technical details, we do not show convergence results for the $f$-greedy algorithm using the two-layered kernel $\kappa_{\boldsymbol{\theta}}(\bs{x}, \bs{y})$, albeit we use it later on in Section \ref{sec:num_experiments}.
However we remark that roughly speaking the convergence analysis of the $f$-greedy algorithm from \cite{wenzel2022analysis} is based on convergence rates for the $P$-greedy algorithm, 
i.e.\ those rates from Theorem \ref{th:conv_rate_fullrank_A} and \ref{th:conv_rate_reducedrank_A}.
The analysis from \cite{wenzel2022analysis} shows an additional convergence rate of $\log(n) n^{-1/2}$ for the $f$-greedy algorithm by making use of its target data dependent selection criterion.
Therefore we expect that the $f$-greedy algorithm provides improved convergence rates as in Theorem \ref{th:conv_rate_fullrank_A} respectively \ref{th:conv_rate_reducedrank_A}, namely by the additional factor of $\log(n) n^{-1/2}$.

As it can also be seen in the numerical experiments in Section \ref{sec:num_experiments}, we want to point out that our 2L-VKOGA approach does not only necessarily provide benefits in terms of the asymptotic convergence rate, but also in terms of the preasymptotic factor, especially if singular values of the final optimized matrix $\mA_{\bs{\theta}}$ are not exactly zero, but close to.
A more detailed analysis of this preasymptotic regime is left for future research.

\subsection{Loss function for optimization} \label{subsec:loss_func_for_optim}

Learning the kernel $\kappa_{\bs{\theta}}$ is equivalent to \textit{optimizing} the matrix $\mA_{\boldsymbol{\theta}}$, 
thus $\kappa_{\bs{\theta}}$ is a parametric model that depends on $b \times d$ parameters belonging to some space $\Theta\subseteq \mathbb{R}^{b\times d}$. 
We aim at optimizing the kernel by minimizing a loss function that depends on the input and target data:
\begin{align} \label{eq:loss1}
  \min_{\mA_{\bs{\theta}}\in\Theta}\ell(X_{N}, F_N, \mA_{\bs{\theta}}).
\end{align}
For the actual optimization we will make use of well known optimization strategies from the machine learning community \cite{goodfellow2016deep},
in particular gradient based optimization and the use of mini-batches; more details on the optimization and implementation will be given in the next Subsection \ref{subsec:implementation_2L}. 
In the following, we will focus on the structure of the loss function and on its use in conjunction with the optimization via mini-batches.

The loss consists in the cross validation (CV) error, which is an established criterion in the scientific community to assess the effectiveness of a model \cite{golub1979generalized}. 
However, instead of directly evaluating the CV error on $X_N$ with respect to $F_N$ using the kernel $\kappa_{\bs{\theta}}$, we evaluate the $k$-fold cross validation error on so-called \textit{mini-batches}. 
This is necessary and even beneficial, as the evaluation of $k$-fold cross validation scores is time consuming on large datasets and full batch learning is known to be detrimental for generalization in machine learning tasks \cite{goodfellow2016deep}.
Mini-batches are randomly drawn subsets $(X_\text{batch}, F_\text{batch})$ of input data with corresponding target values of size $n_\text{batch} \ll N$ from the large data set $X_N$ with corresponding target data $F_N$. 
A commonly used value, which is later on employed in the numerical experiments, is $n_\text{batch} = 64$. 

As mentioned in the introduction, this strategy is related to the optimization approaches in \cite{hamzi2021learning, owhadi2019kernel}, where also mini-batches are used.
However there the optimization criterion was based on the premise that a kernel must be good, \lq\lq if the number of points used to interpolate the data can be halved without significant loss in accuracy\rq\rq, see \cite[Eq.\ (6)]{hamzi2021learning}. 
In contrast we use for every mini-batch the $k$-fold cross validation error by taking advantage of an efficient implementation proposed by Rippa for the case $k=n_\text{batch}$ (Leave-One-Out CV (LOOCV)) \cite{rippa1999algorithm} and then extended to the general $1< k < n_\text{batch}$ framework in \cite{marchetti2021extension}. 
In the following, we briefly outline this strategy to which we refer to as Extended Rippa's Algorithm (ERA).

Let $k \in \N$, $1 < k \leq n_\text{batch}$, be the number of folds used for the $k$-fold validation scheme and suppose, for simplicity, that $p = n_\text{batch}/k \in \mathbb{N}$. 
Then, for each fold, let us split the minibatch $X_\text{batch}$ into a training set $T_{n_\text{batch}-p}$ of cardinality $n_\text{batch}-p$ and validation set $V_{p}$, 
so that $X_{n_\text{batch}}=T_{n_\text{batch}-p} \cup V_{p}$ and $T_{n_\text{batch}-p} \cap V_{p} = \emptyset$. 
Let us denote by $\boldsymbol{r} = (r_1,\ldots, r_p)^{\intercal}$, $r_i \in \{1,\ldots, n_\text{batch}\}$ the vector of distinct validation indices for a given fold, 
i.e. $V_p=\{\boldsymbol{x}_{r_i}, i=1,\ldots,p\}$. 
Then, we are interested in computing the residual vector $\boldsymbol{e}_{\boldsymbol{r}}=(e_1,\ldots,e_{p})$ whose components are $e_i=|s_{T_{n_\text{batch}-p}}(\boldsymbol{x}_{r_i})-f(\boldsymbol{x}_{r_i})|$, 
$\boldsymbol{x}_{r_i} \in V_p$, being $s_{T_{n_\text{batch}-p}}$ the interpolant constructed upon the training set $T_{n_\text{batch}-p}$. 
A standard application of the $k$-fold CV scheme would require the inversion of $k$ different $(n_\text{batch}-p) \times (n_\text{batch}-p)$ linear systems of the form \eqref{eq:system}, 
leading to a complexity cost of about ${\cal O}(n_\text{batch}^3 k)$. Fortunately, letting
$$
  \mK^{\boldsymbol{\theta}}_{n_\text{batch}}=
  \begin{pmatrix}
  \kappa_{\boldsymbol{\theta}}(\bs{x}_1,\bs{x}_1) & \dots & \kappa_{\boldsymbol{\theta}}(\bs{x}_1,\bs{x}_{n_\text{batch}})\\
  \vdots & \ddots & \vdots \\
  \kappa_{\boldsymbol{\theta}}(\bs{x}_{n_\text{batch}},\bs{x}_1) & \dots & \kappa_{\boldsymbol{\theta}}(\bs{x}_{n_\text{batch}},\bs{x}_{n_\text{batch}})
  \end{pmatrix},
$$
in \cite{marchetti2021extension}, 
the author proved that $\boldsymbol{e}_{\boldsymbol{r}}$ is the unique solution of the linear system
\begin{equation}\label{eq:rippa_loss}
(\mK^{\boldsymbol{\theta}}_{n_\text{batch}})^{-1}_{V_{p}} \boldsymbol{e}_{\boldsymbol{r}} = \boldsymbol{c}_{\boldsymbol{r}},
\end{equation}
where $(\mK^{\boldsymbol{\theta}}_{n_\text{batch}})^{-1}_{V_{p}}$ is the submatrix of the $n_\text{batch}\times n_\text{batch}$ inverse kernel matrix $(\mK^{\boldsymbol{\theta}}_{n_\text{batch}})^{-1}$, 
built on $\kappa_{\bs{\theta}}$, which is defined by restricting to the validation indices, 
and $\boldsymbol{c}_{\boldsymbol{r}}$ are the components of the solution vector from Eq.\ \eqref{eq:system} for which $i \in \boldsymbol{r}$. 
Hence, such an implementation requires a total complexity cost which is about ${\cal O}(n_\text{batch}^3)+ {\cal O} (n_\text{batch}^3/k^2)$ for the construction of the \textit{complete} error vector $\boldsymbol{e}=\ell(X_{N},F_N,\kappa_{\bs{\theta}})$,
which includes the validation errors computed on each element of the batch.
Finally, we point out that any norm of the vector $\boldsymbol{e}$ can be used to have an a priori error estimate, because the Rippa's scheme is independent of the used norm in the end. 
In the following we will consider the squared two-norm, and we will apply Tikhonov regularization with a parameter $\lambda$ to Eq.\ \eqref{eq:rippa_loss}. \\
Thus finally we are optimizing the kernel $\kappa_{\bs{\theta}}$ by minimizing the following loss function, which is a refinement of Eq.\ \eqref{eq:loss1}:
\begin{align} \label{eq:loss2}
  \min_{\mA_{\bs{\theta}}\in\Theta} \ell_\lambda(X_{N}, F_N, \mA_{\bs{\theta}}, k).
\end{align}
The ERA may be further speeded up by allowing a stochastic approximation, which however affects the exactness of the scheme \cite{ling2022stochastic}. 
While ERA, as the Rippa's scheme, was originally designed for the tuning of the shape parameter, 
we employ it as a more general \textit{error indicator} as recently done in \cite{cavoretto2022adaptive, marchetti2022efficient}. 
In the next subsection we provide more details on the optimization and implementation. 

\subsection{Optimization, implementation and computational complexity}
\label{subsec:implementation_2L}

This section is devoted to the optimization and implementation of the two layered kernel and the investigation of its computational complexity.

\subsubsection{Optimization and implementation}

In Subsection \ref{subsec:loss_func_for_optim} we elaborated on the (family of) loss functions which we employ for the optimization of the kernel $\kappa_{\bs{\theta}}$.
Here we give some more details on the actual optimization procedure. 
Based on the computed loss values, we employ an iterative gradient based optimization using the adaptive \textit{Adam optimizer} \cite{kingma2014adam} (instead of plain stochastic gradient descent (SGD)) and \textit{early stopping}\cite{goodfellow2016deep} on the accumulated loss values during one epoch.
The implementation is done in Python leveraging the deep learning framework {\tt pytorch} \cite{paszke2019pytorch} and especially making use of the auto-differentiation for computing the gradients.
The code is implemented as an extension of the VKOGA software package, which is described in \cite{santin2019kernel}.

The algorithm used for the optimization of the two-layered kernel is depicted in Algorithm \ref{alg:kernel_optimization} for the case of plain stochastic gradient descent.
For the use of Adam optimizer, the weights are more sophistically updated and we waived to delve into details concerning this.
The crucial step of the optimization, namely the computation of the gradients of the loss with respect to the matrix $\mA_{\bs{\theta}}$, is conveniently handled by pytorch via automatic differentiation.
To improve the numerical stability of the algorithm, a Tikhonov regularization is added to the kernel matrix as elaborated in Subsection \ref{subsec:loss_func_for_optim}. 
The early stopping criterion in line 11 stops the optimization if the loss does not decay further, thus avoiding unnecessary further optimization steps. 

\SetKwComment{Comment}{/* }{ */}

\RestyleAlgo{ruled}
\begin{algorithm}[hbt!]
\caption{SGD optimization of the kernel $\kappa_{\bs{\theta}} \equiv \kappa(\mA_{\bs{\theta}} \boldsymbol{x}, \mA_{\bs{\theta}} \boldsymbol{y})$.}\label{alg:kernel_optimization}
\SetKwInOut{Input}{Input}
\SetKwInOut{Output}{Output}
\Input{Data $(X_N, F_N)$, base kernel $\kappa$, $k$-fold parameter $k$, learning rate $\mu$, regularization parameter $\lambda$}
\KwResult{Optimized matrix $\mA_{\bs{\theta}}$}
$\mA_{\bs{\theta}} \gets \textrm{diag}(1, ..., 1)$ \Comment*[r]{Initialization of $\mA_{\bs{\theta}}$}
~ \\
\For{$n_\text{epoch} = 1, ..., \max_\text{epoch}$}{
	$L_\mathrm{epoch} \gets 0$\;
	\textsc{shuffle} $(X_N, F_N)$\;

	\For{$n_\text{iter} = 1, ..., \max_\text{iter}$}{
	($X_\text{batch}, F_\text{batch}) \gets \textsc{get}\_\textsc{batch}((X_N, F_N))$\;
	$L = \ell_\lambda(X_\text{batch}, F_\text{batch}, \mA_{\bs{\theta}}, k)$	 \Comment*[r]{Using Eq.\ \eqref{eq:loss2}}
	
	$\mA_{\bs{\theta}} \gets \mA_{\bs{\theta}} - \mu \cdot \frac{\partial L}{\partial \mA_{\bs{\theta}}}$ \Comment*[r]{Gradient descent update}
	
	$L_\mathrm{epoch} \gets L_\mathrm{epoch} + L$
	}
	~ \\
	\textsc{early}\_\textsc{stopping}($L_\mathrm{epoch}$)
}
\end{algorithm}

\subsubsection{Computational complexity}
\label{subsubsec:computational_complexity}

In this section, we briefly analyze the complexity of our proposed 2L-VKOGA approach against a standard cross validation approach.
Furthermore we comment on the speed up of using Rippa's and extended Rippa's scheme as described in Subsection \ref{subsec:loss_func_for_optim} for the kernel optimization:

As our 2L-VKOGA approach is based on a gradient descent optimization using up to $\text{max}_\text{epoch}$ epochs with each $N / n_\text{batch}$ iterations using small $n_\mathrm{batch} \times n_\text{batch}$, it requires 
\begin{align*}
{\cal O} (n_\text{epoch}  \cdot N / n_\text{batch} \cdot n_\text{batch}^3 ) = {\cal O} (n_\text{epoch}  \cdot N \cdot n_\text{batch}^2),
\end{align*}
operations for the calculation of cross validation errors on small matrices. 
The subsequent run of VKOGA is typically of order ${\cal O} (n_\text{vkoga}^2 \cdot N)$, 
thus the overall complexity of the 2L-VKOGA is
\begin{align*}
{\cal O} (n_\text{epoch}  \cdot N \cdot n_\text{batch}^2) + {\cal O} (n_\text{vkoga}^2 \cdot N).
\end{align*}
For typical values of $n_\text{batch}, n_\text{epoch}, n_\text{vkoga}$ such as $n_\text{batch} = 64, n_\text{epoch} = 10, n_\text{vkoga} = 1000$, which were used for the numerical experiments in Section \ref{sec:num_experiments}, it holds $n_\text{epoch}  \cdot N \cdot n_\text{batch}^2 < n_\text{vkoga}^2 \cdot N$, 
i.e.\ there is only a small computational overhead for the kernel optimization before running VKOGA. 

This is in contrast to a straightforward shape parameter cross validation using VKOGA. Indeed, full cross validation for the matrix $\mA_{\boldsymbol{\theta}}$ is infeasible, because the number of cross validation runs scales exponentially in the number of parameters, 
i.e.\ ${n_\text{CV}}^{d^2}$ (curse of dimensionality).
Also, a full cross validation for a classical anisotropic kernel, i.e.\ just for the diagonal of $\mA_{\boldsymbol{\theta}}$, is infeasible, as this requires $n_\text{CV}^{d^2}$ runs.
Thus, a cross validation of $n_\text{CV}$ shape parameters takes the effort
\begin{align*}
{\cal O} (n_\text{CV} \cdot n_\text{vkoga}^2 \cdot N),
\end{align*}
which elucidates the additional factor $n_\text{CV}$.
Hence we conclude that our approach is favorable and cheaper than the classical CV implementation, in particular for large $N$. 

Especially for large space dimension $d$ only a cross validated shape parameter is likely inferior to a whole optimized matrix (in terms of the explored parameter space $\Theta \subset \R^{d \times d}$).

\section{Numerical experiments}
\label{sec:num_experiments}

In this section we provide three different kinds of numerical experiments. 
First, in Subsection \ref{subsec:num_ex_func_approx} we use our two-layered approach for the efficient approximation of given functions.
Second, in Subsection \ref{subsec:num_ex_ml_data} we show the applicability of two-layered kernels for sparse surrogate modeling on real world machine learning datasets.
Finally, in Subsection \ref{subsec:num_stability} we investigate the optimization of the first layer, i.e.\ the matrix $\mA_{\boldsymbol{\theta}}$, with help of the extended Rippa's formula.

\subsection{Function approximation on the unit cube} \label{subsec:num_ex_func_approx}

As a first class of example we highlight the benefits of the proposed 2L-VKOGA for function approximation.
For this, we picked functions where different behaviours can be seen. 
We consider the unit cube $\Omega = [0, 1]^d$ for $d=5, 6, 7$, discretized with $N=50000$ uniformly randomly selected points. The corresponding target data is given as the evaluation of the function
\begin{align*}
  f_d(\boldsymbol{x}) = \left\{
\begin{array}{ll}
{\rm e}^{-4 \left( \sum_{j=1}^5 \boldsymbol{x}_i - 0.5 \right)^2}, & \textrm{for $d=5$}, \\
{\rm e}^{-4 \sum_{j=1}^5 (\boldsymbol{x}_i - 0.5)^2} + 2 |\boldsymbol{x}_1 - 0.5|, & \textrm{for $d=6$}, \\
{\rm e}^{-\sum_{j=1}^7 (\boldsymbol{x}_i - 0.5)^2} + {\rm e}^{-9 \sum_{j=1}^2 (\boldsymbol{x}_i - 0.3)^2}, & \, \textrm{for $d=7$}. 
\end{array}
\right.
\end{align*}
While the first function $f_5$ clearly possesses an active subspace along the direction $(1, 1, 1, 1, 1)^\intercal \in \R^5$, this does not hold for $f_6$ and $f_7$. 
However for $f_6$ the  kink exists only in the $x_1$ direction, while for $f_7$ the second bump depends only on the first two variables.

For the approximation we use as a base kernel the Matérn kernel $\kappa(\boldsymbol{x}, \boldsymbol{y})=\exp(-\Vert \boldsymbol{x} - \boldsymbol{y} \Vert / \sqrt{d})$, but we remark that the results are qualitatively the same when using other Matérn kernels. 

We compare the approximation given by the $f$-greedy (up to $250$ centers) of the 2L-VKOGA  introduced in Section \ref{sec:two-layered} with a standard hyperparameter tuned kernel method.
\begin{itemize}
\item The two-layered kernel is optimized for 25 epochs using a batch size of 64 and the Adam optimizer with an initial learning rate of $5 \cdot 10^{-3}$. 
A regularization of $10^{-5}$ was added to stabilize the numerical calculation of Rippa's formula in Eq.\ \eqref{eq:rippa_loss}.
\item The standard Matérn kernel was used with $10$ logarithmically equally spaced shape parameters $\varepsilon$ between $0.05$ and $10$, i.e.\ $\kappa(\boldsymbol{x}, \boldsymbol{y})=\exp(-\varepsilon \cdot \Vert \boldsymbol{x} - \boldsymbol{y} \Vert / \sqrt{d})$.
\end{itemize}
The results are visualized in Figure \ref{fig:optimized_vs_standard_kernel} and furthermore listed in Table \ref{tab:tabular_results}.

In the left column of Figure \ref{fig:optimized_vs_standard_kernel} we note that the 2L-VKOGA performs better than any of the hyperparameter tuned kernels. 
There seem to be two cases:
\begin{itemize}
\item For $f_5$ (top), the convergence rate of the two-layered kernel is significantly faster than the convergence rates of the standard kernels. Just in the beginning the convergence rate seems to decrease. 
The faster convergence rate can be explained by the final optimized matrix $\mA_{\boldsymbol{\theta}}$, which has eigenvalues
\begin{align*}
\lambda_1 = 2.6623, \lambda_2 = 1.5540 \cdot 10^{-2}, \lambda_3 = 1.8592 \cdot 10^{-3}, \\ 
\lambda_4 = 2.4576 \cdot 10^{-4}, \lambda_5 = -4.1398 \cdot 10^{-3},
\end{align*}
i.e.\ one major eigenvalue and 4 more eigenvalues which are significantly smaller. 
The eigenvector associated to the largest eigenvalue is given by 
\begin{align*}
\boldsymbol{v}_{\lambda_1} = (0.4453, 0.4418, 0.4364, 0.4461, 0.4480)^\intercal,
\end{align*}
which is close to (a multiple) of $(1, 1, 1, 1, 1)^\intercal$ and thus quite well aligned with the active subspace direction of $f_5$.
\item For both $f_6$ and $f_7$ the convergence rate seems to be the same, however the prefactor is smaller, i.e.\ the two-layered approach is consistently better by some factor.
Neither $f_6$ nor $f_7$ have an active subspace, nevertheless still there exist \lq\lq more important directions" for the approximation, which are found by the optimization of the matrix $\mA_{\boldsymbol{\theta}}$ of the first layer.
\end{itemize}

In all the cases the optimized two-layered kernel is better suited for the approximation of the given target function than any kernel with a tuned single hyperparameter in view of the preasymptotic range. These results on function approximation raise also the demand for further theoretical work, 
to better understand the faster convergence rates with approximation theory results. 

\begin{table}[]
\centering 
\begin{tabular}{l|lll}
 & $f_5$ & $f_6$ & $f_7$ \\ \hline
Kernel optimization & 8.936s & 9.173s & 9.817s \\
average VKOGA runtime & 8.087s & 8.329s & 8.787s \\ \hline
2L-VKOGA kernel MSE & $4.351 \cdot 10^{-8}$ & $5.553 \cdot 10^{-4}$ & $2.730 \cdot 10^{-3}$ \\
standard kernel MSE & $9.009 \cdot 10^{-3}$ & $4.174 \cdot 10^{-3}$ & $6.261 \cdot 10^{-3}$ \\ \hline
\end{tabular}
\caption{Optimization and VKOGA runtime for $f_5, f_6$ and $f_7$. 
The optimization of the two-layered kernel takes approximately as much time as the runtime of VKOGA.
The optimized two-layered kernel is several orders of magnitude more accurate, in terms of Mean Squared Error (MSE), than a standard kernel model when using the same expansion size.}
\label{tab:tabular_results}
\end{table}

\begin{figure}[h]
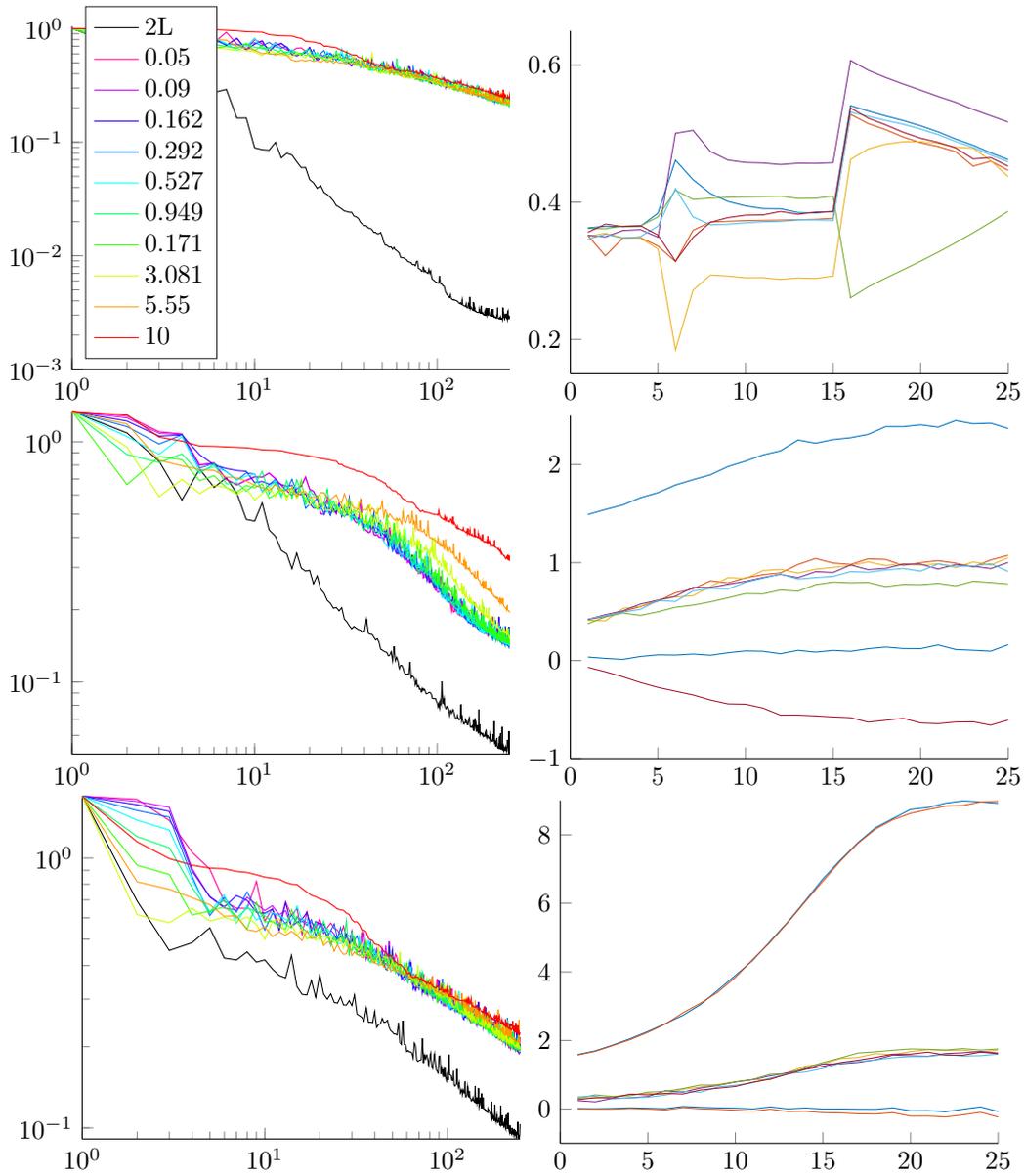

\centering
\setlength\fwidth{.4\textwidth}
\ifplots
\input{Figures/train_hist_5d_faster_conv.tex}
%
%
\definecolor{mycolor1}{rgb}{0.00000,0.44700,0.74100}%
\definecolor{mycolor2}{rgb}{0.85000,0.32500,0.09800}%
\definecolor{mycolor3}{rgb}{0.92900,0.69400,0.12500}%
\definecolor{mycolor4}{rgb}{0.49400,0.18400,0.55600}%
\definecolor{mycolor5}{rgb}{0.46600,0.67400,0.18800}%
\definecolor{mycolor6}{rgb}{0.30100,0.74500,0.93300}%
\definecolor{mycolor7}{rgb}{0.63500,0.07800,0.18400}%
\begin{tikzpicture}

\begin{axis}[%
width=0.951\fwidth,
height=0.75\fwidth,
at={(0\fwidth,0\fwidth)},
scale only axis,
xmin=0,
xmax=25,
ymin=0.15,
ymax=0.65,
axis background/.style={fill=white},
axis x line*=bottom,
axis y line*=left,
legend style={legend cell align=left, align=left, draw=white!15!black}
]
\addplot [color=mycolor1]
  table[row sep=crcr]{%
1	0.362655282020569\\
2	0.364706724882126\\
3	0.364430725574493\\
4	0.364994287490845\\
5	0.384068608283997\\
6	0.46139732003212\\
7	0.432983130216599\\
8	0.412709355354309\\
9	0.401021003723145\\
10	0.394628077745438\\
11	0.390743136405945\\
12	0.390512615442276\\
13	0.38499915599823\\
14	0.384406864643097\\
15	0.386616438627243\\
16	0.540865540504456\\
17	0.533002734184265\\
18	0.525799691677094\\
19	0.519174873828888\\
20	0.51148384809494\\
21	0.502671778202057\\
22	0.492100983858109\\
23	0.4831523001194\\
24	0.47197088599205\\
25	0.462144881486893\\
};

\addplot [color=mycolor2]
  table[row sep=crcr]{%
1	0.351481020450592\\
2	0.321811705827713\\
3	0.347966492176056\\
4	0.348376929759979\\
5	0.336082100868225\\
6	0.31351113319397\\
7	0.359089553356171\\
8	0.37109112739563\\
9	0.372170060873032\\
10	0.372968196868896\\
11	0.373501151800156\\
12	0.373836517333984\\
13	0.374346196651459\\
14	0.374955505132675\\
15	0.376527488231659\\
16	0.52809864282608\\
17	0.514723181724548\\
18	0.505873203277588\\
19	0.495151460170746\\
20	0.486797302961349\\
21	0.481331557035446\\
22	0.473583072423935\\
23	0.452539503574371\\
24	0.459836840629578\\
25	0.446562945842743\\
};

\addplot [color=mycolor3]
  table[row sep=crcr]{%
1	0.349860489368439\\
2	0.354729115962982\\
3	0.348320037126541\\
4	0.34759709239006\\
5	0.331737726926804\\
6	0.184484541416168\\
7	0.271687895059586\\
8	0.293929487466812\\
9	0.292332679033279\\
10	0.289820611476898\\
11	0.290006428956985\\
12	0.287420213222504\\
13	0.289591163396835\\
14	0.288786709308624\\
15	0.29243278503418\\
16	0.462393283843994\\
17	0.47765576839447\\
18	0.484786629676819\\
19	0.488123685121536\\
20	0.488247066736221\\
21	0.485864877700806\\
22	0.480445921421051\\
23	0.478716969490051\\
24	0.460697799921036\\
25	0.437296211719513\\
};

\addplot [color=mycolor4]
  table[row sep=crcr]{%
1	0.351775974035263\\
2	0.349418491125107\\
3	0.358799904584885\\
4	0.360082864761353\\
5	0.349016398191452\\
6	0.500571072101593\\
7	0.504836320877075\\
8	0.473915129899979\\
9	0.461427718400955\\
10	0.457987606525421\\
11	0.457268059253693\\
12	0.454930126667023\\
13	0.456943452358246\\
14	0.456633180379868\\
15	0.457717686891556\\
16	0.606808602809906\\
17	0.592526495456696\\
18	0.581923007965088\\
19	0.572426497936249\\
20	0.563450336456299\\
21	0.554298937320709\\
22	0.545792579650879\\
23	0.535281717777252\\
24	0.52603667974472\\
25	0.51693195104599\\
};

\addplot [color=mycolor5]
  table[row sep=crcr]{%
1	0.361704379320145\\
2	0.361222892999649\\
3	0.365241557359695\\
4	0.365516901016235\\
5	0.378543555736542\\
6	0.417604327201843\\
7	0.404027551412582\\
8	0.405912905931473\\
9	0.407130986452103\\
10	0.407763719558716\\
11	0.408106952905655\\
12	0.408915430307388\\
13	0.405462086200714\\
14	0.405697196722031\\
15	0.408981680870056\\
16	0.260593920946121\\
17	0.276436984539032\\
18	0.289296001195908\\
19	0.301558613777161\\
20	0.314065277576447\\
21	0.327183187007904\\
22	0.340993136167526\\
23	0.35551905632019\\
24	0.370824664831161\\
25	0.386883795261383\\
};

\addplot [color=mycolor6]
  table[row sep=crcr]{%
1	0.346108794212341\\
2	0.353192538022995\\
3	0.347451508045197\\
4	0.349807947874069\\
5	0.365123897790909\\
6	0.419736295938492\\
7	0.378744035959244\\
8	0.36710712313652\\
9	0.367961525917053\\
10	0.369845628738403\\
11	0.371403813362122\\
12	0.372010409832001\\
13	0.374144941568375\\
14	0.374138385057449\\
15	0.373316526412964\\
16	0.531305432319641\\
17	0.52547949552536\\
18	0.519359469413757\\
19	0.513756632804871\\
20	0.50682258605957\\
21	0.498697936534882\\
22	0.488596230745316\\
23	0.479963898658752\\
24	0.468845725059509\\
25	0.459006816148758\\
};

\addplot [color=mycolor7]
  table[row sep=crcr]{%
1	0.356273829936981\\
2	0.368066012859344\\
3	0.364414989948273\\
4	0.365861535072327\\
5	0.352102816104889\\
6	0.31405445933342\\
7	0.348750054836273\\
8	0.370755761861801\\
9	0.377640008926392\\
10	0.381097346544266\\
11	0.381546467542648\\
12	0.386548072099686\\
13	0.382442086935043\\
14	0.385687619447708\\
15	0.386356830596924\\
16	0.537415385246277\\
17	0.522657215595245\\
18	0.512900650501251\\
19	0.502118527889252\\
20	0.493376135826111\\
21	0.487341552972794\\
22	0.479268014431\\
23	0.463087320327759\\
24	0.465028345584869\\
25	0.452613741159439\\
};

\end{axis}
\end{tikzpicture}%
\input{Figures/train_hist_6d_kink.tex}
%
%
\definecolor{mycolor1}{rgb}{0.00000,0.44700,0.74100}%
\definecolor{mycolor2}{rgb}{0.85000,0.32500,0.09800}%
\definecolor{mycolor3}{rgb}{0.92900,0.69400,0.12500}%
\definecolor{mycolor4}{rgb}{0.49400,0.18400,0.55600}%
\definecolor{mycolor5}{rgb}{0.46600,0.67400,0.18800}%
\definecolor{mycolor6}{rgb}{0.30100,0.74500,0.93300}%
\definecolor{mycolor7}{rgb}{0.63500,0.07800,0.18400}%
\begin{tikzpicture}

\begin{axis}[%
width=0.951\fwidth,
height=0.75\fwidth,
at={(0\fwidth,0\fwidth)},
scale only axis,
xmin=0,
xmax=25,
ymin=-1,
ymax=2.5,
axis background/.style={fill=white},
axis x line*=bottom,
axis y line*=left,
legend style={legend cell align=left, align=left, draw=white!15!black}
]
\addplot [color=mycolor1]
  table[row sep=crcr]{%
1	1.49071657657623\\
2	1.54135954380035\\
3	1.58800768852234\\
4	1.66242504119873\\
5	1.71403169631958\\
6	1.79184830188751\\
7	1.84611105918884\\
8	1.89600384235382\\
9	1.97966158390045\\
10	2.0351996421814\\
11	2.09920239448547\\
12	2.14128994941711\\
13	2.25133895874023\\
14	2.21839666366577\\
15	2.25512337684631\\
16	2.27417826652527\\
17	2.30847334861755\\
18	2.38738012313843\\
19	2.38703274726868\\
20	2.40621757507324\\
21	2.38212156295776\\
22	2.44952440261841\\
23	2.41645836830139\\
24	2.42311382293701\\
25	2.36823844909668\\
};

\addplot [color=mycolor2]
  table[row sep=crcr]{%
1	0.416855812072754\\
2	0.44920489192009\\
3	0.500184774398804\\
4	0.550527215003967\\
5	0.605735778808594\\
6	0.691612243652344\\
7	0.730441153049469\\
8	0.814212143421173\\
9	0.791399359703064\\
10	0.843945503234863\\
11	0.876512587070465\\
12	0.894899308681488\\
13	0.981036245822906\\
14	1.04176425933838\\
15	0.995210886001587\\
16	0.981163322925568\\
17	1.03811430931091\\
18	1.03188478946686\\
19	0.971059381961823\\
20	0.999592542648315\\
21	1.02007710933685\\
22	0.993998289108276\\
23	0.955316781997681\\
24	1.02685117721558\\
25	1.07369446754456\\
};

\addplot [color=mycolor3]
  table[row sep=crcr]{%
1	0.40976271033287\\
2	0.405505031347275\\
3	0.531993448734283\\
4	0.552497327327728\\
5	0.620084404945374\\
6	0.647971928119659\\
7	0.661277711391449\\
8	0.75530081987381\\
9	0.845808327198029\\
10	0.837840974330902\\
11	0.919377088546753\\
12	0.931109189987183\\
13	0.893386960029602\\
14	0.932862222194672\\
15	0.95091849565506\\
16	0.971520721912384\\
17	1.007737159729\\
18	0.969889879226685\\
19	0.986599385738373\\
20	0.976288557052612\\
21	0.987463057041168\\
22	0.949829041957855\\
23	1.00640022754669\\
24	0.975843727588654\\
25	1.04983329772949\\
};

\addplot [color=mycolor4]
  table[row sep=crcr]{%
1	0.420461356639862\\
2	0.467831403017044\\
3	0.507381558418274\\
4	0.576356887817383\\
5	0.615510940551758\\
6	0.655916690826416\\
7	0.746101498603821\\
8	0.745819091796875\\
9	0.778564870357513\\
10	0.808434188365936\\
11	0.847869098186493\\
12	0.879725396633148\\
13	0.846398413181305\\
14	0.907800257205963\\
15	0.899262189865112\\
16	0.975942969322205\\
17	0.928428471088409\\
18	0.9411301612854\\
19	0.984637796878815\\
20	0.983996152877808\\
21	0.935151338577271\\
22	0.980562925338745\\
23	0.964151203632355\\
24	0.939296543598175\\
25	1.0017341375351\\
};

\addplot [color=mycolor5]
  table[row sep=crcr]{%
1	0.376306235790253\\
2	0.445020496845245\\
3	0.484659403562546\\
4	0.461056530475616\\
5	0.497749716043472\\
6	0.5437873005867\\
7	0.565505385398865\\
8	0.598800539970398\\
9	0.639890611171722\\
10	0.683091044425964\\
11	0.679995179176331\\
12	0.719017624855042\\
13	0.707432568073273\\
14	0.77479088306427\\
15	0.801317512989044\\
16	0.793516516685486\\
17	0.797144949436188\\
18	0.755079567432404\\
19	0.777164041996002\\
20	0.773775696754456\\
21	0.788084506988525\\
22	0.762811064720154\\
23	0.809251725673676\\
24	0.796435713768005\\
25	0.781101107597351\\
};

\addplot [color=mycolor6]
  table[row sep=crcr]{%
1	0.423256039619446\\
2	0.456554651260376\\
3	0.49164479970932\\
4	0.523161888122559\\
5	0.613095700740814\\
6	0.602579593658447\\
7	0.709830939769745\\
8	0.734346628189087\\
9	0.730390608310699\\
10	0.798387825489044\\
11	0.840139925479889\\
12	0.882631182670593\\
13	0.831124782562256\\
14	0.846956491470337\\
15	0.859144806861877\\
16	0.908178925514221\\
17	0.901362776756287\\
18	0.924547791481018\\
19	0.940509080886841\\
20	0.912784993648529\\
21	0.987651288509369\\
22	0.960791289806366\\
23	0.965281784534454\\
24	0.989750385284424\\
25	0.909563302993774\\
};

\addplot [color=mycolor7]
  table[row sep=crcr]{%
1	-0.0688645169138908\\
2	-0.114971078932285\\
3	-0.166440606117249\\
4	-0.2253348082304\\
5	-0.274571239948273\\
6	-0.313361346721649\\
7	-0.35138338804245\\
8	-0.403505474328995\\
9	-0.442892372608185\\
10	-0.446044921875\\
11	-0.485320925712585\\
12	-0.557541072368622\\
13	-0.556771874427795\\
14	-0.564958333969116\\
15	-0.575411200523376\\
16	-0.583025455474854\\
17	-0.628840386867523\\
18	-0.609229743480682\\
19	-0.589344680309296\\
20	-0.637377858161926\\
21	-0.644401729106903\\
22	-0.627228438854218\\
23	-0.625053703784943\\
24	-0.659671485424042\\
25	-0.607025682926178\\
};

\addplot [color=mycolor1]
  table[row sep=crcr]{%
1	0.0357886590063572\\
2	0.0218284390866756\\
3	0.0118480380624533\\
4	0.0418303906917572\\
5	0.0583440475165844\\
6	0.0558630377054214\\
7	0.0665247067809105\\
8	0.0541851036250591\\
9	0.0797440335154533\\
10	0.0985146686434746\\
11	0.0946167409420013\\
12	0.0698122084140778\\
13	0.104495525360107\\
14	0.0857371613383293\\
15	0.103562690317631\\
16	0.0939729958772659\\
17	0.120627284049988\\
18	0.137385278940201\\
19	0.122490659356117\\
20	0.121126614511013\\
21	0.158981055021286\\
22	0.112626820802689\\
23	0.104700237512589\\
24	0.0953338518738747\\
25	0.161414340138435\\
};

\end{axis}
\end{tikzpicture}%
\input{Figures/train_hist_7d_semiactive.tex}
%
%
\definecolor{mycolor1}{rgb}{0.00000,0.44700,0.74100}%
\definecolor{mycolor2}{rgb}{0.85000,0.32500,0.09800}%
\definecolor{mycolor3}{rgb}{0.92900,0.69400,0.12500}%
\definecolor{mycolor4}{rgb}{0.49400,0.18400,0.55600}%
\definecolor{mycolor5}{rgb}{0.46600,0.67400,0.18800}%
\definecolor{mycolor6}{rgb}{0.30100,0.74500,0.93300}%
\definecolor{mycolor7}{rgb}{0.63500,0.07800,0.18400}%
\begin{tikzpicture}

\begin{axis}[%
width=0.951\fwidth,
height=0.75\fwidth,
at={(0\fwidth,0\fwidth)},
scale only axis,
xmin=0,
xmax=25,
ymin=-1,
ymax=9,
axis background/.style={fill=white},
axis x line*=bottom,
axis y line*=left,
legend style={legend cell align=left, align=left, draw=white!15!black}
]
\addplot [color=mycolor1]
  table[row sep=crcr]{%
1	1.59130847454071\\
2	1.69828474521637\\
3	1.86628437042236\\
4	2.0532763004303\\
5	2.26192235946655\\
6	2.49161148071289\\
7	2.73118996620178\\
8	3.0440092086792\\
9	3.46334457397461\\
10	3.89661526679993\\
11	4.33873558044434\\
12	4.89925718307495\\
13	5.4603009223938\\
14	6.07887268066406\\
15	6.71519231796265\\
16	7.26400327682495\\
17	7.77366638183594\\
18	8.20570087432861\\
19	8.4756498336792\\
20	8.73793697357178\\
21	8.79599285125732\\
22	8.9225492477417\\
23	8.98991870880127\\
24	8.96211338043213\\
25	8.91784381866455\\
};

\addplot [color=mycolor2]
  table[row sep=crcr]{%
1	1.57589960098267\\
2	1.68363237380981\\
3	1.85089755058289\\
4	2.02451372146606\\
5	2.23433041572571\\
6	2.47651934623718\\
7	2.80492186546326\\
8	3.08471536636353\\
9	3.40728521347046\\
10	3.83969879150391\\
11	4.3660717010498\\
12	4.87067079544067\\
13	5.44347476959229\\
14	6.05154371261597\\
15	6.64870834350586\\
16	7.23305892944336\\
17	7.7541241645813\\
18	8.16721439361572\\
19	8.44116878509521\\
20	8.6252555847168\\
21	8.73724365234375\\
22	8.83656978607178\\
23	8.85562801361084\\
24	8.96045684814453\\
25	8.97795581817627\\
};

\addplot [color=mycolor3]
  table[row sep=crcr]{%
1	0.330161452293396\\
2	0.322624802589417\\
3	0.365474343299866\\
4	0.413821130990982\\
5	0.457177698612213\\
6	0.43763467669487\\
7	0.579929888248444\\
8	0.642691493034363\\
9	0.716446161270142\\
10	0.794654071331024\\
11	0.863819718360901\\
12	0.859086155891418\\
13	1.06790959835052\\
14	1.25555849075317\\
15	1.31393444538116\\
16	1.47409892082214\\
17	1.50608968734741\\
18	1.61573803424835\\
19	1.58561325073242\\
20	1.67559540271759\\
21	1.73336565494537\\
22	1.70767092704773\\
23	1.72057163715363\\
24	1.65822815895081\\
25	1.72411775588989\\
};

\addplot [color=mycolor4]
  table[row sep=crcr]{%
1	0.247312128543854\\
2	0.210770085453987\\
3	0.314618408679962\\
4	0.326151758432388\\
5	0.365959167480469\\
6	0.534582376480103\\
7	0.493880718946457\\
8	0.614031374454498\\
9	0.689180314540863\\
10	0.788229882717133\\
11	0.857597947120667\\
12	0.970601499080658\\
13	1.0581157207489\\
14	1.17222511768341\\
15	1.31358516216278\\
16	1.31845605373383\\
17	1.35912036895752\\
18	1.43737041950226\\
19	1.57528030872345\\
20	1.55251109600067\\
21	1.5402979850769\\
22	1.61911427974701\\
23	1.64163863658905\\
24	1.6817489862442\\
25	1.63408899307251\\
};

\addplot [color=mycolor5]
  table[row sep=crcr]{%
1	0.312533348798752\\
2	0.414349317550659\\
3	0.366733908653259\\
4	0.390038430690765\\
5	0.494095742702484\\
6	0.516449332237244\\
7	0.596977829933167\\
8	0.702353656291962\\
9	0.696377635002136\\
10	0.791922032833099\\
11	0.864043951034546\\
12	1.01505184173584\\
13	1.02706265449524\\
14	1.20461976528168\\
15	1.35722458362579\\
16	1.4903975725174\\
17	1.63506305217743\\
18	1.66923725605011\\
19	1.71561515331268\\
20	1.7533483505249\\
21	1.74301362037659\\
22	1.72350454330444\\
23	1.76359415054321\\
24	1.72075128555298\\
25	1.75323975086212\\
};

\addplot [color=mycolor6]
  table[row sep=crcr]{%
1	0.357113718986511\\
2	0.373195379972458\\
3	0.298520505428314\\
4	0.327450096607208\\
5	0.345057994127274\\
6	0.408230811357498\\
7	0.521695375442505\\
8	0.491372644901276\\
9	0.644180357456207\\
10	0.693395435810089\\
11	0.764357447624207\\
12	0.902453601360321\\
13	1.03181123733521\\
14	1.07804322242737\\
15	1.18243741989136\\
16	1.33699476718903\\
17	1.38441872596741\\
18	1.43331789970398\\
19	1.49786794185638\\
20	1.52967536449432\\
21	1.53074634075165\\
22	1.59150445461273\\
23	1.54613971710205\\
24	1.55666065216064\\
25	1.60135018825531\\
};

\addplot [color=mycolor7]
  table[row sep=crcr]{%
1	0.275579869747162\\
2	0.331184059381485\\
3	0.330758541822433\\
4	0.434656649827957\\
5	0.416766494512558\\
6	0.456673890352249\\
7	0.439315557479858\\
8	0.555349946022034\\
9	0.608504235744476\\
10	0.658279597759247\\
11	0.782848238945007\\
12	0.8717360496521\\
13	1.02108132839203\\
14	1.15555119514465\\
15	1.24538326263428\\
16	1.35480034351349\\
17	1.42429506778717\\
18	1.50664854049683\\
19	1.60087835788727\\
20	1.60105347633362\\
21	1.65952754020691\\
22	1.57446432113647\\
23	1.56993198394775\\
24	1.66124379634857\\
25	1.61635148525238\\
};

\addplot [color=mycolor1]
  table[row sep=crcr]{%
1	0.016448512673378\\
2	0.0106406807899475\\
3	0.0195757504552603\\
4	0.0370286367833614\\
5	0.024498550221324\\
6	0.0112396497279406\\
7	0.0761632919311523\\
8	0.0523805841803551\\
9	0.03540064021945\\
10	0.02915009111166\\
11	0.0034450099337846\\
12	0.0636796578764915\\
13	0.00495172571390867\\
14	0.0324491150677204\\
15	0.00173487816937268\\
16	0.00371058122254908\\
17	-0.00418328586965799\\
18	-0.00636858819052577\\
19	0.0366100668907166\\
20	-0.0488493703305721\\
21	-0.0451893657445908\\
22	-0.0791141837835312\\
23	-0.00301972217857838\\
24	0.0635900795459747\\
25	-0.0706094205379486\\
};

\addplot [color=mycolor2]
  table[row sep=crcr]{%
1	0.00689380103722215\\
2	-0.000567731214687228\\
3	-0.00120143289677799\\
4	0.0183633994311094\\
5	0.00177051383070648\\
6	-0.025550065562129\\
7	0.0448913425207138\\
8	0.00858901627361774\\
9	-0.00639494461938739\\
10	-0.0297076497226954\\
11	-0.0600821487605572\\
12	-0.00170811312273145\\
13	-0.0712547078728676\\
14	-0.0638572350144386\\
15	-0.103517979383469\\
16	-0.111354678869247\\
17	-0.136982038617134\\
18	-0.14463484287262\\
19	-0.104495726525784\\
20	-0.20498152077198\\
21	-0.202005580067635\\
22	-0.229747459292412\\
23	-0.168196305632591\\
24	-0.0937421917915344\\
25	-0.230514124035835\\
};

\end{axis}
\end{tikzpicture}%
\else

\fi
\caption{Top: 5D experiment, middle: 6D experiment, bottom: 7D experiment. 
Left: Visualization of the $\Vert \cdot \Vert_{L^\infty(\Omega)}$ error ($y$-axis) in the number of greedily selected points ($x$-axis) for both the standard kernel and the 2L-VKOGA.
Right: Visualization of the change of several matrix entries of the matrix $\mA_{\boldsymbol{\theta}}$ ($y$-axis) during the optimization epochs ($x$-axis).}
\label{fig:optimized_vs_standard_kernel}
\end{figure}

\subsection{Machine learning data sets} \label{subsec:num_ex_ml_data}

In order to show the usability of our 2L-VKOGA method also on real world datasets, 
we compare our method to standard hyperparameter tuned approaches on 12 out of 15 regression datasets which were used in \cite{holzmuller2022framework}. 
We excluded the three datasets: \textit{methane}, \textit{poker} and \textit{protein} because those datasets are not suitable for sparse kernel models. 
These datasets have input dimensionality ranging from 2 to 379 and sampling sizes between 8153 and 300000, 
see Table \ref{tab:tabular_datasets}. For more details on the used datasets, we refer to \cite[Table E.1, E.2]{holzmuller2022framework}.

We note that the motivation of surrogate modeling might not be intrinsically given on those datasets, however they still provide meaningful benchmark to compare our method with standard cross validated kernel models.

We used the same setup for our comparison as in Subsection \ref{subsec:num_ex_func_approx}, 
with the small modification that we used a regularization of $10^{-3}$ for the stabilization of the numerical calculation of Rippa's formula and a regularization of $10^{-4}$ within the greedy approximation.
These higher regularizations are due to the fact that these real world datasets are possibly noisy.
Note that these hyperparameters were used out of the box without any fine tuning or adaption to any specific datasets, which shows the general applicability of the approach.
The experiments were rerun five times for different training test splits in order to mitigate the randomness of the split. We note that we did not rerun the (partly randomized, due to batch selection) kernel optimization several times within a given training test split. This is discussed in more detail in Subsection \ref{subsec:num_stability}, which highlights the robustness and stability of our optimization procedure.

The results for the different datasets are visualized in the Figures \ref{fig:ml_datasets_results_1} and \ref{fig:ml_datasets_results_2}, where the decay of the test MSE error is displayed for one out of the five reruns. 
We waived to include the error bars, because they provided very limited further insights but made the visualizations of the results more difficult. \\ 
The results for the 12 datasets are ordered from \textit{better} to \textit{worse} (from the 2L-VKOGA point of view) by using the mean relative improvement of the 2L-VKOGA approach in comparison to the best hyperparameter tuned kernel as a criterion.
\begin{itemize}
\item For the six datasets \textit{ct}, \textit{sgemm}, \textit{wecs}, \textit{mlr\_knn\_rng}, \textit{fried} and \textit{kegg\_undir\_uci} one can observe that the 2L-VKOGA approach is quite consistently better than any hyperparameter tuned kernel. 
Especially for e.g.\ \textit{ct} or \textit{sgemm}, the 2L-VKOGA approach with around 50 centers can already reach the same accuracy as the best hyperparameter tuned kernel model with 1000 centers.
We note that in some datasets (e.g.\ in \textit{fried}) one can observe an overall saturation of the accuracy, for any method. 
However the 2L-VKOGA approach achieves this saturation already with a way smaller expansion size. 

One should note that this does not necessarily imply that the 2L-VKOGA approach is cheaper to evaluate, because the linear mapping of the first layer is possibly costly, especially e.g.\ for the 379 dimensional \textit{ct} dataset. 
Though for the 14 dimensional \textit{sgemm} dataset we can for sure expect a benefit.
\item For the six datasets \textit{diamonds}, \textit{sarcos}, \textit{stock}, \textit{road\_network}, \textit{online\_video} and \textit{query\_agg\_count}, the 2L-VKOGA approach is not consistently better:
One can observe alternating performances e.g.\ for the \textit{diamonds}, \textit{sarcos} and \textit{online\_video} dataset, where the 2L-VKOGA approach provides better models for small expansion sizes, but no longer for large expansion sizes.
For \textit{stock} and \textit{road\_network} the 2L-VKOGA approach is (asymptotically) on par with the hyperparameter tuned method.
Only for the \textit{query\_agg\_count} the 2L-VKOGA approach seems to be consistently inferior to well chosen hyperparameter tuned kernels.
A possible remedy in view of the alternating behaviour might be larger or even varying batch sizes during training or other optimization objectives. 
We leave these points for future research, while in the following we try to give some explanations about the fact that the  2L-VKOGA is not necessarily better than a well chosen hyperparameter tuned model.
\end{itemize}
We can again leverage the eigenvalues of the optimized first layer matrix $\mA_{\bs{\theta}}$:
A standard kernel can be seen as using the identity matrix and therefore having all eigenvalues equal to one, meaning that all directions in the Euclidean space are of equal importance.
A matrix $\mA_{\bs{\theta}}$ with only a few large singular values might, on the other hand side,  indicate that there are more important directions in the dataset. Therefore we use the \textit{cumulative power}
\begin{align}
\label{eq:singularvalue_ratio}
n \mapsto \frac{\sum_{i=1}^n |s_i(\mA_{\bs{\theta}})|}{\sum_{i=1}^d |s_i(\mA_{\bs{\theta}})|}, \quad 1 \leq n \leq d
\end{align}
as a criterion (whereby the singular values $s_i(\mA_{\bs{\theta}})$ are ordered from the largest to the smallest one according to their absolute value) that allows us to understand how much power is clustered in the top singular values.
The behaviour of the quantity in Eq.\ \eqref{eq:singularvalue_ratio} is depicted in Figure \ref{fig:overview_singularvalues}, whereby $n/d$ is used for the $x$-axis. 
This allows us to compare datasets with different dimension $d$. 
One can see that the 2L-VKOGA models are better, if the quantity Eq.\ \eqref{eq:singularvalue_ratio} increases quickly, 
i.e.\ if a lot of power is capture by a few large singular values, which is the case for example for \textit{ct} or \textit{sgemm}. 
On the other hand, a slow increase (e.g.\ for \textit{query\_agg\_count} or \textit{online\_video}) is linked to the fact that we have no benefit in using the 2L-VKOGA approach. 
Therefore the decay of the absolute values of the singular values $s_i(\mA_{\boldsymbol{\theta}})$, or especially the existence of particular large singular values, can be used as a criterion to assess whether the kernel optimization is likely to improve the kernel model or not.

Based on this observation it might make sense to use and optimize a $b \times d$ matrix with $b < d$ instead of the full $d \times d$ matrix.
 We leave this idea to future research. 
 Furthermore another natural idea due to using the MSE as a test metric would be to use a least square approximation with the selected $f$-greedy centers.
However, as this would be applied to both the classical and 2L-VKOGA model, it would not change the comparison a lot.

\begin{figure}[h]
\centering
\setlength\fwidth{.4\textwidth}
\ifplots
%
%
\definecolor{mycolor1}{rgb}{1.00000,0.00000,0.60000}%
\definecolor{mycolor2}{rgb}{0.80000,0.00000,1.00000}%
\definecolor{mycolor3}{rgb}{0.00000,0.40000,1.00000}%
\definecolor{mycolor4}{rgb}{0.00000,1.00000,1.00000}%
\definecolor{mycolor5}{rgb}{0.20000,1.00000,0.00000}%
\definecolor{mycolor6}{rgb}{0.80000,1.00000,0.00000}%
\definecolor{mycolor7}{rgb}{1.00000,0.60000,0.00000}%
\begin{tikzpicture}

\begin{axis}[%
width=0.951\fwidth,
height=0.75\fwidth,
at={(0\fwidth,0\fwidth)},
scale only axis,
xmode=log,
xmin=10,
xmax=1000,
xminorticks=true,
ymode=log,
ymin=0.0056016652981051,
ymax=1,
yminorticks=true,
axis background/.style={fill=white},
title style={font=\bfseries},
title={ct},
axis x line*=bottom,
axis y line*=left
]
\addplot [color=black, mark=x, mark options={solid, black}, forget plot]
  table[row sep=crcr]{%
10	0.195101138680005\\
17	0.192974055605198\\
28	0.0546845312261571\\
46	0.0179768405892351\\
77	0.017444545916417\\
129	0.0142710050100011\\
215	0.0103631408551339\\
359	0.00834543203674544\\
599	0.00685034222701795\\
1000	0.0056016652981051\\
};
\addplot [color=mycolor1, mark=x, mark options={solid, mycolor1}, forget plot]
  table[row sep=crcr]{%
10	0.615457176884671\\
17	0.497199575105351\\
28	0.35062485614416\\
46	0.275853462995589\\
77	0.154155661654851\\
129	0.103238491688894\\
215	0.0711018083227623\\
359	0.0415001515612093\\
599	0.0262754589286846\\
1000	0.0149879789895131\\
};
\addplot [color=mycolor2, mark=x, mark options={solid, mycolor2}, forget plot]
  table[row sep=crcr]{%
10	0.619484943131836\\
17	0.491606895501476\\
28	0.360148013295966\\
46	0.242536894247434\\
77	0.184044985310702\\
129	0.0984441561344756\\
215	0.0646459775760587\\
359	0.0440131960484144\\
599	0.0264683902661778\\
1000	0.0151421695273276\\
};
\addplot [color=blue!75!mycolor2, mark=x, mark options={solid, blue!75!mycolor2}, forget plot]
  table[row sep=crcr]{%
10	0.611973050748944\\
17	0.525609810002947\\
28	0.425638125222309\\
46	0.227033252485265\\
77	0.171559181733331\\
129	0.100771358451223\\
215	0.0622236184101038\\
359	0.0401330599437309\\
599	0.0260285004771206\\
1000	0.0148236056356429\\
};
\addplot [color=mycolor3, mark=x, mark options={solid, mycolor3}, forget plot]
  table[row sep=crcr]{%
10	0.628588347939125\\
17	0.627447803912656\\
28	0.442186738361674\\
46	0.285210825477752\\
77	0.192440765931287\\
129	0.0970788305603453\\
215	0.0694362883929151\\
359	0.0412802291994059\\
599	0.0255424442310989\\
1000	0.0147981096907319\\
};
\addplot [color=mycolor4, mark=x, mark options={solid, mycolor4}, forget plot]
  table[row sep=crcr]{%
10	0.72623634154211\\
17	0.548354301830035\\
28	0.441872841582521\\
46	0.389227607754806\\
77	0.249406997219424\\
129	0.110159515185566\\
215	0.0682944188627945\\
359	0.0423625894320687\\
599	0.0256691136551473\\
1000	0.0149756891037615\\
};
\addplot [color=green!60!mycolor4, mark=x, mark options={solid, green!60!mycolor4}, forget plot]
  table[row sep=crcr]{%
10	0.685442945870278\\
17	0.652180542893373\\
28	0.528130460756437\\
46	0.424514552764893\\
77	0.307150218671679\\
129	0.168322669211189\\
215	0.083532446016727\\
359	0.0450408853246325\\
599	0.0259806607578818\\
1000	0.0153535444826857\\
};
\addplot [color=mycolor5, mark=x, mark options={solid, mycolor5}, forget plot]
  table[row sep=crcr]{%
10	0.757493186066474\\
17	0.686967527335166\\
28	0.593599185372387\\
46	0.521927222073709\\
77	0.421403204835489\\
129	0.291136685160508\\
215	0.159905411098921\\
359	0.0768345613285375\\
599	0.0362628382099751\\
1000	0.0192809277070129\\
};
\addplot [color=mycolor6, mark=x, mark options={solid, mycolor6}, forget plot]
  table[row sep=crcr]{%
10	0.811278843916377\\
17	0.783550519514363\\
28	0.740354441269578\\
46	0.691005036157857\\
77	0.603413837526068\\
129	0.507709017917158\\
215	0.396373924846253\\
359	0.259206193026127\\
599	0.141903376893622\\
1000	0.0619595357481452\\
};
\addplot [color=mycolor7, mark=x, mark options={solid, mycolor7}, forget plot]
  table[row sep=crcr]{%
10	0.92094691105173\\
17	0.891104388985904\\
28	0.870679143441685\\
46	0.823785055110179\\
77	0.788357950104485\\
129	0.752947390308242\\
215	0.702677047926845\\
359	0.638192687696596\\
599	0.551693214730915\\
1000	0.442143552717374\\
};
\addplot [color=red, mark=x, mark options={solid, red}, forget plot]
  table[row sep=crcr]{%
10	0.97535060666241\\
17	0.971906960001754\\
28	0.964676399071394\\
46	0.956497824585202\\
77	0.942910324035708\\
129	0.921510007648014\\
215	0.896431887454758\\
359	0.86354528892476\\
599	0.824099645979002\\
1000	0.785808784751035\\
};
\end{axis}
\end{tikzpicture}%
%
%
\definecolor{mycolor1}{rgb}{1.00000,0.00000,0.60000}%
\definecolor{mycolor2}{rgb}{0.80000,0.00000,1.00000}%
\definecolor{mycolor3}{rgb}{0.00000,0.40000,1.00000}%
\definecolor{mycolor4}{rgb}{0.00000,1.00000,1.00000}%
\definecolor{mycolor5}{rgb}{0.20000,1.00000,0.00000}%
\definecolor{mycolor6}{rgb}{0.80000,1.00000,0.00000}%
\definecolor{mycolor7}{rgb}{1.00000,0.60000,0.00000}%
\begin{tikzpicture}

\begin{axis}[%
width=0.951\fwidth,
height=0.75\fwidth,
at={(0\fwidth,0\fwidth)},
scale only axis,
xmode=log,
xmin=10,
xmax=1000,
xminorticks=true,
ymode=log,
ymin=0.030173160561517,
ymax=1,
yminorticks=true,
axis background/.style={fill=white},
title style={font=\bfseries},
title={sgemm},
axis x line*=bottom,
axis y line*=left
]
\addplot [color=black, mark=x, mark options={solid, black}, forget plot]
  table[row sep=crcr]{%
10	0.261278417405402\\
17	0.187844592528063\\
28	0.129721374690587\\
46	0.10469606089587\\
77	0.083953946061946\\
129	0.0703116340150106\\
215	0.0546978255978039\\
359	0.0461010645421884\\
599	0.036294796833887\\
1000	0.030173160561517\\
};
\addplot [color=mycolor1, mark=x, mark options={solid, mycolor1}, forget plot]
  table[row sep=crcr]{%
10	0.457142332856729\\
17	0.379812049583636\\
28	0.331966733634856\\
46	0.281077931876366\\
77	0.244445334956676\\
129	0.19897922589057\\
215	0.174650031219036\\
359	0.161498662503282\\
599	0.140517626196868\\
1000	0.122631541663682\\
};
\addplot [color=mycolor2, mark=x, mark options={solid, mycolor2}, forget plot]
  table[row sep=crcr]{%
10	0.488352988785453\\
17	0.491828528026912\\
28	0.355462103824396\\
46	0.312284976340961\\
77	0.232114941102937\\
129	0.194415365462842\\
215	0.178056057984848\\
359	0.157556864687116\\
599	0.139165642300233\\
1000	0.121567999064952\\
};
\addplot [color=blue!75!mycolor2, mark=x, mark options={solid, blue!75!mycolor2}, forget plot]
  table[row sep=crcr]{%
10	0.579253044222296\\
17	0.457166193875765\\
28	0.36218511649481\\
46	0.292992985585466\\
77	0.254181819698247\\
129	0.201132187831554\\
215	0.17391489958089\\
359	0.158774721799313\\
599	0.13998497646654\\
1000	0.122206636880612\\
};
\addplot [color=mycolor3, mark=x, mark options={solid, mycolor3}, forget plot]
  table[row sep=crcr]{%
10	0.583410250658511\\
17	0.426268014925041\\
28	0.341275858032448\\
46	0.290651982030808\\
77	0.233652690945669\\
129	0.196226352861733\\
215	0.172301696316061\\
359	0.157653073391317\\
599	0.139448879998042\\
1000	0.122629671200124\\
};
\addplot [color=mycolor4, mark=x, mark options={solid, mycolor4}, forget plot]
  table[row sep=crcr]{%
10	0.576822624733058\\
17	0.450490820001704\\
28	0.385572722093788\\
46	0.311950637819321\\
77	0.250215140334625\\
129	0.202619356386897\\
215	0.176520701362385\\
359	0.157285518159535\\
599	0.138617026094902\\
1000	0.122038938053738\\
};
\addplot [color=green!60!mycolor4, mark=x, mark options={solid, green!60!mycolor4}, forget plot]
  table[row sep=crcr]{%
10	0.518291901880005\\
17	0.485166787808584\\
28	0.387508531961611\\
46	0.325280469470852\\
77	0.278556264405508\\
129	0.222756101741391\\
215	0.181215288573656\\
359	0.161258932032405\\
599	0.141813165500589\\
1000	0.123959546777945\\
};
\addplot [color=mycolor5, mark=x, mark options={solid, mycolor5}, forget plot]
  table[row sep=crcr]{%
10	0.7659352592394\\
17	0.661449501755443\\
28	0.494375583556211\\
46	0.434261560179317\\
77	0.324546943058358\\
129	0.265203983802148\\
215	0.211143305236272\\
359	0.177234005260493\\
599	0.151802207503769\\
1000	0.132346863828378\\
};
\addplot [color=mycolor6, mark=x, mark options={solid, mycolor6}, forget plot]
  table[row sep=crcr]{%
10	0.970938850019895\\
17	0.918993637037231\\
28	0.833957505199117\\
46	0.710740152376965\\
77	0.591973164755887\\
129	0.476815842616487\\
215	0.356180099400769\\
359	0.2708528717332\\
599	0.214072900985346\\
1000	0.17015429914437\\
};
\addplot [color=mycolor7, mark=x, mark options={solid, mycolor7}, forget plot]
  table[row sep=crcr]{%
10	0.971196039208915\\
17	0.954337144315112\\
28	0.93439513208613\\
46	0.906125259804762\\
77	0.868948827271256\\
129	0.84065886883419\\
215	0.797603517254769\\
359	0.71806441069213\\
599	0.605113543950972\\
1000	0.501349412171998\\
};
\addplot [color=red, mark=x, mark options={solid, red}, forget plot]
  table[row sep=crcr]{%
10	0.996196287037811\\
17	0.994950510778207\\
28	0.993125359745759\\
46	0.990568519129521\\
77	0.985295499359937\\
129	0.977960684597122\\
215	0.966894223250957\\
359	0.949229836687218\\
599	0.924543566755193\\
1000	0.889806409628056\\
};
\end{axis}
\end{tikzpicture}%
%
%
\definecolor{mycolor1}{rgb}{1.00000,0.00000,0.60000}%
\definecolor{mycolor2}{rgb}{0.80000,0.00000,1.00000}%
\definecolor{mycolor3}{rgb}{0.00000,0.40000,1.00000}%
\definecolor{mycolor4}{rgb}{0.00000,1.00000,1.00000}%
\definecolor{mycolor5}{rgb}{0.20000,1.00000,0.00000}%
\definecolor{mycolor6}{rgb}{0.80000,1.00000,0.00000}%
\definecolor{mycolor7}{rgb}{1.00000,0.60000,0.00000}%
\begin{tikzpicture}

\begin{axis}[%
width=0.951\fwidth,
height=0.75\fwidth,
at={(0\fwidth,0\fwidth)},
scale only axis,
xmode=log,
xmin=10,
xmax=1000,
xminorticks=true,
ymode=log,
ymin=0.0001,
ymax=10,
yminorticks=true,
axis background/.style={fill=white},
title style={font=\bfseries},
title={wecs},
axis x line*=bottom,
axis y line*=left
]
\addplot [color=black, mark=x, mark options={solid, black}, forget plot]
  table[row sep=crcr]{%
10	0.0104601998749228\\
17	0.00247209833302417\\
28	0.00186834249467555\\
46	0.00157957263006422\\
77	0.00154354405044039\\
129	0.00148915062928706\\
215	0.00109751271493493\\
359	0.000964179950295484\\
599	0.000812398916284386\\
1000	0.000707399125631462\\
};
\addplot [color=mycolor1, mark=x, mark options={solid, mycolor1}, forget plot]
  table[row sep=crcr]{%
10	0.713863619110766\\
17	0.333774225215472\\
28	0.199688267416076\\
46	0.0686980337325759\\
77	0.0233939029863264\\
129	0.0085162335980186\\
215	0.00293130963955966\\
359	0.00137449655361388\\
599	0.000986024610325891\\
1000	0.00078466783878277\\
};
\addplot [color=mycolor2, mark=x, mark options={solid, mycolor2}, forget plot]
  table[row sep=crcr]{%
10	0.73921134574813\\
17	0.304389123707513\\
28	0.183719548793792\\
46	0.0558019133665416\\
77	0.0178000784182765\\
129	0.00704404636384116\\
215	0.00309209613382931\\
359	0.00157352405426957\\
599	0.00101645806522483\\
1000	0.000793727742662307\\
};
\addplot [color=blue!75!mycolor2, mark=x, mark options={solid, blue!75!mycolor2}, forget plot]
  table[row sep=crcr]{%
10	0.783779164503439\\
17	0.534328842056109\\
28	0.181712047679808\\
46	0.0741953379027907\\
77	0.02505034155555\\
129	0.00909231717978515\\
215	0.00370309696398597\\
359	0.00162750326993664\\
599	0.0011694183637708\\
1000	0.00089484172684881\\
};
\addplot [color=mycolor3, mark=x, mark options={solid, mycolor3}, forget plot]
  table[row sep=crcr]{%
10	0.792251104711349\\
17	0.610506503562019\\
28	0.264013747986078\\
46	0.0960969735558413\\
77	0.0296007558357707\\
129	0.0131596536101939\\
215	0.00496923498795625\\
359	0.00214752759779892\\
599	0.00142594468870718\\
1000	0.0010558540723523\\
};
\addplot [color=mycolor4, mark=x, mark options={solid, mycolor4}, forget plot]
  table[row sep=crcr]{%
10	0.917422985862155\\
17	0.833716642012463\\
28	0.335329678358198\\
46	0.121498652628665\\
77	0.0385077919859029\\
129	0.0152644114006022\\
215	0.00610725350480947\\
359	0.00289341958217275\\
599	0.00188496325776677\\
1000	0.00143152865217195\\
};
\addplot [color=green!60!mycolor4, mark=x, mark options={solid, green!60!mycolor4}, forget plot]
  table[row sep=crcr]{%
10	2.08678643559388\\
17	0.795736313083803\\
28	0.463376095923909\\
46	0.239163754155587\\
77	0.0980335264775869\\
129	0.037728543854723\\
215	0.0128250269291206\\
359	0.00582296239742523\\
599	0.0036048388287869\\
1000	0.00251513462780042\\
};
\addplot [color=mycolor5, mark=x, mark options={solid, mycolor5}, forget plot]
  table[row sep=crcr]{%
10	1.93078783787813\\
17	1.35930181693048\\
28	1.12115955942897\\
46	0.470547678924283\\
77	0.157484661822404\\
129	0.090807674758313\\
215	0.0477080043365499\\
359	0.0253975249736295\\
599	0.0119503348583657\\
1000	0.00674449637193438\\
};
\addplot [color=mycolor6, mark=x, mark options={solid, mycolor6}, forget plot]
  table[row sep=crcr]{%
10	1.0717946514449\\
17	1.29334698546561\\
28	1.62257716148998\\
46	1.36579318147847\\
77	0.749627851529588\\
129	0.376896531898572\\
215	0.288347197007742\\
359	0.197058536917907\\
599	0.123035422931177\\
1000	0.0714919579324502\\
};
\addplot [color=mycolor7, mark=x, mark options={solid, mycolor7}, forget plot]
  table[row sep=crcr]{%
10	0.981350365754882\\
17	0.97476247782857\\
28	0.96798017888807\\
46	0.958619295040331\\
77	0.947688558106078\\
129	0.937505095113218\\
215	0.944176824249409\\
359	0.949489833455849\\
599	0.511749516805813\\
1000	0.491782261383961\\
};
\addplot [color=red, mark=x, mark options={solid, red}, forget plot]
  table[row sep=crcr]{%
10	0.989132236330273\\
17	0.987862832876483\\
28	0.987806226282253\\
46	0.986801041601315\\
77	0.985811618345062\\
129	0.980425715376734\\
215	0.973598119060593\\
359	0.964752335728915\\
599	0.955338611950684\\
1000	0.937797178429518\\
};
\end{axis}
\end{tikzpicture}%
%
%
\definecolor{mycolor1}{rgb}{1.00000,0.00000,0.60000}%
\definecolor{mycolor2}{rgb}{0.80000,0.00000,1.00000}%
\definecolor{mycolor3}{rgb}{0.00000,0.40000,1.00000}%
\definecolor{mycolor4}{rgb}{0.00000,1.00000,1.00000}%
\definecolor{mycolor5}{rgb}{0.20000,1.00000,0.00000}%
\definecolor{mycolor6}{rgb}{0.80000,1.00000,0.00000}%
\definecolor{mycolor7}{rgb}{1.00000,0.60000,0.00000}%
\begin{tikzpicture}

\begin{axis}[%
width=0.951\fwidth,
height=0.75\fwidth,
at={(0\fwidth,0\fwidth)},
scale only axis,
xmode=log,
xmin=10,
xmax=1000,
xminorticks=true,
ymode=log,
ymin=0.01,
ymax=10,
yminorticks=true,
axis background/.style={fill=white},
title style={font=\bfseries},
title={$\text{mlr}\_\text{k}\text{nn}\_\text{r}\text{ng}$},
axis x line*=bottom,
axis y line*=left
]
\addplot [color=black, mark=x, mark options={solid, black}, forget plot]
  table[row sep=crcr]{%
10	0.585468093020371\\
17	0.34021743561354\\
28	0.398802334295848\\
46	0.24529204887694\\
77	0.31770920780425\\
129	0.125922740127205\\
215	0.169045831654712\\
359	0.0612268347668686\\
599	0.0400473522239765\\
1000	0.0261096119747559\\
};
\addplot [color=mycolor1, mark=x, mark options={solid, mycolor1}, forget plot]
  table[row sep=crcr]{%
10	2.99582967184645\\
17	2.69852886107284\\
28	1.73930877841018\\
46	0.907008334559497\\
77	0.772538424160243\\
129	0.629059891621948\\
215	0.606685667386263\\
359	0.506900612318557\\
599	0.346188583172771\\
1000	0.238302468215608\\
};
\addplot [color=mycolor2, mark=x, mark options={solid, mycolor2}, forget plot]
  table[row sep=crcr]{%
10	2.98650872801966\\
17	2.70902059999458\\
28	1.75851734156213\\
46	0.886974474382062\\
77	0.812667686462622\\
129	0.67124004746803\\
215	0.618807581203847\\
359	0.518812871406778\\
599	0.361426655890826\\
1000	0.228260722867154\\
};
\addplot [color=blue!75!mycolor2, mark=x, mark options={solid, blue!75!mycolor2}, forget plot]
  table[row sep=crcr]{%
10	2.96855512313593\\
17	2.25897088007034\\
28	1.919828406807\\
46	0.944213311497103\\
77	0.755676826468451\\
129	0.639808949009771\\
215	0.589859216004562\\
359	0.513055019323441\\
599	0.346444153649601\\
1000	0.227846396300345\\
};
\addplot [color=mycolor3, mark=x, mark options={solid, mycolor3}, forget plot]
  table[row sep=crcr]{%
10	2.86838921418268\\
17	2.5881401160132\\
28	1.49137715011724\\
46	0.97602816366021\\
77	0.761184968098831\\
129	0.694226832345362\\
215	0.620967814609658\\
359	0.539770186494163\\
599	0.349898301032488\\
1000	0.234654031057947\\
};
\addplot [color=mycolor4, mark=x, mark options={solid, mycolor4}, forget plot]
  table[row sep=crcr]{%
10	2.93722815081663\\
17	2.31447295585899\\
28	2.00458976934317\\
46	1.08162028689064\\
77	0.838945555820106\\
129	0.646883822199307\\
215	0.634415050752179\\
359	0.509258507025189\\
599	0.354278648739493\\
1000	0.237092369843518\\
};
\addplot [color=green!60!mycolor4, mark=x, mark options={solid, green!60!mycolor4}, forget plot]
  table[row sep=crcr]{%
10	2.63044161113441\\
17	3.36990316216252\\
28	1.88071612333828\\
46	1.31354405988534\\
77	0.870740602819736\\
129	0.653586763428093\\
215	0.547672240866754\\
359	0.5396599428757\\
599	0.362721732263433\\
1000	0.244519685981481\\
};
\addplot [color=mycolor5, mark=x, mark options={solid, mycolor5}, forget plot]
  table[row sep=crcr]{%
10	3.17289885433613\\
17	3.73971432817087\\
28	2.13914267188872\\
46	1.41238632720545\\
77	0.924487883819875\\
129	0.658779125494484\\
215	0.551313600392898\\
359	0.528737425946503\\
599	0.398703542006789\\
1000	0.264190760310094\\
};
\addplot [color=mycolor6, mark=x, mark options={solid, mycolor6}, forget plot]
  table[row sep=crcr]{%
10	3.15979348304782\\
17	3.4972534046795\\
28	2.89662663832168\\
46	2.09114902773306\\
77	1.51961117901155\\
129	1.08228962473332\\
215	0.6518073575393\\
359	0.449533257136468\\
599	0.391328101247291\\
1000	0.30064986864363\\
};
\addplot [color=mycolor7, mark=x, mark options={solid, mycolor7}, forget plot]
  table[row sep=crcr]{%
10	1.35348434530827\\
17	1.82635771372126\\
28	2.11636198234503\\
46	2.59351480507274\\
77	2.39826439063637\\
129	1.92299389714528\\
215	1.45545447383481\\
359	0.921634897507636\\
599	0.490714538143831\\
1000	0.258723271658992\\
};
\addplot [color=red, mark=x, mark options={solid, red}, forget plot]
  table[row sep=crcr]{%
10	0.94648221157044\\
17	0.921926507865797\\
28	0.884138100992697\\
46	0.870353525095685\\
77	0.834072215822443\\
129	0.841255820129254\\
215	0.878582874547198\\
359	0.889620487107806\\
599	0.864056105022313\\
1000	0.674867707050866\\
};
\end{axis}
\end{tikzpicture}%
%
%
\definecolor{mycolor1}{rgb}{1.00000,0.00000,0.60000}%
\definecolor{mycolor2}{rgb}{0.80000,0.00000,1.00000}%
\definecolor{mycolor3}{rgb}{0.00000,0.40000,1.00000}%
\definecolor{mycolor4}{rgb}{0.00000,1.00000,1.00000}%
\definecolor{mycolor5}{rgb}{0.20000,1.00000,0.00000}%
\definecolor{mycolor6}{rgb}{0.80000,1.00000,0.00000}%
\definecolor{mycolor7}{rgb}{1.00000,0.60000,0.00000}%
\begin{tikzpicture}

\begin{axis}[%
width=0.951\fwidth,
height=0.75\fwidth,
at={(0\fwidth,0\fwidth)},
scale only axis,
xmode=log,
xmin=10,
xmax=1000,
xminorticks=true,
ymode=log,
ymin=0.0502711840190287,
ymax=1,
yminorticks=true,
axis background/.style={fill=white},
title style={font=\bfseries},
title={fried},
axis x line*=bottom,
axis y line*=left,
legend style={at={(0.03,0.03)}, anchor=south west, legend cell align=left, align=left, draw=white!15!black}
]
\addplot [color=black, mark=x, mark options={solid, black}]
  table[row sep=crcr]{%
10	0.123256999098451\\
17	0.1026343870421\\
28	0.0785339079340689\\
46	0.0655046779660202\\
77	0.0636372541380158\\
129	0.0581071677106308\\
215	0.056896271988317\\
359	0.0552687837361821\\
599	0.0528187918189329\\
1000	0.0510442682418493\\
};
\addlegendentry{2L}

\addplot [color=mycolor1, mark=x, mark options={solid, mycolor1}]
  table[row sep=crcr]{%
10	0.459851231160245\\
17	0.401189804495147\\
28	0.289590432526174\\
46	0.192222523678153\\
77	0.121554391234783\\
129	0.0977013281997719\\
215	0.0780151479092142\\
359	0.0626879377520444\\
599	0.0545119116818215\\
1000	0.0505436502899783\\
};
\addlegendentry{0.05}

\addplot [color=mycolor2, mark=x, mark options={solid, mycolor2}]
  table[row sep=crcr]{%
10	0.53243277694105\\
17	0.388790274412241\\
28	0.279421632660551\\
46	0.204670488655427\\
77	0.135191568114614\\
129	0.100088864859164\\
215	0.0780652092155441\\
359	0.0620205969392287\\
599	0.0550082875926627\\
1000	0.0507847622102568\\
};
\addlegendentry{0.09}

\addplot [color=blue!75!mycolor2, mark=x, mark options={solid, blue!75!mycolor2}]
  table[row sep=crcr]{%
10	0.507373804870481\\
17	0.433825362972868\\
28	0.274841849962106\\
46	0.203360261629846\\
77	0.128745698127938\\
129	0.106581666339669\\
215	0.0772061483796658\\
359	0.0632533158454169\\
599	0.054530707680676\\
1000	0.0502711840190287\\
};
\addlegendentry{0.162}

\addplot [color=mycolor3, mark=x, mark options={solid, mycolor3}]
  table[row sep=crcr]{%
10	0.479334782744159\\
17	0.350763302184583\\
28	0.238593630294995\\
46	0.194912886084757\\
77	0.143748361285626\\
129	0.0977950417888985\\
215	0.0790472315221189\\
359	0.0620748828726598\\
599	0.0554280557707513\\
1000	0.0510035220602206\\
};
\addlegendentry{0.292}

\addplot [color=mycolor4, mark=x, mark options={solid, mycolor4}]
  table[row sep=crcr]{%
10	0.491423242136123\\
17	0.342705888866183\\
28	0.2392832385623\\
46	0.198120929538177\\
77	0.137114038652655\\
129	0.114054608397574\\
215	0.0812119527182585\\
359	0.0637600774980381\\
599	0.0561299341111318\\
1000	0.0514703851640659\\
};
\addlegendentry{0.527}

\addplot [color=green!60!mycolor4, mark=x, mark options={solid, green!60!mycolor4}]
  table[row sep=crcr]{%
10	0.414215741102707\\
17	0.34469638515273\\
28	0.303616204198316\\
46	0.235461866810236\\
77	0.137437019756915\\
129	0.109127900758578\\
215	0.0874407868712443\\
359	0.0680531913719911\\
599	0.0585128349762015\\
1000	0.0523562558486871\\
};
\addlegendentry{0.949}

\addplot [color=mycolor5, mark=x, mark options={solid, mycolor5}]
  table[row sep=crcr]{%
10	0.443746745159593\\
17	0.366240613532769\\
28	0.327917995559079\\
46	0.283330760169934\\
77	0.212331718396431\\
129	0.142498753677317\\
215	0.102794423334714\\
359	0.0815041096773523\\
599	0.0660007436835723\\
1000	0.0574070188712727\\
};
\addlegendentry{1.71}

\addplot [color=mycolor6, mark=x, mark options={solid, mycolor6}]
  table[row sep=crcr]{%
10	0.689180368151724\\
17	0.580905978702494\\
28	0.473187723271218\\
46	0.384469724348498\\
77	0.310802871324883\\
129	0.252420162922629\\
215	0.192624594788015\\
359	0.136694103663319\\
599	0.102116502876677\\
1000	0.0799861288456674\\
};
\addlegendentry{3.081}

\addplot [color=mycolor7, mark=x, mark options={solid, mycolor7}]
  table[row sep=crcr]{%
10	0.93855117312127\\
17	0.917932772082348\\
28	0.878162178316414\\
46	0.831779885853382\\
77	0.757159415565924\\
129	0.668622696242856\\
215	0.563535733553504\\
359	0.450953529527939\\
599	0.341604048396497\\
1000	0.259179925126221\\
};
\addlegendentry{5.55}

\addplot [color=red, mark=x, mark options={solid, red}]
  table[row sep=crcr]{%
10	0.977535894885656\\
17	0.976729327320884\\
28	0.975334960161446\\
46	0.972853940588521\\
77	0.968460895583259\\
129	0.961882422249853\\
215	0.950786595979683\\
359	0.934390105427365\\
599	0.908926215252056\\
1000	0.873329114586907\\
};
\addlegendentry{10}

\end{axis}
\end{tikzpicture}%
%
%
\definecolor{mycolor1}{rgb}{1.00000,0.00000,0.60000}%
\definecolor{mycolor2}{rgb}{0.80000,0.00000,1.00000}%
\definecolor{mycolor3}{rgb}{0.00000,0.40000,1.00000}%
\definecolor{mycolor4}{rgb}{0.00000,1.00000,1.00000}%
\definecolor{mycolor5}{rgb}{0.20000,1.00000,0.00000}%
\definecolor{mycolor6}{rgb}{0.80000,1.00000,0.00000}%
\definecolor{mycolor7}{rgb}{1.00000,0.60000,0.00000}%
\begin{tikzpicture}

\begin{axis}[%
width=0.951\fwidth,
height=0.75\fwidth,
at={(0\fwidth,0\fwidth)},
scale only axis,
xmode=log,
xmin=10,
xmax=1000,
xminorticks=true,
ymode=log,
ymin=0.1,
ymax=11.6963507381542,
yminorticks=true,
axis background/.style={fill=white},
title style={font=\bfseries},
title={$\text{kegg}\_\text{u}\text{ndir}\_\text{u}\text{ci}$},
axis x line*=bottom,
axis y line*=left
]
\addplot [color=black, mark=x, mark options={solid, black}, forget plot]
  table[row sep=crcr]{%
10	1.09740925439129\\
17	0.37576253072719\\
28	0.386738322571205\\
46	0.445653282259259\\
77	0.24059856378753\\
129	0.228456881072254\\
215	0.232429079335593\\
359	0.17773042698704\\
599	0.223297636483726\\
1000	0.10988181794422\\
};
\addplot [color=mycolor1, mark=x, mark options={solid, mycolor1}, forget plot]
  table[row sep=crcr]{%
10	1.69154367939659\\
17	1.469218732947\\
28	2.2748664618939\\
46	0.826828283256273\\
77	0.622353177553582\\
129	1.13493877323086\\
215	0.352026761212496\\
359	0.277578266607443\\
599	0.177383355232517\\
1000	0.107311088017861\\
};
\addplot [color=mycolor2, mark=x, mark options={solid, mycolor2}, forget plot]
  table[row sep=crcr]{%
10	1.65932567692637\\
17	1.47239150857317\\
28	2.31728840709532\\
46	0.761891427127695\\
77	0.645335533034608\\
129	0.883160159439012\\
215	0.321607332030641\\
359	0.251083050967246\\
599	0.175423468557221\\
1000	0.127649765023752\\
};
\addplot [color=blue!75!mycolor2, mark=x, mark options={solid, blue!75!mycolor2}, forget plot]
  table[row sep=crcr]{%
10	2.78311692956291\\
17	4.73828286450209\\
28	1.17532253492861\\
46	1.11950185353689\\
77	0.532015229463012\\
129	0.472347097971256\\
215	0.315597190524556\\
359	0.323297180533752\\
599	0.157176796277853\\
1000	0.11638931373738\\
};
\addplot [color=mycolor3, mark=x, mark options={solid, mycolor3}, forget plot]
  table[row sep=crcr]{%
10	11.6963507381542\\
17	5.95272488922334\\
28	1.91970789154868\\
46	0.953116970395082\\
77	0.80962047253072\\
129	0.699827740742511\\
215	0.630357548211277\\
359	0.234372431000693\\
599	0.21570909382165\\
1000	0.100959437847085\\
};
\addplot [color=mycolor4, mark=x, mark options={solid, mycolor4}, forget plot]
  table[row sep=crcr]{%
10	2.08462021196314\\
17	6.61838829392211\\
28	6.39489817687023\\
46	1.62398893076642\\
77	0.825740874771883\\
129	0.915899987614043\\
215	0.35746579939562\\
359	0.282698522710781\\
599	0.207892866264634\\
1000	0.146215152462534\\
};
\addplot [color=green!60!mycolor4, mark=x, mark options={solid, green!60!mycolor4}, forget plot]
  table[row sep=crcr]{%
10	6.64583972615359\\
17	2.22808651956753\\
28	4.09804951332721\\
46	2.27034223044446\\
77	0.676645440585398\\
129	0.606295032098996\\
215	0.422280299887453\\
359	0.329778635319303\\
599	0.174720278236496\\
1000	0.140502704070484\\
};
\addplot [color=mycolor5, mark=x, mark options={solid, mycolor5}, forget plot]
  table[row sep=crcr]{%
10	2.1156354499797\\
17	2.89917432213915\\
28	2.60163039969399\\
46	2.86478523400473\\
77	1.96723028257364\\
129	0.919557574416848\\
215	0.95768972074625\\
359	0.293126603792016\\
599	0.286892120924447\\
1000	0.16508351922976\\
};
\addplot [color=mycolor6, mark=x, mark options={solid, mycolor6}, forget plot]
  table[row sep=crcr]{%
10	1.18189346923217\\
17	1.29049833034776\\
28	1.45356921467568\\
46	1.4026787135195\\
77	1.16914166320067\\
129	1.07936611319671\\
215	1.14669508992306\\
359	0.755367432293295\\
599	0.604527349237838\\
1000	0.275669413629641\\
};
\addplot [color=mycolor7, mark=x, mark options={solid, mycolor7}, forget plot]
  table[row sep=crcr]{%
10	0.755873386293018\\
17	0.720085312190134\\
28	0.780220464709385\\
46	0.773583198405047\\
77	0.711292481348803\\
129	0.739426809918998\\
215	0.732405253373259\\
359	0.652329886587669\\
599	0.583815806310321\\
1000	0.370841166884805\\
};
\addplot [color=red, mark=x, mark options={solid, red}, forget plot]
  table[row sep=crcr]{%
10	0.83820249888641\\
17	0.775610895575268\\
28	0.694805711201392\\
46	0.617352471644866\\
77	0.55769110686224\\
129	0.495318452281601\\
215	0.455575677058383\\
359	0.416717843712617\\
599	0.380103559934244\\
1000	0.340142190333747\\
};
\end{axis}
\end{tikzpicture}%
\else
\fi
\caption{Visualization of the test MSE ($y$-axis) over the number of greedily selected centers ($x$-axis) for the first six datasets. 
The black line shows the 2L-VKOGA, while the colored lines show the use of standard kernels with different length scale parameters $\varepsilon$.}
\label{fig:ml_datasets_results_1}
\end{figure}

\begin{figure}[h]
\centering
\setlength\fwidth{.4\textwidth}
\ifplots
%
%
\definecolor{mycolor1}{rgb}{1.00000,0.00000,0.60000}%
\definecolor{mycolor2}{rgb}{0.80000,0.00000,1.00000}%
\definecolor{mycolor3}{rgb}{0.00000,0.40000,1.00000}%
\definecolor{mycolor4}{rgb}{0.00000,1.00000,1.00000}%
\definecolor{mycolor5}{rgb}{0.20000,1.00000,0.00000}%
\definecolor{mycolor6}{rgb}{0.80000,1.00000,0.00000}%
\definecolor{mycolor7}{rgb}{1.00000,0.60000,0.00000}%
\begin{tikzpicture}

\begin{axis}[%
width=0.951\fwidth,
height=0.75\fwidth,
at={(0\fwidth,0\fwidth)},
scale only axis,
xmode=log,
xmin=10,
xmax=1000,
xminorticks=true,
ymode=log,
ymin=0.0450116305727643,
ymax=10.7118795589394,
yminorticks=true,
axis background/.style={fill=white},
title style={font=\bfseries},
title={$\text{online}\_\text{v}\text{ideo}$},
axis x line*=bottom,
axis y line*=left
]
\addplot [color=black, mark=x, mark options={solid, black}, forget plot]
  table[row sep=crcr]{%
10	0.176413125454326\\
17	0.189937656137508\\
28	0.386537685072048\\
46	0.211208986009364\\
77	0.300229575141678\\
129	0.457951428806252\\
215	0.295918525061362\\
359	0.374452538620403\\
599	0.205199742920705\\
1000	0.0983270956887856\\
};
\addplot [color=mycolor1, mark=x, mark options={solid, mycolor1}, forget plot]
  table[row sep=crcr]{%
10	6.15402703238756\\
17	3.05365355038645\\
28	2.33413265029603\\
46	1.65481341114079\\
77	1.06368284256166\\
129	0.674046275638423\\
215	0.338942729913826\\
359	0.14316694470002\\
599	0.0870930102368362\\
1000	0.0458530426915516\\
};
\addplot [color=mycolor2, mark=x, mark options={solid, mycolor2}, forget plot]
  table[row sep=crcr]{%
10	1.42450256790559\\
17	3.23764743651072\\
28	2.34913012903541\\
46	2.07863018091569\\
77	1.32503422276776\\
129	0.475524523407528\\
215	0.353285747424675\\
359	0.156945781480944\\
599	0.0926862608324339\\
1000	0.0450116305727643\\
};
\addplot [color=blue!75!mycolor2, mark=x, mark options={solid, blue!75!mycolor2}, forget plot]
  table[row sep=crcr]{%
10	8.19732584003393\\
17	2.61633610095293\\
28	1.82859082201734\\
46	1.67702394973176\\
77	1.3321833086957\\
129	0.570711570828431\\
215	0.295619646365536\\
359	0.150615489851029\\
599	0.0892107366678652\\
1000	0.0488390198287914\\
};
\addplot [color=mycolor3, mark=x, mark options={solid, mycolor3}, forget plot]
  table[row sep=crcr]{%
10	2.89798561272257\\
17	1.98355898045555\\
28	1.09742264393288\\
46	1.79681422832659\\
77	1.04493363031087\\
129	0.565538158618477\\
215	0.308149806715524\\
359	0.146442946665273\\
599	0.095583659250617\\
1000	0.0482084451241182\\
};
\addplot [color=mycolor4, mark=x, mark options={solid, mycolor4}, forget plot]
  table[row sep=crcr]{%
10	8.77728061277286\\
17	3.7949845093264\\
28	1.61152379497006\\
46	1.40604998307207\\
77	0.866058309345852\\
129	0.475632188139658\\
215	0.335144658639348\\
359	0.140715286833052\\
599	0.0994630083238966\\
1000	0.0468337035775679\\
};
\addplot [color=green!60!mycolor4, mark=x, mark options={solid, green!60!mycolor4}, forget plot]
  table[row sep=crcr]{%
10	6.4737055032042\\
17	2.83904589875906\\
28	1.23359587320565\\
46	0.787413724045028\\
77	1.00959702948746\\
129	0.633197525294283\\
215	0.450166180943131\\
359	0.168327981522176\\
599	0.0951329606928533\\
1000	0.0500374730771615\\
};
\addplot [color=mycolor5, mark=x, mark options={solid, mycolor5}, forget plot]
  table[row sep=crcr]{%
10	10.7118795589394\\
17	5.57334259422025\\
28	3.08312965339445\\
46	1.42994238754747\\
77	0.335842019172929\\
129	0.311179524495181\\
215	0.253127095702642\\
359	0.183990596174999\\
599	0.0978053205549424\\
1000	0.054405167905977\\
};
\addplot [color=mycolor6, mark=x, mark options={solid, mycolor6}, forget plot]
  table[row sep=crcr]{%
10	1.71829195250594\\
17	2.82467497297519\\
28	2.65079593720346\\
46	2.03136117891128\\
77	1.44466746272182\\
129	0.889909870227541\\
215	0.420503279489255\\
359	0.174946524930757\\
599	0.0802025537894473\\
1000	0.0536562573885547\\
};
\addplot [color=mycolor7, mark=x, mark options={solid, mycolor7}, forget plot]
  table[row sep=crcr]{%
10	0.670495367704181\\
17	0.612054610989302\\
28	0.54188848303903\\
46	0.508753462709656\\
77	0.526164123505443\\
129	0.510038361211008\\
215	0.509924367698627\\
359	0.457189552897346\\
599	0.349497936466217\\
1000	0.251134027515205\\
};
\addplot [color=red, mark=x, mark options={solid, red}, forget plot]
  table[row sep=crcr]{%
10	0.895903417283368\\
17	0.871192999311085\\
28	0.820537528608872\\
46	0.741651145136117\\
77	0.668073319103475\\
129	0.566813308969948\\
215	0.453861782983667\\
359	0.358544080690355\\
599	0.287588736561568\\
1000	0.251571769995054\\
};
\end{axis}
\end{tikzpicture}%
%
%
\definecolor{mycolor1}{rgb}{1.00000,0.00000,0.60000}%
\definecolor{mycolor2}{rgb}{0.80000,0.00000,1.00000}%
\definecolor{mycolor3}{rgb}{0.00000,0.40000,1.00000}%
\definecolor{mycolor4}{rgb}{0.00000,1.00000,1.00000}%
\definecolor{mycolor5}{rgb}{0.20000,1.00000,0.00000}%
\definecolor{mycolor6}{rgb}{0.80000,1.00000,0.00000}%
\definecolor{mycolor7}{rgb}{1.00000,0.60000,0.00000}%
\begin{tikzpicture}

\begin{axis}[%
width=0.951\fwidth,
height=0.75\fwidth,
at={(0\fwidth,0\fwidth)},
scale only axis,
xmode=log,
xmin=10,
xmax=1000,
xminorticks=true,
ymode=log,
ymin=0.0367835213144311,
ymax=2.21833084822089,
yminorticks=true,
axis background/.style={fill=white},
title style={font=\bfseries},
title={diamonds},
axis x line*=bottom,
axis y line*=left
]
\addplot [color=black, mark=x, mark options={solid, black}, forget plot]
  table[row sep=crcr]{%
10	0.240573370125126\\
17	0.12200713858717\\
28	0.0855254072098582\\
46	0.082740295848803\\
77	0.0799559315541836\\
129	0.0719671818550044\\
215	0.0762213016290421\\
359	0.158225135396179\\
599	0.0367835213144311\\
1000	0.0407064871239236\\
};
\addplot [color=mycolor1, mark=x, mark options={solid, mycolor1}, forget plot]
  table[row sep=crcr]{%
10	1.43351451381905\\
17	0.571198663710978\\
28	0.29082662160912\\
46	0.164623085360588\\
77	0.132400787906072\\
129	0.10895052022852\\
215	0.136313599111721\\
359	0.104780657480614\\
599	0.063190597520119\\
1000	0.0454262990181216\\
};
\addplot [color=mycolor2, mark=x, mark options={solid, mycolor2}, forget plot]
  table[row sep=crcr]{%
10	0.802324483690249\\
17	0.559855207541786\\
28	0.361236986017587\\
46	0.13704334843603\\
77	0.110601924840667\\
129	0.115964426564007\\
215	0.0980596436177043\\
359	0.079803034212733\\
599	0.0598531617172287\\
1000	0.0461235039913569\\
};
\addplot [color=blue!75!mycolor2, mark=x, mark options={solid, blue!75!mycolor2}, forget plot]
  table[row sep=crcr]{%
10	0.982876935563361\\
17	0.493497528349938\\
28	0.261589888743255\\
46	0.129006901307168\\
77	0.120962370720412\\
129	0.116680539822402\\
215	0.121255645629766\\
359	0.0841377996373424\\
599	0.0637679831611155\\
1000	0.0438760376219014\\
};
\addplot [color=mycolor3, mark=x, mark options={solid, mycolor3}, forget plot]
  table[row sep=crcr]{%
10	0.934586264394735\\
17	0.591772358293344\\
28	0.338753015616303\\
46	0.106812827724822\\
77	0.105450142300742\\
129	0.118891422698744\\
215	0.103432784419832\\
359	0.0808372929247578\\
599	0.0647145260801564\\
1000	0.0465863498449148\\
};
\addplot [color=mycolor4, mark=x, mark options={solid, mycolor4}, forget plot]
  table[row sep=crcr]{%
10	1.06635977984382\\
17	0.708868833973856\\
28	0.495936018134841\\
46	0.294916802145839\\
77	0.109012538670174\\
129	0.121808498742244\\
215	0.103221836483878\\
359	0.0692872968217196\\
599	0.0678198495762568\\
1000	0.0403809205569338\\
};
\addplot [color=green!60!mycolor4, mark=x, mark options={solid, green!60!mycolor4}, forget plot]
  table[row sep=crcr]{%
10	1.6733879282915\\
17	0.891490514238813\\
28	0.617485041388555\\
46	0.288061566547997\\
77	0.180531024070435\\
129	0.120355268852493\\
215	0.100521977579266\\
359	0.0790121921045466\\
599	0.0565223863928407\\
1000	0.0385233777656855\\
};
\addplot [color=mycolor5, mark=x, mark options={solid, mycolor5}, forget plot]
  table[row sep=crcr]{%
10	2.21833084822089\\
17	1.84033869353249\\
28	1.14295506261073\\
46	1.00542803566877\\
77	0.525194039063972\\
129	0.229850173227322\\
215	0.131026635364418\\
359	0.108375298529188\\
599	0.0702411291377215\\
1000	0.0482899112289138\\
};
\addplot [color=mycolor6, mark=x, mark options={solid, mycolor6}, forget plot]
  table[row sep=crcr]{%
10	0.905582656145723\\
17	0.981028934223239\\
28	1.26338653885676\\
46	1.78841041747551\\
77	1.32080118389534\\
129	0.783858397347821\\
215	0.489162915268373\\
359	0.281737917064452\\
599	0.155678171865414\\
1000	0.0873353377662929\\
};
\addplot [color=mycolor7, mark=x, mark options={solid, mycolor7}, forget plot]
  table[row sep=crcr]{%
10	0.931706800100816\\
17	0.888950478524699\\
28	0.848384598976185\\
46	0.772622728326995\\
77	0.683898521447348\\
129	0.603291681027421\\
215	0.592799334258232\\
359	0.661340415677796\\
599	0.545773187412953\\
1000	0.310010914475444\\
};
\addplot [color=red, mark=x, mark options={solid, red}, forget plot]
  table[row sep=crcr]{%
10	0.981566195956742\\
17	0.965553153975901\\
28	0.954714274545941\\
46	0.931089294122619\\
77	0.885695545042823\\
129	0.836703438558452\\
215	0.774735122717196\\
359	0.682320830227741\\
599	0.564723494445937\\
1000	0.464003599428947\\
};
\end{axis}
\end{tikzpicture}%
%
%
\definecolor{mycolor1}{rgb}{1.00000,0.00000,0.60000}%
\definecolor{mycolor2}{rgb}{0.80000,0.00000,1.00000}%
\definecolor{mycolor3}{rgb}{0.00000,0.40000,1.00000}%
\definecolor{mycolor4}{rgb}{0.00000,1.00000,1.00000}%
\definecolor{mycolor5}{rgb}{0.20000,1.00000,0.00000}%
\definecolor{mycolor6}{rgb}{0.80000,1.00000,0.00000}%
\definecolor{mycolor7}{rgb}{1.00000,0.60000,0.00000}%
\begin{tikzpicture}

\begin{axis}[%
width=0.951\fwidth,
height=0.75\fwidth,
at={(0\fwidth,0\fwidth)},
scale only axis,
xmode=log,
xmin=10,
xmax=1000,
xminorticks=true,
ymode=log,
ymin=0.30441056462173,
ymax=0.993015763611574,
yminorticks=true,
axis background/.style={fill=white},
title style={font=\bfseries},
title={stock},
axis x line*=bottom,
axis y line*=left
]
\addplot [color=black, mark=x, mark options={solid, black}, forget plot]
  table[row sep=crcr]{%
10	0.71140177613704\\
17	0.878751192367983\\
28	0.993015763611574\\
46	0.746367286044738\\
77	0.787298320480422\\
129	0.569453780397645\\
215	0.487678917657691\\
359	0.374931702456788\\
599	0.360400032024093\\
1000	0.30644050382806\\
};
\addplot [color=mycolor1, mark=x, mark options={solid, mycolor1}, forget plot]
  table[row sep=crcr]{%
10	0.793662713331918\\
17	0.596775602743769\\
28	0.618829365434077\\
46	0.607310362920604\\
77	0.634993445475134\\
129	0.607702662297172\\
215	0.4731405464957\\
359	0.410228716945137\\
599	0.359252837376395\\
1000	0.315276681214166\\
};
\addplot [color=mycolor2, mark=x, mark options={solid, mycolor2}, forget plot]
  table[row sep=crcr]{%
10	0.749168640325551\\
17	0.70445673726983\\
28	0.674966936501188\\
46	0.58713416802542\\
77	0.516036109078575\\
129	0.486140901685594\\
215	0.461129442402471\\
359	0.405145980892752\\
599	0.359171533908265\\
1000	0.315598313328545\\
};
\addplot [color=blue!75!mycolor2, mark=x, mark options={solid, blue!75!mycolor2}, forget plot]
  table[row sep=crcr]{%
10	0.740499504364895\\
17	0.478670325600025\\
28	0.541708861852376\\
46	0.570307378133853\\
77	0.658042492134326\\
129	0.522791193216951\\
215	0.432382227775517\\
359	0.370775777652826\\
599	0.336149835793984\\
1000	0.311380329105747\\
};
\addplot [color=mycolor3, mark=x, mark options={solid, mycolor3}, forget plot]
  table[row sep=crcr]{%
10	0.740452245211808\\
17	0.48265622938729\\
28	0.671123176449563\\
46	0.519265876364698\\
77	0.565795506522175\\
129	0.484913171828662\\
215	0.484746096472925\\
359	0.382360977924056\\
599	0.33874413221654\\
1000	0.312479349125787\\
};
\addplot [color=mycolor4, mark=x, mark options={solid, mycolor4}, forget plot]
  table[row sep=crcr]{%
10	0.620316634925227\\
17	0.608504492418745\\
28	0.64018460464611\\
46	0.674599897300886\\
77	0.552412179046407\\
129	0.535462858045749\\
215	0.459510197454519\\
359	0.386729349104887\\
599	0.338831686065455\\
1000	0.30441056462173\\
};
\addplot [color=green!60!mycolor4, mark=x, mark options={solid, green!60!mycolor4}, forget plot]
  table[row sep=crcr]{%
10	0.68467066289771\\
17	0.733252745634975\\
28	0.584847412883017\\
46	0.596504381514368\\
77	0.568903220835936\\
129	0.488587516493889\\
215	0.436881408192994\\
359	0.387422334508024\\
599	0.339925778490391\\
1000	0.317741037366741\\
};
\addplot [color=mycolor5, mark=x, mark options={solid, mycolor5}, forget plot]
  table[row sep=crcr]{%
10	0.616981524597844\\
17	0.687149374393312\\
28	0.596401664694872\\
46	0.534538397705547\\
77	0.581286347769694\\
129	0.522159517392671\\
215	0.468107720991698\\
359	0.408805030839278\\
599	0.347450002543178\\
1000	0.306577236767886\\
};
\addplot [color=mycolor6, mark=x, mark options={solid, mycolor6}, forget plot]
  table[row sep=crcr]{%
10	0.785707574116528\\
17	0.705873265663493\\
28	0.623856534825795\\
46	0.548744116607408\\
77	0.533212012152653\\
129	0.532558302235261\\
215	0.471121501978407\\
359	0.396688461956252\\
599	0.352598474006852\\
1000	0.31717106483846\\
};
\addplot [color=mycolor7, mark=x, mark options={solid, mycolor7}, forget plot]
  table[row sep=crcr]{%
10	0.868720357153526\\
17	0.814338573180101\\
28	0.788878770253226\\
46	0.698910384082192\\
77	0.67528647759038\\
129	0.599304366904591\\
215	0.551429647077603\\
359	0.502045613095294\\
599	0.435625802053078\\
1000	0.369451152201916\\
};
\addplot [color=red, mark=x, mark options={solid, red}, forget plot]
  table[row sep=crcr]{%
10	0.981196404119784\\
17	0.975625880132653\\
28	0.960215225234792\\
46	0.924824117657844\\
77	0.901319591944016\\
129	0.868497348323627\\
215	0.81679228637978\\
359	0.756944260166649\\
599	0.694156661838619\\
1000	0.612733106732451\\
};
\end{axis}
\end{tikzpicture}%
%
%
\definecolor{mycolor1}{rgb}{1.00000,0.00000,0.60000}%
\definecolor{mycolor2}{rgb}{0.80000,0.00000,1.00000}%
\definecolor{mycolor3}{rgb}{0.00000,0.40000,1.00000}%
\definecolor{mycolor4}{rgb}{0.00000,1.00000,1.00000}%
\definecolor{mycolor5}{rgb}{0.20000,1.00000,0.00000}%
\definecolor{mycolor6}{rgb}{0.80000,1.00000,0.00000}%
\definecolor{mycolor7}{rgb}{1.00000,0.60000,0.00000}%
\begin{tikzpicture}

\begin{axis}[%
width=0.951\fwidth,
height=0.75\fwidth,
at={(0\fwidth,0\fwidth)},
scale only axis,
xmode=log,
xmin=10,
xmax=1000,
xminorticks=true,
ymode=log,
ymin=0.01,
ymax=1,
yminorticks=true,
axis background/.style={fill=white},
title style={font=\bfseries},
title={sarcos},
axis x line*=bottom,
axis y line*=left
]
\addplot [color=black, mark=x, mark options={solid, black}, forget plot]
  table[row sep=crcr]{%
10	0.26828652741768\\
17	0.233701710941026\\
28	0.113079922244814\\
46	0.122216390679377\\
77	0.0857903381128407\\
129	0.0764583381294211\\
215	0.065445417290986\\
359	0.0531955009958917\\
599	0.0425786255872009\\
1000	0.0346904484442486\\
};
\addplot [color=mycolor1, mark=x, mark options={solid, mycolor1}, forget plot]
  table[row sep=crcr]{%
10	0.358227459777781\\
17	0.193155817630969\\
28	0.160227006999473\\
46	0.110565745485305\\
77	0.0673878706311983\\
129	0.0598919409629234\\
215	0.0529002751104808\\
359	0.0362945952243126\\
599	0.0315655773705287\\
1000	0.0233400407032891\\
};
\addplot [color=mycolor2, mark=x, mark options={solid, mycolor2}, forget plot]
  table[row sep=crcr]{%
10	0.342266831661334\\
17	0.225725954514857\\
28	0.163667118083908\\
46	0.120634174469073\\
77	0.0805215482426015\\
129	0.0665724459732887\\
215	0.0503958203626872\\
359	0.041389261557818\\
599	0.0310105542298956\\
1000	0.0230772307187235\\
};
\addplot [color=blue!75!mycolor2, mark=x, mark options={solid, blue!75!mycolor2}, forget plot]
  table[row sep=crcr]{%
10	0.318200374041252\\
17	0.183011366455074\\
28	0.164317563655086\\
46	0.11102226379261\\
77	0.0674013283566983\\
129	0.0688893249817086\\
215	0.0498219583667738\\
359	0.0386202608984622\\
599	0.0301251767980139\\
1000	0.0232134289325531\\
};
\addplot [color=mycolor3, mark=x, mark options={solid, mycolor3}, forget plot]
  table[row sep=crcr]{%
10	0.379935440524501\\
17	0.19321602772212\\
28	0.199593454116217\\
46	0.126650662099369\\
77	0.09280885889805\\
129	0.0639569166660652\\
215	0.0494976701467517\\
359	0.040563150653321\\
599	0.0306760284327617\\
1000	0.0232481926552267\\
};
\addplot [color=mycolor4, mark=x, mark options={solid, mycolor4}, forget plot]
  table[row sep=crcr]{%
10	0.527713354377064\\
17	0.262045777898058\\
28	0.152685222694342\\
46	0.14395207922977\\
77	0.0834899307202362\\
129	0.0599297007676577\\
215	0.0534045944489278\\
359	0.0413987643924726\\
599	0.029477005411156\\
1000	0.023888049950378\\
};
\addplot [color=green!60!mycolor4, mark=x, mark options={solid, green!60!mycolor4}, forget plot]
  table[row sep=crcr]{%
10	0.427216669955222\\
17	0.331181466874258\\
28	0.196107825934088\\
46	0.132885907080492\\
77	0.103542049345023\\
129	0.0761671421995425\\
215	0.0522554042273126\\
359	0.0430926862424703\\
599	0.0317379331259131\\
1000	0.0240525112106086\\
};
\addplot [color=mycolor5, mark=x, mark options={solid, mycolor5}, forget plot]
  table[row sep=crcr]{%
10	0.671858329936798\\
17	0.382437965701036\\
28	0.317873048831573\\
46	0.188325674130999\\
77	0.136907746571497\\
129	0.0870615993360098\\
215	0.0634506493613974\\
359	0.0448148894676304\\
599	0.0350951951430085\\
1000	0.026291425958144\\
};
\addplot [color=mycolor6, mark=x, mark options={solid, mycolor6}, forget plot]
  table[row sep=crcr]{%
10	0.697255864750033\\
17	0.58526841095426\\
28	0.508364619275951\\
46	0.435737807344042\\
77	0.293679019019978\\
129	0.185375916744006\\
215	0.124274605882666\\
359	0.0748611341271503\\
599	0.0533609873082292\\
1000	0.0366767356268157\\
};
\addplot [color=mycolor7, mark=x, mark options={solid, mycolor7}, forget plot]
  table[row sep=crcr]{%
10	0.904392669075579\\
17	0.855643604688827\\
28	0.804109406223936\\
46	0.746295182916051\\
77	0.668102754535315\\
129	0.563130652382337\\
215	0.457136014434669\\
359	0.319432730545432\\
599	0.21900043448853\\
1000	0.127730586386118\\
};
\addplot [color=red, mark=x, mark options={solid, red}, forget plot]
  table[row sep=crcr]{%
10	0.976257566799321\\
17	0.966566495875892\\
28	0.953641847357894\\
46	0.937725348474341\\
77	0.915483547923745\\
129	0.877998896469234\\
215	0.840070276893802\\
359	0.773367865095099\\
599	0.700990808361421\\
1000	0.602856632611724\\
};
\end{axis}
\end{tikzpicture}%
%
%
\definecolor{mycolor1}{rgb}{1.00000,0.00000,0.60000}%
\definecolor{mycolor2}{rgb}{0.80000,0.00000,1.00000}%
\definecolor{mycolor3}{rgb}{0.00000,0.40000,1.00000}%
\definecolor{mycolor4}{rgb}{0.00000,1.00000,1.00000}%
\definecolor{mycolor5}{rgb}{0.20000,1.00000,0.00000}%
\definecolor{mycolor6}{rgb}{0.80000,1.00000,0.00000}%
\definecolor{mycolor7}{rgb}{1.00000,0.60000,0.00000}%
\begin{tikzpicture}

\begin{axis}[%
width=0.951\fwidth,
height=0.75\fwidth,
at={(0\fwidth,0\fwidth)},
scale only axis,
xmode=log,
xmin=10,
xmax=1000,
xminorticks=true,
ymode=log,
ymin=0.001,
ymax=1,
yminorticks=true,
axis background/.style={fill=white},
title style={font=\bfseries},
title={$\text{query}\_\text{a}\text{gg}\_\text{c}\text{ount}$},
axis x line*=bottom,
axis y line*=left
]
\addplot [color=black, mark=x, mark options={solid, black}, forget plot]
  table[row sep=crcr]{%
10	0.278920527501315\\
17	0.0913773522353813\\
28	0.0728482411229075\\
46	0.0380674891256041\\
77	0.0282757364307657\\
129	0.0186567836275219\\
215	0.0167324587918382\\
359	0.00933544426249954\\
599	0.00645306034842195\\
1000	0.00478009180954343\\
};
\addplot [color=mycolor1, mark=x, mark options={solid, mycolor1}, forget plot]
  table[row sep=crcr]{%
10	0.0967684756800368\\
17	0.13050061410019\\
28	0.0506068152587258\\
46	0.0254708034312771\\
77	0.0135316545816748\\
129	0.00875238443285514\\
215	0.0066793464918297\\
359	0.00486787001129551\\
599	0.00323106839198579\\
1000	0.00237150624825376\\
};
\addplot [color=mycolor2, mark=x, mark options={solid, mycolor2}, forget plot]
  table[row sep=crcr]{%
10	0.119830835991982\\
17	0.108115346896181\\
28	0.0503950460952773\\
46	0.0225782683369944\\
77	0.0159560879672142\\
129	0.00951303838914771\\
215	0.00669621006538111\\
359	0.00449459917236226\\
599	0.00330611617622218\\
1000	0.00244600263242521\\
};
\addplot [color=blue!75!mycolor2, mark=x, mark options={solid, blue!75!mycolor2}, forget plot]
  table[row sep=crcr]{%
10	0.138180693476498\\
17	0.0868208948009182\\
28	0.0475486367691238\\
46	0.0241843790893562\\
77	0.0119541378244635\\
129	0.00937820866247179\\
215	0.00676699881167107\\
359	0.00446459797072013\\
599	0.00337409437964302\\
1000	0.00245885375385847\\
};
\addplot [color=mycolor3, mark=x, mark options={solid, mycolor3}, forget plot]
  table[row sep=crcr]{%
10	0.103509752376338\\
17	0.0817824824016097\\
28	0.0627666913846203\\
46	0.0236674133694107\\
77	0.0174473708310282\\
129	0.00810460408051823\\
215	0.00621342876285095\\
359	0.00495549267041423\\
599	0.00312804979614788\\
1000	0.00239169266487798\\
};
\addplot [color=mycolor4, mark=x, mark options={solid, mycolor4}, forget plot]
  table[row sep=crcr]{%
10	0.0785421161843669\\
17	0.111389361030947\\
28	0.0608031629175977\\
46	0.0182354163458091\\
77	0.0124467075843315\\
129	0.00891638052758707\\
215	0.00573596831484961\\
359	0.0042744711964003\\
599	0.00337218810086686\\
1000	0.00250478668973289\\
};
\addplot [color=green!60!mycolor4, mark=x, mark options={solid, green!60!mycolor4}, forget plot]
  table[row sep=crcr]{%
10	0.211906039759258\\
17	0.123313390481886\\
28	0.0751509290074638\\
46	0.0323783565007068\\
77	0.0210361992260647\\
129	0.00939636625366074\\
215	0.00676373442515024\\
359	0.00476548305193828\\
599	0.00341236067255259\\
1000	0.00229426885945621\\
};
\addplot [color=mycolor5, mark=x, mark options={solid, mycolor5}, forget plot]
  table[row sep=crcr]{%
10	0.422125550540821\\
17	0.34537935698395\\
28	0.135811902415278\\
46	0.0726985478158326\\
77	0.0242802426827879\\
129	0.0135982846869018\\
215	0.0070855351377214\\
359	0.00496566164759942\\
599	0.00350509322416787\\
1000	0.00241363920112356\\
};
\addplot [color=mycolor6, mark=x, mark options={solid, mycolor6}, forget plot]
  table[row sep=crcr]{%
10	0.434130110651414\\
17	0.417197802784004\\
28	0.258777420725263\\
46	0.147702126876368\\
77	0.0576598197773609\\
129	0.0258172545327029\\
215	0.0129182989693198\\
359	0.00646713610080864\\
599	0.00414139692585789\\
1000	0.00282229760925802\\
};
\addplot [color=mycolor7, mark=x, mark options={solid, mycolor7}, forget plot]
  table[row sep=crcr]{%
10	0.546604308326962\\
17	0.462662425118106\\
28	0.413463230737193\\
46	0.368903509398961\\
77	0.23675900259775\\
129	0.126913915251947\\
215	0.0536493450029557\\
359	0.0216019170624971\\
599	0.00942664120920444\\
1000	0.00465227745391109\\
};
\addplot [color=red, mark=x, mark options={solid, red}, forget plot]
  table[row sep=crcr]{%
10	0.840699651046388\\
17	0.785743724580451\\
28	0.725749110382909\\
46	0.646851407490164\\
77	0.560225153245957\\
129	0.463671290574043\\
215	0.373803606267993\\
359	0.256446467422843\\
599	0.148046109152471\\
1000	0.0636096008473246\\
};
\end{axis}
\end{tikzpicture}%
%
%
\definecolor{mycolor1}{rgb}{1.00000,0.00000,0.60000}%
\definecolor{mycolor2}{rgb}{0.80000,0.00000,1.00000}%
\definecolor{mycolor3}{rgb}{0.00000,0.40000,1.00000}%
\definecolor{mycolor4}{rgb}{0.00000,1.00000,1.00000}%
\definecolor{mycolor5}{rgb}{0.20000,1.00000,0.00000}%
\definecolor{mycolor6}{rgb}{0.80000,1.00000,0.00000}%
\definecolor{mycolor7}{rgb}{1.00000,0.60000,0.00000}%
\begin{tikzpicture}

\begin{axis}[%
width=0.951\fwidth,
height=0.75\fwidth,
at={(0\fwidth,0\fwidth)},
scale only axis,
xmode=log,
xmin=10,
xmax=1000,
xminorticks=true,
ymode=log,
ymin=0.152667373432623,
ymax=3.35425226316319,
yminorticks=true,
axis background/.style={fill=white},
title style={font=\bfseries},
title={$\text{road}\_\text{n}\text{etwork}$},
axis x line*=bottom,
axis y line*=left
]
\addplot [color=black, mark=x, mark options={solid, black}, forget plot]
  table[row sep=crcr]{%
10	1.39010319500544\\
17	1.04557943661251\\
28	1.01754711362335\\
46	0.841744546670118\\
77	0.701103214197335\\
129	0.550095869851396\\
215	0.450731561647163\\
359	0.336192064011339\\
599	0.274882262485855\\
1000	0.185132327553611\\
};
\addplot [color=mycolor1, mark=x, mark options={solid, mycolor1}, forget plot]
  table[row sep=crcr]{%
10	2.88355940514954\\
17	3.35425226316319\\
28	1.09646087596263\\
46	1.27386210699161\\
77	1.01180364455189\\
129	0.740100450779944\\
215	0.498586442345621\\
359	0.353358604810697\\
599	0.253590151641001\\
1000	0.163060416508126\\
};
\addplot [color=mycolor2, mark=x, mark options={solid, mycolor2}, forget plot]
  table[row sep=crcr]{%
10	2.84662118772645\\
17	3.22510171901736\\
28	1.08490221487791\\
46	1.25425334239737\\
77	1.01071398470451\\
129	0.739212665817281\\
215	0.498709063951946\\
359	0.353393102486987\\
599	0.253669331477999\\
1000	0.163073768860639\\
};
\addplot [color=blue!75!mycolor2, mark=x, mark options={solid, blue!75!mycolor2}, forget plot]
  table[row sep=crcr]{%
10	2.47063940707557\\
17	2.01685789589099\\
28	1.21886322060983\\
46	1.05754939121395\\
77	0.793131590946824\\
129	0.767432663740669\\
215	0.567507011656511\\
359	0.36292216582627\\
599	0.253109363463471\\
1000	0.164640679101027\\
};
\addplot [color=mycolor3, mark=x, mark options={solid, mycolor3}, forget plot]
  table[row sep=crcr]{%
10	0.961861348300562\\
17	1.96215306921222\\
28	1.35335061830524\\
46	1.72140298806149\\
77	0.982965680383045\\
129	0.605086192836443\\
215	0.534161934855437\\
359	0.370317014449522\\
599	0.25959982119371\\
1000	0.153317444865653\\
};
\addplot [color=mycolor4, mark=x, mark options={solid, mycolor4}, forget plot]
  table[row sep=crcr]{%
10	1.34843479452793\\
17	1.87764906222235\\
28	1.1737825153425\\
46	1.01847130565033\\
77	0.886848926028881\\
129	0.676475982460393\\
215	0.51627872472494\\
359	0.36502704088248\\
599	0.258564986071592\\
1000	0.167673950934646\\
};
\addplot [color=green!60!mycolor4, mark=x, mark options={solid, green!60!mycolor4}, forget plot]
  table[row sep=crcr]{%
10	1.78316176723938\\
17	1.95577783117133\\
28	1.01145139273853\\
46	1.02199731101423\\
77	0.994659814648515\\
129	0.740654117787873\\
215	0.47815776272211\\
359	0.377216164950377\\
599	0.269517578176772\\
1000	0.162801633034408\\
};
\addplot [color=mycolor5, mark=x, mark options={solid, mycolor5}, forget plot]
  table[row sep=crcr]{%
10	2.15384669532063\\
17	1.54119650891179\\
28	1.53669229359549\\
46	0.997045330347858\\
77	0.910960331949531\\
129	0.741896202251458\\
215	0.460220542393935\\
359	0.415746087932765\\
599	0.263169168958436\\
1000	0.160238733020358\\
};
\addplot [color=mycolor6, mark=x, mark options={solid, mycolor6}, forget plot]
  table[row sep=crcr]{%
10	0.826037724913831\\
17	1.40254759293493\\
28	1.21601230904096\\
46	0.73065055278101\\
77	0.929390173039141\\
129	0.678429964293566\\
215	0.461148445866556\\
359	0.345735511336748\\
599	0.239575108709374\\
1000	0.152667373432623\\
};
\addplot [color=mycolor7, mark=x, mark options={solid, mycolor7}, forget plot]
  table[row sep=crcr]{%
10	0.859511414450988\\
17	0.891196971838094\\
28	1.27651143645104\\
46	0.808033868982211\\
77	0.704805514487124\\
129	0.545490236969442\\
215	0.514684383576006\\
359	0.355265554016244\\
599	0.273175507073664\\
1000	0.190558501598708\\
};
\addplot [color=red, mark=x, mark options={solid, red}, forget plot]
  table[row sep=crcr]{%
10	0.862579818193516\\
17	0.797779722174206\\
28	0.817930715426352\\
46	0.828917926290937\\
77	0.706160675645608\\
129	0.625444416149221\\
215	0.483565917006231\\
359	0.383922036145352\\
599	0.293622361345053\\
1000	0.228473224584309\\
};
\end{axis}
\end{tikzpicture}%
\else
\fi
\caption{Visualization of the test MSE ($y$-axis) over the number of greedily selected centers ($x$-axis) for the last six datasets. 
The black line shows the 2L-VKOGA, while the colored lines show the use of standard kernels with different length scale parameters $\varepsilon$.}
\label{fig:ml_datasets_results_2}
\end{figure}

\begin{figure}[h]
\centering
\setlength\fwidth{.7\textwidth}
\ifplots
\input{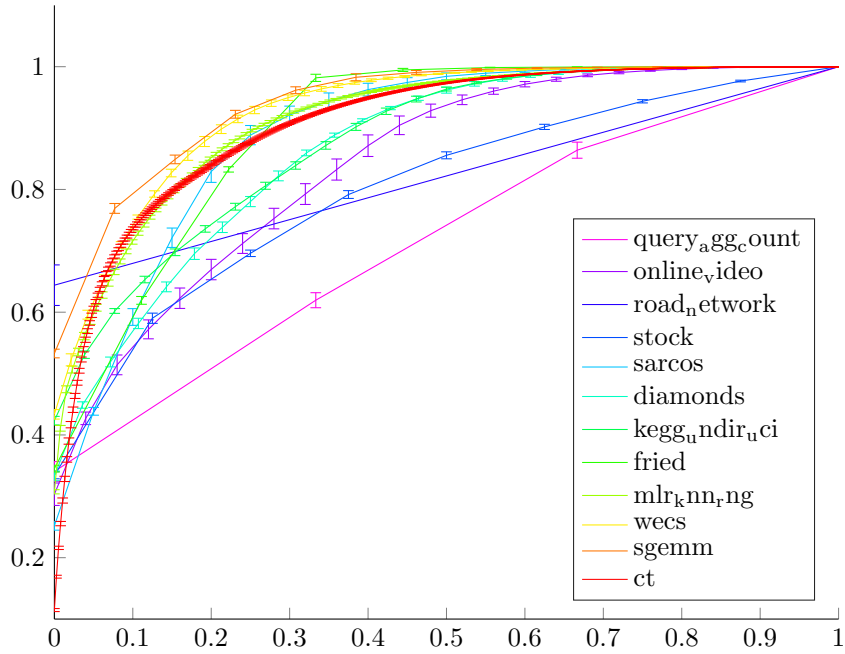}
\else
\fi
\caption{Visualization of the ratio $\sum_{i=1}^n |s_i(\mA_{\bs{\theta}})| / \sum_{i=1}^d |s_i(\mA_{\bs{\theta}})|$ (mean and standard deviation over 5 reruns) ($y$-axis) over the ratio $n/d$ of considered singular values ($x$-axis).
In conjunction with Figure \ref{fig:ml_datasets_results_1} and \ref{fig:ml_datasets_results_2} one can see that the 2L-VKOGA approach works better when the ratio of the singular approaches the value one more quickly.}
\label{fig:overview_singularvalues}
\end{figure}

\begin{table}[]
\centering 
\begin{tabular}{l|ll}
Short name & Number of examples & Number of features \\ \hline
fried & 40768 & 10 \\
sarcos & 44484 & 21 \\
ct & 53500 & 379 \\
diamonds & 53940 & 29 \\
stock & 59049 & 9 \\
kegg\_undir\_uci & 64608 & 27 \\
online\_video & 68784 & 26 \\
wecs & 72000 & 48 \\
mlr\_knn\_rng & 111753 & 132 \\
query\_agg\_count & 200000 & 4 \\
sgemm & 241600 & 14 \\
road\_network & 434874 & 2 \\
\end{tabular}
\caption{Overview on the machine learning data sets which were used.}
\label{tab:tabular_datasets}
\end{table}

\subsection{Stability of the kernel optimization} \label{subsec:num_stability}

In order to justify that the 2L-VKOGA approach indeed uses a stable data-driven kernel optimization, we employ different criteria for the kernel optimization. While Subsection \ref{subsec:num_ex_ml_data} relied on leave-one-out cross validation (i.e.\ $k$-fold cross validation for $k=64$, which was the batch size) for the optimization of the kernel, here we rerun the same experiments and make use of different values of $k$ for our cross validation, leveraging the efficient implementation described in Subsection \ref{subsec:loss_func_for_optim}. This is motivated by the fact that it is a priori unclear whether leave-one-out cross validation is a suitable choice for $k$-fold cross validation, or one should rather use a different value.

In Figure \ref{fig:stability_alignment} the results for three out of the twelve datasets are depicted. 
In the left column, the decay of the singular values is shown.
In the right column, the principal angle between subspaces according to \cite{knyazev2002principal} are plotted.
In detail, we depicted the principal angle between the subspace spanned by the first $n$ right singular vectors of the optimized matrix $\mA_{\bs{\theta}}$ for some method with the corresponding subspace of a benchmark method (for which we here used the leave-one-out cross validation method, i.e.\ $k=64$).
For a precise definition of the principal angle, see \cite[Section 1]{knyazev2002principal}.
For comparison reasons, also the corresponding quantities for the standard approach are computed, i.e.\ all the singular values are equal to one as we have $\mA_{\bs{\theta}} = \mathsf{I}_d$.
\begin{enumerate}
\item From the plots of the singular value decays we can infer that the singular value distribution is always very close to each other, in particular for large singular values.
Those large singular values are more important, as data in direction of singular vectors to larger singular values are stretched, while those in direction of smaller singular values are compressed. 
Furthermore one can clearly observe the difference of the classical approach, where all the singular values have the same value 1, i.e.\ all the directions within the dataset have equal importance.
\item From the plots of the overlap we can observe that the right singular vectors to the largest singular values of the matrix $\mA_{\boldsymbol{\theta}}$ are quite aligned (i.e.\ small angles) for all the optimization methods.
Only for larger subspaces, which however also correspond to less important singular values, the angle starts to grow.
\end{enumerate}
We remark that the plots for other datasets are qualitatively similar and showing them does not provide further insights.

Since both the (large) singular values and the corresponding singular vectors are similar we infer that the optimized matrices $\mA_{\boldsymbol{\theta}}$ are quite close to each other in the sense of detecting the same features and applying the same scalings to those features.
The differences within the smaller singular values and corresponding singular vectors actually do not negatively impact the performance of the kernel model, as the $f$-greedy selection criterion can accommodate for this. 
In fact the test error decays (i.e.\ Figure \ref{fig:ml_datasets_results_1}, \ref{fig:ml_datasets_results_2}) when using those slightly different matrices almost perfectly match (not shown here). 
We note that the optimized matrices $\mA_{\boldsymbol{\theta}}$ are not necessarily close to each other, which is related to the irrelevance of the right singular vectors for the overall kernel model as elaborated in Subsection \ref{subsubsec:analysis_first_layer}.

All in all these experiments using different $k$-fold cross validation parameters for our kernel optimization showed that the used optimization procedure is stable. 
This was desired, because we strived to obtain a data adapted kernel. 

For the experiments in Subsection \ref{subsec:num_ex_ml_data} we simply used $k=64$ (batch size) for the optimization as it is slightly more time-efficient than using another value of $k$.
However also the use of another $k$ fold cross validation parameter is not detrimental, as the time consumption for the kernel optimization is always smaller than the time required for the greedy selection, see also Table \ref{tab:tabular_results}.
This was however only possible by making use of the efficient implementation due the extended Rippa's algorithm, see Eq.\ \eqref{eq:rippa_loss}.

\begin{figure}[h]
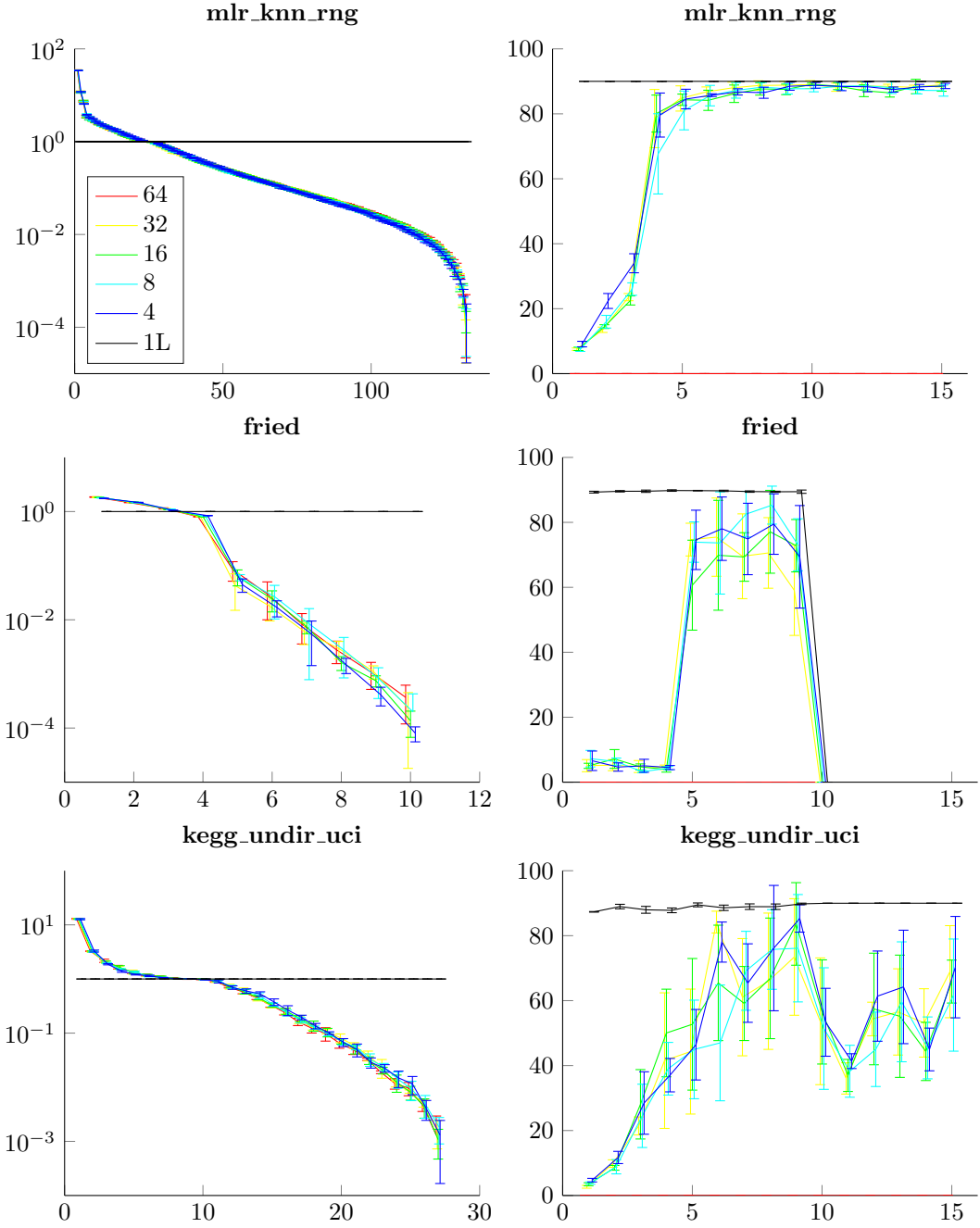

\centering
\setlength\fwidth{.4\textwidth}
\ifplots
\input{Figures/singvals_mlr_knn_rng.tex}
%
%
\definecolor{mycolor1}{rgb}{1.00000,1.00000,0.00000}%
\definecolor{mycolor2}{rgb}{0.00000,1.00000,1.00000}%
\begin{tikzpicture}

\begin{axis}[%
width=0.951\fwidth,
height=0.75\fwidth,
at={(0\fwidth,0\fwidth)},
scale only axis,
xmin=0,
xmax=16,
ymin=0,
ymax=100,
axis background/.style={fill=white},
title style={font=\bfseries},
title={mlr\_knn\_rng},
axis x line*=bottom,
axis y line*=left,
legend style={at={(0.97,0.03)}, anchor=south east, legend cell align=left, align=left, draw=white!15!black}
]
\addplot [color=red]
 plot [error bars/.cd, y dir = both, y explicit]
 table[row sep=crcr, y error plus index=2, y error minus index=3]{%
0.86	5.32496051164344e-06	4.92794106321526e-06	4.92794106321526e-06\\
1.86	1.51345257108915e-05	1.01744672065252e-05	1.01744672065252e-05\\
2.86	1.57169324666029e-05	1.11199233288062e-05	1.11199233288062e-05\\
3.86	1.84977961907862e-05	6.28324141871417e-06	6.28324141871417e-06\\
4.86	1.98347806872334e-05	4.34955563832773e-06	4.34955563832773e-06\\
5.86	2.91193955490598e-05	6.00157682129066e-06	6.00157682129066e-06\\
6.86	2.99840721709188e-05	9.31471367948689e-06	9.31471367948689e-06\\
7.86	2.64020000031451e-05	4.71247085442883e-06	4.71247085442883e-06\\
8.86	2.88328046735842e-05	4.64734648630838e-06	4.64734648630838e-06\\
9.86	3.25568180414848e-05	7.66345237934729e-06	7.66345237934729e-06\\
10.86	3.60374433512334e-05	1.15309912871453e-05	1.15309912871453e-05\\
11.86	3.7266181607265e-05	5.9245758166071e-06	5.9245758166071e-06\\
12.86	3.80630735890009e-05	4.62621892438619e-06	4.62621892438619e-06\\
13.86	4.01168945245445e-05	7.16747035767185e-06	7.16747035767185e-06\\
14.86	4.01223624066915e-05	5.07938239024952e-06	5.07938239024952e-06\\
};

\addplot [color=mycolor1]
 plot [error bars/.cd, y dir = both, y explicit]
 table[row sep=crcr, y error plus index=2, y error minus index=3]{%
0.93	7.53238582611084	0.374998241662979	0.374998241662979\\
1.93	13.5014600753784	0.815271556377411	0.815271556377411\\
2.93	23.5557041168213	1.09599018096924	1.09599018096924\\
3.93	78.541015625	8.95968437194824	8.95968437194824\\
4.93	84.7316665649414	3.88419485092163	3.88419485092163\\
5.93	86.7635879516602	1.51552736759186	1.51552736759186\\
6.93	87.9840545654297	1.76581466197968	1.76581466197968\\
7.93	88.7124786376953	1.44642794132233	1.44642794132233\\
8.93	88.8143463134766	1.18578505516052	1.18578505516052\\
9.93	89.005615234375	1.20655119419098	1.20655119419098\\
10.93	88.1957550048828	1.26319372653961	1.26319372653961\\
11.93	88.6000823974609	1.11692774295807	1.11692774295807\\
12.93	88.4247207641602	0.674286603927612	0.674286603927612\\
13.93	88.152587890625	0.656468212604523	0.656468212604523\\
14.93	88.9222640991211	0.639569640159607	0.639569640159607\\
};

\addplot [color=green]
 plot [error bars/.cd, y dir = both, y explicit]
 table[row sep=crcr, y error plus index=2, y error minus index=3]{%
1	7.63374805450439	0.528828859329224	0.528828859329224\\
2	14.4796276092529	0.575391590595245	0.575391590595245\\
3	22.5352993011475	1.39741516113281	1.39741516113281\\
4	80.0746383666992	5.69393873214722	5.69393873214722\\
5	84.3341064453125	1.7427750825882	1.7427750825882\\
6	84.1588745117188	3.10691738128662	3.10691738128662\\
7	86.1787261962891	2.68668246269226	2.68668246269226\\
8	87.661491394043	1.89548850059509	1.89548850059509\\
9	87.5260620117188	1.61534738540649	1.61534738540649\\
10	89.0672988891602	0.410112023353577	0.410112023353577\\
11	88.5258407592773	1.07805192470551	1.07805192470551\\
12	87.1090698242188	1.82009267807007	1.82009267807007\\
13	86.4640960693359	1.2996279001236	1.2996279001236\\
14	88.4047241210938	2.22595238685608	2.22595238685608\\
15	88.4383773803711	1.49258708953857	1.49258708953857\\
};

\addplot [color=mycolor2]
 plot [error bars/.cd, y dir = both, y explicit]
 table[row sep=crcr, y error plus index=2, y error minus index=3]{%
1.07	7.52194976806641	0.679533183574677	0.679533183574677\\
2.07	15.9859828948975	1.9549241065979	1.9549241065979\\
3.07	26.0387382507324	1.91136467456818	1.91136467456818\\
4.07	67.6805114746094	12.3578863143921	12.3578863143921\\
5.07	81.0373153686523	5.97173023223877	5.97173023223877\\
6.07	85.5986557006836	3.15336894989014	3.15336894989014\\
7.07	87.2156295776367	2.393070936203	2.393070936203\\
8.07	88.1892929077148	1.46403586864471	1.46403586864471\\
9.07	87.7133255004883	1.60764682292938	1.60764682292938\\
10.07	87.7313232421875	0.953594088554382	0.953594088554382\\
11.07	88.5068359375	1.23686146736145	1.23686146736145\\
12.07	88.6141891479492	1.14743995666504	1.14743995666504\\
13.07	88.0611343383789	1.81118762493134	1.81118762493134\\
14.07	87.2361221313477	1.12035059928894	1.12035059928894\\
15.07	87.1526565551758	1.71133685112	1.71133685112\\
};

\addplot [color=blue]
 plot [error bars/.cd, y dir = both, y explicit]
 table[row sep=crcr, y error plus index=2, y error minus index=3]{%
1.14	9.10523319244385	0.85183572769165	0.85183572769165\\
2.14	22.3721656799316	2.30081295967102	2.30081295967102\\
3.14	33.9875679016113	2.92663621902466	2.92663621902466\\
4.14	79.6183395385742	6.77360582351685	6.77360582351685\\
5.14	84.5434341430664	3.02357983589172	3.02357983589172\\
6.14	85.7456359863281	0.423344045877457	0.423344045877457\\
7.14	86.7536239624023	0.961765110492706	0.961765110492706\\
8.14	86.4914398193359	1.70848596096039	1.70848596096039\\
9.14	88.4914016723633	1.22741830348969	1.22741830348969\\
10.14	88.7572784423828	0.867274761199951	0.867274761199951\\
11.14	88.4263229370117	1.29987001419067	1.29987001419067\\
12.14	88.3199157714844	1.54217112064362	1.54217112064362\\
13.14	87.4690246582031	0.788071632385254	0.788071632385254\\
14.14	88.2828521728516	0.767221331596375	0.767221331596375\\
15.14	88.5755920410156	0.814777612686157	0.814777612686157\\
};

\addplot [color=black]
 plot [error bars/.cd, y dir = both, y explicit]
 table[row sep=crcr, y error plus index=2, y error minus index=3]{%
1.21	90	0	0\\
2.21	90	0	0\\
3.21	90	0	0\\
4.21	90	3.41196891895379e-06	3.41196891895379e-06\\
5.21	90	0	0\\
6.21	90	0	0\\
7.21	90	0	0\\
8.21	90	3.41196891895379e-06	3.41196891895379e-06\\
9.21	90	3.41196891895379e-06	3.41196891895379e-06\\
10.21	90	3.41196891895379e-06	3.41196891895379e-06\\
11.21	90	0	0\\
12.21	90	3.41196891895379e-06	3.41196891895379e-06\\
13.21	90	0	0\\
14.21	90	0	0\\
15.21	90	0	0\\
};

\end{axis}
\end{tikzpicture}%
%
%
\definecolor{mycolor1}{rgb}{1.00000,1.00000,0.00000}%
\definecolor{mycolor2}{rgb}{0.00000,1.00000,1.00000}%
\begin{tikzpicture}

\begin{axis}[%
width=0.951\fwidth,
height=0.75\fwidth,
at={(0\fwidth,0\fwidth)},
scale only axis,
xmin=0,
xmax=12,
ymode=log,
ymin=1e-05,
ymax=10,
yminorticks=true,
axis background/.style={fill=white},
title style={font=\bfseries},
title={fried},
axis x line*=bottom,
axis y line*=left,
legend style={at={(0.03,0.03)}, anchor=south west, legend cell align=left, align=left, draw=white!15!black}
]
\addplot [color=red]
 plot [error bars/.cd, y dir = both, y explicit]
 table[row sep=crcr, y error plus index=2, y error minus index=3]{%
0.86	1.84910273551941	0.0252321511507034	0.0252321511507034\\
1.86	1.46812796592712	0.0128739168867469	0.0128739168867469\\
2.86	1.14365649223328	0.0280170384794474	0.0280170384794474\\
3.86	0.802081406116486	0.00646516727283597	0.00646516727283597\\
4.86	0.0850364044308662	0.033416859805584	0.033416859805584\\
5.86	0.0300779156386852	0.0200807061046362	0.0200807061046362\\
6.86	0.0082957586273551	0.00472257798537612	0.00472257798537612\\
7.86	0.00279023544862866	0.00124174158554524	0.00124174158554524\\
8.86	0.00107577512972057	0.000558281608391553	0.000558281608391553\\
9.86	0.000369307730579749	0.000250082521233708	0.000250082521233708\\
};

\addplot [color=mycolor1]
 plot [error bars/.cd, y dir = both, y explicit]
 table[row sep=crcr, y error plus index=2, y error minus index=3]{%
0.93	1.83664703369141	0.0405893437564373	0.0405893437564373\\
1.93	1.49518728256226	0.030299324542284	0.030299324542284\\
2.93	1.12139022350311	0.0193845611065626	0.0193845611065626\\
3.93	0.804196357727051	0.00550323352217674	0.00550323352217674\\
4.93	0.0428102649748325	0.0278447009623051	0.0278447009623051\\
5.93	0.0178587101399899	0.0083374073728919	0.0083374073728919\\
6.93	0.00609118537977338	0.00261076889000833	0.00261076889000833\\
7.93	0.00294121308252215	0.00104304659180343	0.00104304659180343\\
8.93	0.00110579375177622	0.000304274290101603	0.000304274290101603\\
9.93	0.000234534221817739	0.000216525309951976	0.000216525309951976\\
};

\addplot [color=green]
 plot [error bars/.cd, y dir = both, y explicit]
 table[row sep=crcr, y error plus index=2, y error minus index=3]{%
1	1.82322525978088	0.0233384035527706	0.0233384035527706\\
2	1.48410594463348	0.0280719231814146	0.0280719231814146\\
3	1.09038209915161	0.00844744686037302	0.00844744686037302\\
4	0.805469870567322	0.0118987038731575	0.0118987038731575\\
5	0.0628900155425072	0.0205649752169847	0.0205649752169847\\
6	0.0241405609995127	0.0101705510169268	0.0101705510169268\\
7	0.00748347956687212	0.00195485143922269	0.00195485143922269\\
8	0.00162104947958142	0.000464589567855	0.000464589567855\\
9	0.00074857915751636	0.000174229804542847	0.000174229804542847\\
10	0.000136328948428854	6.90848464728333e-05	6.90848464728333e-05\\
};

\addplot [color=mycolor2]
 plot [error bars/.cd, y dir = both, y explicit]
 table[row sep=crcr, y error plus index=2, y error minus index=3]{%
1.07	1.79155850410461	0.0234783105552197	0.0234783105552197\\
2.07	1.46955931186676	0.0239106994122267	0.0239106994122267\\
3.07	1.0947573184967	0.0187467355281115	0.0187467355281115\\
4.07	0.824438393115997	0.00507907895371318	0.00507907895371318\\
5.07	0.0604236945509911	0.00784828793257475	0.00784828793257475\\
6.07	0.0267944987863302	0.0164422262459993	0.0164422262459993\\
7.07	0.0084362281486392	0.00765310367569327	0.00765310367569327\\
8.07	0.00278073409572244	0.00193646864499897	0.00193646864499897\\
9.07	0.000825408089440316	0.000474546948680654	0.000474546948680654\\
10.07	0.000199519723537378	0.000229540557484142	0.000229540557484142\\
};

\addplot [color=blue]
 plot [error bars/.cd, y dir = both, y explicit]
 table[row sep=crcr, y error plus index=2, y error minus index=3]{%
1.14	1.75127279758453	0.0212541036307812	0.0212541036307812\\
2.14	1.47409343719482	0.00882690586149693	0.00882690586149693\\
3.14	1.04942071437836	0.0175731088966131	0.0175731088966131\\
4.14	0.840126991271973	0.00610914500430226	0.00610914500430226\\
5.14	0.0445030704140663	0.0123560708016157	0.0123560708016157\\
6.14	0.0168748553842306	0.00555225322023034	0.00555225322023034\\
7.14	0.00545098399743438	0.00402723252773285	0.00402723252773285\\
8.14	0.00147716782521456	0.000462035997770727	0.000462035997770727\\
9.14	0.000411617191275582	0.00015523012552876	0.00015523012552876\\
10.14	8.03243601694703e-05	2.47026837314479e-05	2.47026837314479e-05\\
};

\addplot [color=black]
 plot [error bars/.cd, y dir = both, y explicit]
 table[row sep=crcr, y error plus index=2, y error minus index=3]{%
1.21	1	0	0\\
2.21	1	0	0\\
3.21	1	0	0\\
4.21	1	0	0\\
5.21	1	0	0\\
6.21	1	0	0\\
7.21	1	0	0\\
8.21	1	0	0\\
9.21	1	0	0\\
10.21	1	0	0\\
};

\end{axis}
\end{tikzpicture}%
%
%
\definecolor{mycolor1}{rgb}{1.00000,1.00000,0.00000}%
\definecolor{mycolor2}{rgb}{0.00000,1.00000,1.00000}%
\begin{tikzpicture}

\begin{axis}[%
width=0.951\fwidth,
height=0.75\fwidth,
at={(0\fwidth,0\fwidth)},
scale only axis,
xmin=0,
xmax=16,
ymin=0,
ymax=100,
axis background/.style={fill=white},
title style={font=\bfseries},
title={fried},
axis x line*=bottom,
axis y line*=left,
legend style={at={(0.97,0.03)}, anchor=south east, legend cell align=left, align=left, draw=white!15!black}
]
\addplot [color=red]
 plot [error bars/.cd, y dir = both, y explicit]
 table[row sep=crcr, y error plus index=2, y error minus index=3]{%
0.86	3.874818503391e-06	1.93741766452149e-06	1.93741766452149e-06\\
1.86	9.38317953114165e-06	5.8766850088432e-06	5.8766850088432e-06\\
2.86	1.69393861142453e-05	9.53336711972952e-06	9.53336711972952e-06\\
3.86	1.79643575393129e-05	9.2183836386539e-06	9.2183836386539e-06\\
4.86	2.47582847805461e-05	7.35946150598465e-06	7.35946150598465e-06\\
5.86	1.92610641533975e-05	3.887694219884e-06	3.887694219884e-06\\
6.86	2.29084344027797e-05	6.92205503582954e-06	6.92205503582954e-06\\
7.86	2.2245181753533e-05	7.03465002516168e-06	7.03465002516168e-06\\
8.86	2.09799472941086e-05	5.62569721296313e-06	5.62569721296313e-06\\
9.86	2.63297806668561e-05	8.41110613691853e-06	8.41110613691853e-06\\
};

\addplot [color=mycolor1]
 plot [error bars/.cd, y dir = both, y explicit]
 table[row sep=crcr, y error plus index=2, y error minus index=3]{%
0.93	5.08681774139404	1.86833477020264	1.86833477020264\\
1.93	5.25029277801514	1.69958090782166	1.69958090782166\\
2.93	4.81788730621338	1.84037327766418	1.84037327766418\\
3.93	4.57057857513428	0.697074055671692	0.697074055671692\\
4.93	74.6946868896484	5.03474950790405	5.03474950790405\\
5.93	75.4451065063477	12.0491104125977	12.0491104125977\\
6.93	69.5624465942383	13.05504322052	13.05504322052\\
7.93	70.5836410522461	10.8660707473755	10.8660707473755\\
8.93	59.0627212524414	13.8768253326416	13.8768253326416\\
9.93	2.56154125963803e-05	6.92689400239033e-06	6.92689400239033e-06\\
};

\addplot [color=green]
 plot [error bars/.cd, y dir = both, y explicit]
 table[row sep=crcr, y error plus index=2, y error minus index=3]{%
1	5.03328227996826	0.786871016025543	0.786871016025543\\
2	7.14795780181885	2.88931655883789	2.88931655883789\\
3	4.70194911956787	0.918432712554932	0.918432712554932\\
4	3.93019819259644	0.828965961933136	0.828965961933136\\
5	60.6638870239258	13.8468341827393	13.8468341827393\\
6	69.8641052246094	16.9180335998535	16.9180335998535\\
7	69.3444366455078	7.47795867919922	7.47795867919922\\
8	77.0991058349609	12.7553262710571	12.7553262710571\\
9	72.8096694946289	8.06084823608398	8.06084823608398\\
10	2.97695860353997e-05	1.10407436295645e-05	1.10407436295645e-05\\
};

\addplot [color=mycolor2]
 plot [error bars/.cd, y dir = both, y explicit]
 table[row sep=crcr, y error plus index=2, y error minus index=3]{%
1.07	7.24149465560913	2.54523205757141	2.54523205757141\\
2.07	6.32692527770996	1.09702908992767	1.09702908992767\\
3.07	3.23139190673828	0.339772820472717	0.339772820472717\\
4.07	4.32500219345093	0.505148231983185	0.505148231983185\\
5.07	73.9172897338867	6.21500635147095	6.21500635147095\\
6.07	73.6553192138672	15.7363557815552	15.7363557815552\\
7.07	82.5291442871094	7.03040647506714	7.03040647506714\\
8.07	85.3044586181641	5.84578037261963	5.84578037261963\\
9.07	73.0973281860352	8.00978851318359	8.00978851318359\\
10.07	2.91750220640097e-05	8.80195693753194e-06	8.80195693753194e-06\\
};

\addplot [color=blue]
 plot [error bars/.cd, y dir = both, y explicit]
 table[row sep=crcr, y error plus index=2, y error minus index=3]{%
1.14	6.59375476837158	3.00182962417603	3.00182962417603\\
2.14	4.66383266448975	1.27753508090973	1.27753508090973\\
3.14	5.04787349700928	2.00338196754456	2.00338196754456\\
4.14	4.49463367462158	0.6351717710495	0.6351717710495\\
5.14	74.6074142456055	9.13517475128174	9.13517475128174\\
6.14	78.0448303222656	9.76110744476318	9.76110744476318\\
7.14	74.8841934204102	10.9972801208496	10.9972801208496\\
8.14	79.4724044799805	9.30877590179443	9.30877590179443\\
9.14	69.3948287963867	15.7861309051514	15.7861309051514\\
10.14	2.44075145019451e-05	4.3506688598427e-06	4.3506688598427e-06\\
};

\addplot [color=black]
 plot [error bars/.cd, y dir = both, y explicit]
 table[row sep=crcr, y error plus index=2, y error minus index=3]{%
1.21	89.3086471557617	0.274709701538086	0.274709701538086\\
2.21	89.571662902832	0.213149428367615	0.213149428367615\\
3.21	89.5870361328125	0.357576668262482	0.357576668262482\\
4.21	89.7988739013672	0.23363932967186	0.23363932967186\\
5.21	89.7676315307617	0.0860228762030602	0.0860228762030602\\
6.21	89.7053451538086	0.195801243185997	0.195801243185997\\
7.21	89.4793014526367	0.21904593706131	0.21904593706131\\
8.21	89.4217300415039	0.177726224064827	0.177726224064827\\
9.21	89.4444351196289	0.507743060588837	0.507743060588837\\
10.21	0	0	0\\
};

\end{axis}
\end{tikzpicture}%
\input{Figures/singvals_kegg_undir_uci.tex}
%
%
\definecolor{mycolor1}{rgb}{1.00000,1.00000,0.00000}%
\definecolor{mycolor2}{rgb}{0.00000,1.00000,1.00000}%
\begin{tikzpicture}

\begin{axis}[%
width=0.951\fwidth,
height=0.75\fwidth,
at={(0\fwidth,0\fwidth)},
scale only axis,
xmin=0,
xmax=16,
ymin=-1.21306402434129e-06,
ymax=100,
axis background/.style={fill=white},
title style={font=\bfseries},
title={kegg\_undir\_uci},
axis x line*=bottom,
axis y line*=left,
legend style={at={(0.97,0.03)}, anchor=south east, legend cell align=left, align=left, draw=white!15!black}
]
\addplot [color=red]
 plot [error bars/.cd, y dir = both, y explicit]
 table[row sep=crcr, y error plus index=2, y error minus index=3]{%
0.86	3.80997380489134e-06	5.02303782923263e-06	5.02303782923263e-06\\
1.86	9.23410243558465e-06	6.258556368266e-06	6.258556368266e-06\\
2.86	1.31922761283931e-05	5.34570835952763e-06	5.34570835952763e-06\\
3.86	1.71877709362889e-05	4.81237339045038e-06	4.81237339045038e-06\\
4.86	2.13309249375015e-05	9.70732617133763e-06	9.70732617133763e-06\\
5.86	2.8715734515572e-05	1.09586853795918e-05	1.09586853795918e-05\\
6.86	2.78076677204808e-05	8.92918978934176e-06	8.92918978934176e-06\\
7.86	2.91930227831472e-05	1.14034955913667e-05	1.14034955913667e-05\\
8.86	3.25638502545189e-05	8.79898107086774e-06	8.79898107086774e-06\\
9.86	3.64444094884675e-05	1.04986665974138e-05	1.04986665974138e-05\\
10.86	3.57903409167193e-05	4.87496208734228e-06	4.87496208734228e-06\\
11.86	3.10045943479054e-05	2.16000262298621e-06	2.16000262298621e-06\\
12.86	3.44891814165749e-05	4.44319675807492e-06	4.44319675807492e-06\\
13.86	4.13101479352918e-05	1.57642011799908e-06	1.57642011799908e-06\\
14.86	4.33250243077055e-05	9.05851447896566e-06	9.05851447896566e-06\\
};

\addplot [color=mycolor1]
 plot [error bars/.cd, y dir = both, y explicit]
 table[row sep=crcr, y error plus index=2, y error minus index=3]{%
0.93	2.68240022659302	0.428594887256622	0.428594887256622\\
1.93	9.82885551452637	1.15725231170654	1.15725231170654\\
2.93	21.4236087799072	2.75950074195862	2.75950074195862\\
3.93	41.5295486450195	20.8764247894287	20.8764247894287\\
4.93	44.3186149597168	19.2391777038574	19.2391777038574\\
5.93	84.1846542358398	3.35659885406494	3.35659885406494\\
6.93	61.0652885437012	18.0480823516846	18.0480823516846\\
7.93	65.9743804931641	20.9858131408691	20.9858131408691\\
8.93	73.4394073486328	17.9488410949707	17.9488410949707\\
9.93	53.639232635498	19.5103759765625	19.5103759765625\\
10.93	36.0563011169434	4.82502937316895	4.82502937316895\\
11.93	54.340648651123	5.17525482177734	5.17525482177734\\
12.93	56.5232734680176	13.2808208465576	13.2808208465576\\
13.93	53.1350479125977	10.5260715484619	10.5260715484619\\
14.93	68.9146957397461	14.176833152771	14.176833152771\\
};

\addplot [color=green]
 plot [error bars/.cd, y dir = both, y explicit]
 table[row sep=crcr, y error plus index=2, y error minus index=3]{%
1	3.46845555305481	0.465590864419937	0.465590864419937\\
2	8.52420711517334	0.7235227227211	0.7235227227211\\
3	28.0813598632812	10.6801118850708	10.6801118850708\\
4	50.0366630554199	13.4903926849365	13.4903926849365\\
5	52.7099914550781	20.2581729888916	20.2581729888916\\
6	65.4633026123047	17.7689552307129	17.7689552307129\\
7	59.1166458129883	11.3995876312256	11.3995876312256\\
8	66.8602294921875	18.5657844543457	18.5657844543457\\
9	83.6110916137695	12.7138452529907	12.7138452529907\\
10	56.5157470703125	16.0346965789795	16.0346965789795\\
11	36.9627456665039	4.93602085113525	4.93602085113525\\
12	57.4296760559082	17.1767272949219	17.1767272949219\\
13	55.1896781921387	18.8203029632568	18.8203029632568\\
14	44.3585433959961	8.96016216278076	8.96016216278076\\
15	65.8652191162109	6.61202669143677	6.61202669143677\\
};

\addplot [color=mycolor2]
 plot [error bars/.cd, y dir = both, y explicit]
 table[row sep=crcr, y error plus index=2, y error minus index=3]{%
1.07	3.81540489196777	0.355655044317245	0.355655044317245\\
2.07	8.85348033905029	2.19184160232544	2.19184160232544\\
3.07	24.5256462097168	9.74860191345215	9.74860191345215\\
4.07	39.0293502807617	8.10015487670898	8.10015487670898\\
5.07	44.9717178344727	15.1814775466919	15.1814775466919\\
6.07	46.9867172241211	17.8246917724609	17.8246917724609\\
7.07	69.1805267333984	12.1631269454956	12.1631269454956\\
8.07	75.7536315917969	12.2668428421021	12.2668428421021\\
9.07	76.1378402709961	16.530969619751	16.530969619751\\
10.07	51.3991470336914	18.7178611755371	18.7178611755371\\
11.07	38.2515029907227	7.99645090103149	7.99645090103149\\
12.07	45.0165634155273	11.4596538543701	11.4596538543701\\
13.07	59.6386642456055	18.4623508453369	18.4623508453369\\
14.07	45.450065612793	9.52725028991699	9.52725028991699\\
15.07	61.7100639343262	17.2658767700195	17.2658767700195\\
};

\addplot [color=blue]
 plot [error bars/.cd, y dir = both, y explicit]
 table[row sep=crcr, y error plus index=2, y error minus index=3]{%
1.14	4.72839593887329	0.549733221530914	0.549733221530914\\
2.14	11.7067050933838	1.88411843776703	1.88411843776703\\
3.14	28.4827003479004	9.60907077789307	9.60907077789307\\
4.14	37.0527191162109	5.14223718643188	5.14223718643188\\
5.14	46.4264297485352	10.854549407959	10.854549407959\\
6.14	78.0285186767578	6.17288303375244	6.17288303375244\\
7.14	65.4087524414062	12.0846319198608	12.0846319198608\\
8.14	76.1768646240234	19.2973289489746	19.2973289489746\\
9.14	85.2950592041016	4.20898628234863	4.20898628234863\\
10.14	53.3351058959961	10.4520816802979	10.4520816802979\\
11.14	41.3694915771484	2.28042483329773	2.28042483329773\\
12.14	61.388256072998	13.8853435516357	13.8853435516357\\
13.14	64.2191467285156	17.4674186706543	17.4674186706543\\
14.14	44.9677696228027	6.57700729370117	6.57700729370117\\
15.14	70.3096466064453	15.6116561889648	15.6116561889648\\
};

\addplot [color=black]
 plot [error bars/.cd, y dir = both, y explicit]
 table[row sep=crcr, y error plus index=2, y error minus index=3]{%
1.21	87.3410797119141	0.094890721142292	0.094890721142292\\
2.21	88.9645767211914	0.695136785507202	0.695136785507202\\
3.21	87.962516784668	1.05625915527344	1.05625915527344\\
4.21	87.7980499267578	0.72417289018631	0.72417289018631\\
5.21	89.4856948852539	0.593363106250763	0.593363106250763\\
6.21	88.5842132568359	0.830021023750305	0.830021023750305\\
7.21	88.9318618774414	0.861953854560852	0.861953854560852\\
8.21	88.9153900146484	0.846308648586273	0.846308648586273\\
9.21	89.7990875244141	0.225253611803055	0.225253611803055\\
10.21	89.9863510131836	0.0190186444669962	0.0190186444669962\\
11.21	89.9847106933594	0.0198125615715981	0.0198125615715981\\
12.21	90	0	0\\
13.21	89.9960479736328	0.00791626237332821	0.00791626237332821\\
14.21	89.9944000244141	0.0111907962709665	0.0111907962709665\\
15.21	89.9960479736328	0.00791626237332821	0.00791626237332821\\
};

\end{axis}
\end{tikzpicture}%
\else
\fi
\caption{Analysis of the optimized matrices $\mA_{\boldsymbol{\theta}}$ for three datasets. 
Left: Visualization of the singular values of the matrix $\mA_{\boldsymbol{\theta}}$. 
Right: Visualization of the principal angle ($y$-axis) between subspaces spanned by the first $n$ right singular vectors ($x$-axis). 
We set the number of folds as $n_\text{fold} = 64$ as reference and compare to $n_\text{fold} \in \{64, 32, 16, 8, 4 \}$ and no optimization (i.e. the classical VKOGA).
Errorbars are used to visualize the mean and standard deviation for 5 reruns. 
}
\label{fig:stability_alignment}
\end{figure}

\section{Conclusion and outlook}
\label{sec:conclusion}

In this paper we introduced a machine learning way of choosing kernel hyperparameters, which is done by a stochastic gradient descent optimization.
For the optimization, we leveraged a recent efficient way to compute $k$-fold cross validation errors and compared their performances.
Especially those hyperparameter optimized kernels can be seen as a two-layered kernel machine, i.e.\ a deep kernel. 
The method was used in conjunction with greedy methods to select proper data points in order to obtain sparse models. 
Fundamental analysis on the proposed method was provided as well as experiments on synthetic and real world data, 
which highlight the benefits of the approach. 

Future work aims at combining the kernel optimization with the greedy selection procedure and generalizing the first layer map. Instead of using only linear kernels which give rise to linear mappings, the use of nonlinear kernels seems appealing.

\textbf{Acknowledgements:} The first author acknowledges the funding of the project by the Deutsche Forschungsgemeinschaft (DFG, German Research Foundation) under
Germany's Excellence Strategy - EXC 2075 - 390740016 and funding by the BMBF under contract 05M20VSA. 
The first author thanks Gabriele Santin and Bernard Haasdonk for discussions. 
The second author acknowledges the financial support of the Programma Operativo Nazionale (PON) "Ricerca e Innovazione" 2014 - 2020.
The research of the second and third authors has been accomplished within the Italian Network on Approximation (RITA), 
the thematic group on \lq\lq Approximation Theory and Applications\rq\rq \hskip 0.1cm (TAA) of the Italian Mathematical Union (UMI) and partially funded by the GNCS-IN$\delta$AM.

\bibliography{references}
\bibliographystyle{abbrv}

\end{document}